\numberwithin{equation}{section}
\numberwithin{figure}{section}
\theoremstyle{plain}
\newtheorem{thm}{\protect\theoremname}[section]
  \theoremstyle{definition}
  \theoremstyle{remark}
  \newtheorem{rem}[thm]{\protect\remarkname}
  \theoremstyle{plain}
  \newtheorem{cor}[thm]{\protect\corollaryname}
      \newtheorem*{fact*}{Fact}
  \theoremstyle{plain}
  \newtheorem{lem}[thm]{\protect\lemmaname}
  \theoremstyle{definition}
  \theoremstyle{plain}
  \theoremstyle{remark}
  \newtheorem*{rem*}{\protect\remarkname}
\let\reftagform@=\tagform@
\def\tagform@#1{\maketag@@@{(\ignorespaces\textcolor{purple}{#1}\unskip\@@italiccorr)}}
\renewcommand{\eqref}[1]{\textup{\reftagform@{\ref{#1}}}}
\DeclareUrlCommand\ULurl@@{%
  \def\UrlLeft{\uline\bgroup}%
  \def\UrlRight{\egroup}}
\def\ULurl@#1{\hyper@linkurl{\ULurl@@{#1}}{#1}}
\DeclareRobustCommand*\ULurl{\hyper@normalise\ULurl@}
\newcommand{\cP}{\mathcal{P}}
\newcommand{\cN}{\mathcal{N}}
\newcommand{\cE}{\mathcal{E}}
\newcommand{\R}{\mathbb{R}}
\newcommand{\N}{\mathbb{N}}
\newcommand{\prob}{\mathbb{P}}
\newcommand{\E}{\mathbb{E}}
\newcommand{\eps}{\epsilon}
\newcommand{\indicator}[1]{\mathbbm{1}\left\{{#1}\right\}}
\newcommand{\eqdist}{\stackrel{(d)}{=}}
\newcommand{\convdist}{\stackrel{(d)}{\longrightarrow}}
\newcommand{\tensor}{\otimes}
\newcommand{\abs}[1]{\left\lvert#1\right\rvert}
\newcommand{\norm}[1]{\lvert\lvert#1\rvert\rvert}
\newcommand{\gibbs}[1]{\left\langle #1\right\rangle}
  \providecommand{\corollaryname}{Corollary}
  \providecommand{\definitionname}{Definition}
  \providecommand{\examplename}{Example}
  \providecommand{\lemmaname}{Lemma}
  \providecommand{\propositionname}{Proposition}
  \providecommand{\remarkname}{Remark}
\providecommand{\theoremname}{Theorem}
\renewcommand{\limsup}{\varlimsup}
\begin{document}

\title{Thouless-Anderson-Palmer equations  for generic $p$-spin glasses}
%\runtitle{TAP for generic spin glasses}

\author{Antonio Auffinger \thanks{Department of Mathematics, Northwestern University, auffing@math.northwestern.edu} \\
\small{Northwestern University}\and Aukosh Jagannath \thanks{Department of Mathematics, Harvard University, aukosh@math.harvard.edu}\\
\small{Harvard University}}

\date{\today}
\maketitle
\begin{abstract}
We study the Thouless-Anderson-Palmer (TAP) equations for spin glasses on the hypercube.
First, using a random, approximately ultrametric decomposition of the hypercube, we decompose the
Gibbs measure, $\gibbs{\cdot}_N$, into a mixture of conditional laws, $\gibbs{\cdot}_{\alpha,N}$.
We show that the TAP equations hold for the spin at any site with respect to $\gibbs{\cdot}_{\alpha,N}$
simultaneously for all $\alpha$. This result holds for generic models
provided
that the Parisi measure of the model has a jump at the top of its support.  
\end{abstract}

\section{Introduction}

The Thouless-Anderson-Palmer (TAP) equations were introduced by Thouless,
Anderson, and Palmer \cite{TAP77} as the mean field equations for
the Sherrington-Kirkpatrick (SK) model of spin glasses. These equations
can be stated informally as follows. For each $\sigma\in\Sigma_{N}=\{-1,1\}^{N}$,
let 
\[
H_{N}(\sigma)=\frac{1}{\sqrt{N}}\sum_{i,j=1}^{N}g_{ij}\sigma_{i}\sigma_{j}
\]
be the Hamiltonian for the SK model. Here $g_{ij}$ are i.i.d. standard
Gaussian random variables for $1\leq i \leq j \leq N$ and $g_{ij}=g_{ji}$. Let 
\[
\mu_{N}(\{\sigma\})=\frac{e^{-\beta H_{N}(\sigma)+h\sum_{i=1}^{N}\sigma_{i}}}{Z_{N}}
\]
be the Gibbs measure of this system at inverse temperature, $\beta$,
and external field, $h$. Here $\beta\text{ and }h$ are non-negative
real numbers and $Z_{N}$ is chosen such that $\mu_{N}$ is a probability
measure on $\Sigma_{N}$. We denote integration of a quantity,
say $\sigma_{i}$, against $\mu_N$ as $\left\langle \sigma_{i}\right\rangle $. The
TAP equations state that in the limit that $N\to\infty$, we have
that 
\begin{equation}
\left\langle \sigma_{i}\right\rangle _{\alpha}\approx\tanh\left(h+\left\langle 
\frac{1}{\sqrt{N}}\beta\sum_{j}(g_{ij}+g_{ji})\sigma_{j}\right\rangle _{\alpha}-\beta^{2}(1-q_{*})\left\langle \sigma_{i}\right\rangle _{\alpha}\right),\label{eq:tap-informal}
\end{equation}
 for some $q_{*}\in[0,1]$ and for some random measure for which integration is denoted by $\langle \cdot \rangle_{\alpha}$. 

There have been two approaches to proving
the TAP equations rigorously. 
The first approach is to take $\left\langle \cdot\right\rangle _{\alpha}$
as integration with respect to the Gibbs measure. This has been done
by Talagrand \cite{TalBK03} and Chatterjee \cite{Chat10} at sufficiently
high temperature for the SK model where they establish \prettyref{eq:tap-informal}
under this interpretation. A second approach, introduced by Bolthausen
\cite{Bolt14}, is to interpret $\left\langle \sigma_{i}\right\rangle _{\alpha}$
as a vector in high dimensions, and to understand \prettyref{eq:tap-informal}
through a fixed point iteration scheme. There he showed that this
iteration converges to a unique solution of \prettyref{eq:tap-informal}
in the entire predicted high temperature regime. 
At low temperature, as far as we know,  there is no rigorous proof
 of \prettyref{eq:tap-informal}. 
 In this regime, it is expected that there are many distinct measures,
 $\mu_{\alpha,N}$, called ``pure states'', whose convex combination
 is $\mu_N$  and each of which satisfies \prettyref{eq:tap-informal}.
 
The first goal of this paper is to study \prettyref{eq:tap-informal} for  generic mixed $p$-spin glasses without an assumption on the
temperature. These models are defined as follows. Consider the mixed $p$-spin
glass Hamiltonian, $H_{N}(\sigma)$, which is the centered Gaussian
process on $\Sigma_{N}=\{-1,1\}^{N}$ with covariance 
\[
\E H_{N}(\sigma^{1})H_{N}(\sigma^{2})=N\xi(R_{12}),
\]
where $R_{12}=\frac{1}{N}\sum\sigma_{i}^{1}\sigma_{i}^{2}$ is called
the overlap and  $\xi(t)=\sum_{p\geq 2} \beta_{p} t^{p}$ is called the model. We let $\mu_{N}$ denote the corresponding Gibbs measure and $\langle \cdot \rangle$ expectation under products of $\mu_{N}$. 
The SK model corresponds to $\xi(t)=\beta_{2}t^{2}$. A mixed $p$-spin glass model is called \emph{generic} if the set $\{t^{p}:\beta_{p}>0\}$ is total in $(C([-1,1]),\sup\abs{\cdot})$. 

Denote by $\zeta_{N}$ the distribution of the overlap under the measure
 $\mathbb E \mu_{N}^{\otimes 2}$, that is,
 $$\zeta_{N}(A) = \mathbb E \langle \mathbf 1(R_{12} \in A) \rangle$$ 
 for any measurable $A\subset [-1,1]$. It is known that $\zeta_{N}$ converges to $\zeta$, where $\zeta$
is the unique minimizer of the Parisi formula \cite{AuffChenSC15,PanchSKBook}. It is also known that generic models  satisfy the Ghirlanda-Guerra identities in the limit \cite{PanchGhir10,PanchSKBook}. As a result, their asymptotic Gibbs measures \cite{ArgAiz09} are known to have ultrametric support by Panchenko's ultrametricity theorem \cite{PanchUlt13}. We assume that $\zeta$ has a jump at the top of its support. That is, if $q_{*}:=\sup \text{supp} (\zeta)$, we assume that
\begin{equation}\label{eq:assumptiontop}
\zeta(\{q_{*}\})>0.
\end{equation} 
This assumption is expected to hold in a wide range of models at all temperatures. For more on this see \prettyref{rem:jump}.

This  ultrametric structure is the starting point for our study of the analogue of \prettyref{eq:tap-informal} for generic models.
It was shown in \cite{Jag14} that, as a consequence of Panchenko's ultrametricity theorem, $\Sigma_N$ can be 
decomposed as the disjoint union of a collection of clusters, $\{C_{\alpha,N}\}_{\alpha\in\N}$, which satisfy
certain ultrametric-type properties. Heuristically, these clusters are essentially balls of radius $q_*$. 
Within a cluster, the points are at overlap roughly $q_*$,  between clusters
the points have overlap less than $q_*-o_N(1)$ with high $\mu_N$ probability.
We recall the precise definition of these sets in \prettyref{app:AU}. 
A similar decomposition was obtained by Talagrand in \cite{Tal09}.
 
 For each of these clusters, $C_{\alpha,N}$, we define 
\begin{equation}\label{eq:1}
\mu_{\alpha,N}(\cdot):=\mu_{N}(\cdot|C_{\alpha,N}).
\end{equation}
That is, $\mu_{\alpha,N}$ is the  Gibbs measure conditioned on the set $C_{\alpha,N}$, with the convention that if $C_{\alpha,N}=\emptyset$, then $\mu_{\alpha,N}=\delta_{(1,\ldots,1)}$.
This yields a decomposition of the Gibbs measure $\mu_{N}$ as
\begin{equation}
\mu_{N}(\cdot) = \sum_{\alpha} \mu_{\alpha,N}(\cdot) \mu_{N}(C_{\alpha,N})+o_N(1).
\end{equation} 
Here, $o_{N}(1)$ means that $\mu_{N}((\cup_{\alpha}C_{\alpha,N})^{c})$ goes to zero in probability as $N$ goes to infinity. The sets $C_{\alpha,N}$ are also ordered with respect to their Gibbs masses, that is,
\[
\mu_{N}(C_{1,N}) \geq  \mu_{N}(C_{2,N}) \geq \mu_{N}(C_{3,N}) \geq \ldots
\]
Integration with respect to the conditional measure $\mu_{\alpha,N}$ will be denoted by $\langle \cdot \rangle_{\alpha,N}$. 

We now state our main theorem, which is the equivalent of \eqref{eq:tap-informal}
for generic models.
For $\sigma \in \Sigma_{N}$, let 
\begin{equation}\label{spinlocalfield}
\quad y_{N}(\sigma)= \sum_{p\geq 2} \frac{\beta_{p}}{N^{\frac{p-1}{2}}} \sum_{2\leq i_{2},\ldots,i_{p}\leq N} J_{i_{2}\cdots i_{p}}\sigma_{i_{2}}\cdots\sigma_{i_{p}},
\end{equation}
with $J_{i_{2}\ldots i_{p}}= g_{1i_{2}\ldots i_{p}}+g_{i_{2}1\ldots i_{p}} + \ldots + g_{i_{2}\ldots i_{p}1}$, where $g_{i_{1}i_{2}\ldots i_{p}}$, $1\leq i_{1},\ldots,i_{p}\leq N$ are i.i.d. standard Gaussian random variables. 
We call $\sigma_1$ the spin of the first particle and $y_{N}$ the local field on the first particle.
Note that $y_{N}$ is a centered Gaussian process on $\Sigma_{N-1}$ with covariance given by 
\begin{equation}\label{eq:yn-def}
\mathbb E y_{N}(\sigma^{1})y_{N}(\sigma^{2}) = \xi'(N^{-1} (\sigma^{1},\sigma^{2})).
\end{equation} 
For more on $y_{N}$ see \prettyref{lem:decomposition-lemma}. We also note here that the choice of the 
first spin as opposed to any fixed $i$ will be irrelevant by site symmetry.

Our main result is that the TAP equation for a spin holds for the measures $\langle \cdot \rangle_{\alpha,N}$. 

\begin{thm}
\label{thm:TAP-finite-N} Assume that $\zeta(q_{*})>0$. We have that 

\begin{equation}\label{eq:TAPmixed}
\left( \langle \sigma_1  \rangle_{\alpha,N}-\tanh\left[\left\langle y_{N}\right\rangle _{\alpha,N}+h-(\xi'(1)-\xi'(q_{*})) \langle \sigma_1 \rangle_{\alpha,N}\right] \right)_{\alpha\in \N}\to0
\end{equation}
in distribution.
\end{thm}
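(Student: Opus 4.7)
The plan is to derive \eqref{eq:TAPmixed} via a cavity decomposition at site $1$ followed by a Gaussian integration within each cluster. First, decompose
\[
H_N(\sigma) = \sigma_1\, y_N(\sigma) + H_N^{(\setminus 1)}(\sigma) + O(N^{-1/2}),
\]
where $H_N^{(\setminus 1)}$ collects the terms independent of $\sigma_1$; the error absorbs the higher odd powers $\sigma_1^{2k+1}$, $k\geq 1$, arising in $p$-spin interactions with $p\geq 3$, which carry a smaller normalization than the linear contribution. Because flipping $\sigma_1$ perturbs any overlap by $2/N$, the indicator $\mathbf 1_{C_{\alpha,N}}(\sigma_1,\sigma_{\geq 2})$ is insensitive to $\sigma_1$ outside a boundary layer of Gibbs mass $o(1)$, using the explicit description of $C_{\alpha,N}$ recalled in \prettyref{app:AU}. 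Summing over $\sigma_1=\pm 1$ in numerator and denominator of both $\langle\sigma_1\rangle_{\alpha,N}$ and $\langle y_N\rangle_{\alpha,N}$ therefore produces the cavity identities
\[
\langle\sigma_1\rangle_{\alpha,N}=\langle\tanh(y_N+h)\rangle_{\nu_\alpha}+o(1),\qquad \langle y_N\rangle_{\alpha,N}=\langle y_N\rangle_{\nu_\alpha}+o(1),
\]
where $\nu_\alpha$ is the $\cosh(y_N+h)$-tilt of the $\sigma_{\geq 2}$-marginal of $\mu_{\alpha,N}$.

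Next, I would exploit the Gaussian structure of $y_N$ restricted to the cluster. By \eqref{eq:yn-def} and the fact that overlaps on the last $N-1$ coordinates agree with the full overlap up to $O(1/N)$, two independent replicas drawn from $\nu_\alpha^{\otimes 2}$ have overlap concentrating at $q_*$; this is where $\zeta(\{q_*\})>0$, ultrametricity, and the Ghirlanda-Guerra identities enter. Consequently the covariance of $y_N$ under $\nu_\alpha^{\otimes 2}$ is $\xi'(1)$ on the diagonal and $\xi'(q_*)+o(1)$ off it. Setting $v:=\xi'(1)-\xi'(q_*)$, this yields an approximate orthogonal decomposition $y_N(\sigma) = c_\alpha + \eta(\sigma)$ in which $c_\alpha$ is a centered Gaussian of variance $\xi'(q_*)$ common to the cluster and $\eta(\sigma)$ is a mean-zero variance-$v$ residual, effectively independent across $\sigma$. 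The Gaussian identities for $\eta\sim N(0,v)$, namely $\E\cosh(a+\eta)=e^{v/2}\cosh(a)$, $\E\sinh(a+\eta)=e^{v/2}\sinh(a)$, and $\E[\eta\cosh(a+\eta)]=v\,\E[\sinh(a+\eta)]$, applied with $a=c_\alpha+h$, then give
\[
\langle\tanh(y_N+h)\rangle_{\nu_\alpha}\longrightarrow\tanh(c_\alpha+h),\qquad \langle y_N\rangle_{\nu_\alpha}\longrightarrow c_\alpha+v\,\tanh(c_\alpha+h).
\]
Combining with the cavity identities produces $\langle\sigma_1\rangle_{\alpha,N}\to\tanh(c_\alpha+h)$ and $\langle y_N\rangle_{\alpha,N}\to c_\alpha+v\langle\sigma_1\rangle_{\alpha,N}$, so eliminating $c_\alpha$ recovers exactly \eqref{eq:TAPmixed}.

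The principal obstacle is the second step: making rigorous the decomposition $y_N=c_\alpha+\eta$ with an effectively i.i.d.\ residual. Since $\nu_\alpha$ is random and coupled to the very Gaussians defining $y_N$, the content of the claim is really about the disorder-averaged joint law of $(y_N(\sigma^1),y_N(\sigma^2))$ with $\sigma^1,\sigma^2\sim\nu_\alpha$ independently, rather than the pointwise Gaussian structure of $y_N$. Controlling this should combine: (i) concentration of the pair overlap at $q_*$ inside each cluster, via ultrametricity and $\zeta(\{q_*\})>0$; (ii) Gaussian concentration of $y_N$ on overlap slices, plausibly via Gaussian interpolation and the stability of the Gibbs measure under the small perturbations available for generic models; and (iii) a truncation argument retaining only the top $K$ clusters before sending $N\to\infty$ and then $K\to\infty$, in order to secure the joint in-distribution convergence over $\alpha\in\N$.
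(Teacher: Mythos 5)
Your overall plan has the right shape---cavity decomposition of the Hamiltonian, approximate replica symmetry inside each cluster, and Gaussian integration to produce the $\tanh$ and Onsager term (your Gaussian identities, e.g.\ $\E[\eta\cosh(a+\eta)]=v\,\E[\sinh(a+\eta)]$, are correct)---and this matches the broad strokes of Section~\ref{sec:cavity}. But there is a concrete gap at exactly the point you dismiss in one line: the claim that ``the indicator $\mathbf 1_{C_{\alpha,N}}(\sigma_1,\sigma_{\ge 2})$ is insensitive to $\sigma_1$ outside a boundary layer of Gibbs mass $o(1)$.'' The sets $C_{\alpha,N}$ are \emph{not} given by an explicit overlap threshold on which an $O(1/N)$ perturbation produces an $o(1)$ mass boundary; they are random sets built from the full measure $\mu_N$ by an iterated equivalence-class construction (\prettyref{thm:pure-state-AU}), and $\mu_N$ itself depends on the disorder $y_N$. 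There is no a priori reason that these sets are close to product sets $\Sigma_1\times A$ in $\mu_N$-measure, and even if one believed it heuristically, one cannot appeal to any ``explicit description.'' This is precisely the main technical content of the paper: Sections~\ref{sec:stability} and \ref{sec:Nesting} construct a competing family $W_{\alpha,N}=\Sigma_1\times\tilde W_{\alpha,N-1}$ (reordered by $\mu_N$-mass via a random permutation $\pi_N$) directly from the cavity measure $\mu_N'$, which is independent of $y_N$ by construction, and then prove an \emph{essential uniqueness theorem} (Theorem~\ref{thm:nesting} and Corollary~\ref{cor:quasi}) showing $\mu_N(C_\alpha\Delta W_\alpha)\to0$. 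Your proposal never establishes anything of this kind, and your ``principal obstacle'' paragraph instead flags the coupling between $\nu_\alpha$ and $y_N$---which is genuinely the \emph{wrong} obstruction to worry about once you work with $W_\alpha$, since the cavity measure and its clusters are then independent of $y_N$ by construction, and the statistics reduce to continuous functionals of the overlap array under $\nu_{\alpha,N}$ (Lemma~\ref{lem:conv-of-cosh-moments}).

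A second issue you do not address is the reordering. Lifting and tilting changes the ranking of cluster masses, so the cluster that is ``$\alpha$-th largest'' for $\mu_N$ corresponds to the $\pi_N(\alpha)$-th cavity cluster for some random permutation $\pi_N$. To get joint distributional convergence over $\alpha\in\N$ one must show $\pi_N$ is tight in the sense of \prettyref{eq:tightness} (Lemma~\ref{lem:tightness}); this uses the Tilting Lemma (Lemma~\ref{lem:TL}) and Poisson--Dirichlet properties, and then Lemma~\ref{lem:rearrangement-of-sequences} transfers the convergence through the reshuffling. Your proposed truncation-to-top-$K$-clusters argument gestures at this but does not supply the tightness statement on which it rests. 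In short: the cavity and Gaussian computation in your second paragraph is essentially the paper's Section~\ref{sec:cavity} (Theorem~\ref{thm:conv-spin-laws}, Corollary~\ref{cor:Cavity-TAP-finite-N}), but the transfer from a cavity coordinate to the fixed coordinate $\sigma_1$ conditioned on $C_{\alpha,N}$ is where your proposal fails, and it is here that the paper does the bulk of the work.
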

The proof of Theorem \ref{thm:TAP-finite-N} has several steps and along the way we pick up results that 
are of independent interest. We will outline the proof of Theorem \ref{thm:TAP-finite-N} in the next section. 
We conclude this section with the following remarks.

\begin{rem}
At high temperature and with $h=0$,  the Parisi measure $\zeta = \delta_{0}$, and the decomposition $C_{\alpha,N}$ is given by $C_{1,N}=\Sigma_{N}$, $C_{\alpha,N}=\emptyset$, $\alpha>1$. The conditional measure $\mu_{1,N}$ is now identical to the Gibbs measure $\mu_{N}$ and one recovers the result of Talagrand \cite{TalBK03} for  a single spin. 
\end{rem}

\begin{rem}Theorem \ref{thm:TAP-finite-N} establishes the TAP equations for a single spin. The TAP equations are also predicted to hold for all spins $\sigma_{1}, \ldots, \sigma_{N}$ simultaneously.  
\end{rem}

\begin{rem}\label{rem:jump}
The assumption that the Parisi measure has a jump at the top of its support, $\zeta(q_{*})>0$, is believed to be true for a large collection of (if not all) generic models at all temperatures. Results in this direction were obtained by Auffinger-Chen (see Theorem 4 in \cite{AuffChenPM15}). If there is no jump at the top of the support, then it is unclear the extent to which
a true pure state decomposition will hold in such systems \cite{PanchHEpure15}. 
In a follow up paper \cite{AA2018}, we will show that at infinite particle number, \prettyref{eq:tap-informal}
holds without this assumption. In fact, we will show a multiscale generalization of these equations.
\end{rem}

\begin{rem}
Since the statement of Theorem \ref{thm:TAP-finite-N} depends on the construction of the measures $\langle \cdot \rangle_{\alpha,N}$, one may wonder what would happen if one takes a different decomposition. In Section \ref{sec:Nesting} we show that the decomposition \eqref{eq:1} is essentially unique in the following sense. Any other collection of subsets $X_{\alpha,N}$ that satisfy
the same properties as $C_{\alpha,N}$ must also satisfy $\mu_{N} (X_{\alpha,N} \Delta C_{\alpha,N}) \to 0$.  
\end{rem}

\subsection{Outline of the proof of Theorem \ref{thm:TAP-finite-N}}\label{sec:outline}
\prettyref{thm:TAP-finite-N} relates the quantities
\[
\gibbs{\sigma_1}_{\alpha,N} =\frac{1}{\mu_N( C_{\alpha,N})} \int_{C_{\alpha,N}} \sigma_1 d\mu_N 
\qquad \text{and}\qquad
\gibbs{y_N}_{\alpha,N}=\frac{1}{\mu_N(C_{\alpha,N})}\int_{C_{\alpha,N}} y_N(\sigma)d\mu_N.
\]
Put differently, we are interested in the relation between $\sigma_1$ and $y$ within a cluster, $C_\alpha$.
Heuristically, for large $N$ there is little difference between a fixed coordinate and a ``cavity coordinate''.
By a cavity coordinate, we mean that we study the law of $(s_{\alpha,N},y_{\alpha,N})$
which are distributed like $(\epsilon,y_{N}(\sigma))$ drawn from the tilted measure on $\Sigma_{N+1}$,
\[
d\mu_{N}^{\intercal}(\eps,\sigma)=\frac{e^{\epsilon y_{N}(\sigma)}d\eps\,d\mu_{N}(\sigma)}{\int2\cosh(y_{N}(\sigma))d\mu_{N}}
\]
conditioned on the event $\{\sigma\in C_{\alpha,N}\}$.  Call this conditional
measure $\mu_{\alpha,N}^{\intercal}$. 
Here, we assume that $y_{N}$ is independent
of $\mu_{N}$ and satisfies 
\[
\E y_{N}(\sigma^{1})y_{N}(\sigma^{2})=\xi'(R_{12})+o_{N}(1).
\]
As a result, to study convergence of $(s_{\alpha,N},y_{\alpha,N})$ for a fixed $\alpha$, it suffices
to study convergence of statistics of the form 
\[
\E\prod_{i}\int\phi_{i}(\epsilon,y_{N})d\mu_{\alpha,N}^{\intercal}
\]
for any finite family of reasonable $\phi_{i}$.
These statistics, as we will find, are continuous functionals of the
law of the overlap array of i.i.d. draws from $\mu_{\alpha,N}$.
The $\mu_{\alpha,N}$ are asymptotically
\emph{replica symmetric}, that is, their overlap array converges
to the matrix which is 1 on the diagonal and $q_{*}=\sup supp\{\zeta\}$
on the off diagonal. This implies that the law of $(s_{\alpha,N},y_{\alpha,N})$
converges to the law of a stochastic process, $(s,y)$,
which can be described as follows: let $h_\alpha$ be a centered gaussian with 
variance $\xi'(q_*)$. Then $(s,y)$ are the random variables with conditional density
\begin{equation}\label{eq:density-of-sy}
p(s,y;h_\alpha)\propto e^{-\frac{(y-h_\alpha)^2}{2(\xi'(1)-\xi'(q_*))}}e^{sy},
\end{equation}
with respect to the product of the counting measure on $\Sigma_1$ and Lebesgue measure on $\R$. 
It is an elementary calculation to show that this satisfies the TAP equation,
\begin{equation}\label{eq:tap-pure-state}
\gibbs{s}_\alpha = \tanh(\gibbs{y}_\alpha - (\xi'(1)-\xi'(q_*))\gibbs{s}_\alpha),
\end{equation}
conditionally on $h_\alpha$.
Indeed, once making this reduction, this is similar in spirit to the high temperature setting as in \cite{Chat10}. 
%\color{red}
(This is stated and proved in a slightly more general setting in \cite{Chat10}.) 
This step is shown in \prettyref{sec:cavity}.
%\color{black}

The final question is then: ``to what extent can we treat a fixed coordinate as a cavity coordinate?''. 
The answer comes by first showing that the collection $C_{\alpha,N} \times \{\pm1\}$ preserves most of the ultrametric properties after a (random) reshuffling. This is done in Sections \ref{sec:stability} and \ref{sec:Nesting}. We then use the replica symmetric structure of the conditional measures to deal with the dependence of $y_{N}$ on both the clusters and the Gibbs measure. This ends the proof of the theorem in Section \ref{sec:tap-fixed-coordinate}.  

\subsection*{Acknowledgements}

We thank an anonymous referee for a careful reading of this manuscript which led to numerous
helpful comments and suggestions which greatly improved the presentation
of this paper. 
The authors thank Dmitry Panchenko for numerous comments, suggestions and several discussions on a first version of this project, which dramatically improved the results of this paper. 
We also thank G\'erard Ben Arous and Ian Tobasco for several fruitful discussions. 
A.A. would also thank Louis-Pierre Arguin, Wei-Kuo Chen and
Nicola Kistler for helpful and broad discussions about TAP. A.J. thanks
the Northwestern University for their hospitality. This research was
conducted while A.A. was supported by NSF DMS-1597864 and A.J. was
supported by NSF OISE-1604232. 

\section{Convergence of Spins and Local fields for a Cavity Coordinate}
\label{sec:cavity}

In this section, we study the joint law of a spin and the local field on that spin 
for a cavity coordinate. 
As a consequence of this, we  find that \eqref{eq:TAPmixed}  holds for a cavity coordinate.

\textbf{Note:} In the remainder of this paper we take $h=0$. This does not change the arguments,
however it simplifies the notation.

Let $(H'(\sigma))$ be a centered Gaussian process on $\Sigma_{N}$ with covariance
\begin{equation}
\E H'(\sigma^{1})\cdot H'(\sigma^{2})=N\xi(R_{12})+o_{N}(1)\label{eq:ham-for-spin-section}
\end{equation}
where by the term $o_{N}(1)$, we mean a function of the overlap that vanishes uniformly
as $N$ tends to infinity. Let $\nu_N$ denote the Gibbs measure
on $\Sigma_{N}$ corresponding to $H'$.  Let $(y(\sigma))$ be a centered
Gaussian process on $\Sigma_N$ that is independent of $H'$ and satisfies
\begin{equation}\label{eq:y-defn-indept}
 \E y(\sigma^1)y(\sigma^2) = \xi'(R_{12})+o_N(1) 
\end{equation}
where again the $o_N(1)$ term is a function of the overlap.

Corresponding to $y$, we define a random tilt of $\nu_N$, which we denote by 
$\nu_N^{\intercal}$, as  the measure 
\begin{equation}\label{eq:nu-tilt}
\nu_N^\intercal = T(\sigma)d\nu_N
\end{equation}
where $T$ is given by
\begin{equation}\label{eq:rad-nyk}
T(\sigma)=\exp\left( \log( \cosh(y(\sigma))) - \log (\int_{\Sigma_{N}}  \cosh(y(\sigma))d\nu_N)\right).
\end{equation}
Observe that since $\cosh(x)\geq 1$, these measures are mutually absolutely continuous.

Assume that for $H'$ , the limiting overlap distribution satisfies $\zeta(q_{*})>0$. 
As $\xi$ is generic, there is a collection of sets, $\{X_{\alpha,N}\}\subset\Sigma_{N}$,
that satisfies items 1.-5. of Theorem~\ref{thm:pure-state-AU}, with respect to the measure
$\nu_N$.  We drop the $N$ dependence in the notation of $X_{\alpha,N}$ and write $X_{\alpha}$. For each $\alpha\in \mathbb N$, we define the measure
\[
\nu_{\alpha,N} = \nu_N(\cdot\vert X_\alpha),
\]
when $X_\alpha$ is non-null, and on the event that it is null, let this be $\delta_{(1,\ldots,1)}$. 
Finally we let $\nu_{\alpha,N}^\intercal $ be the measure on $\{-1,1\}\times\Sigma_N$ such that for $\phi$ 
continuous and bounded,
\begin{equation}\label{eq:nu-alpha-T}
\int \phi(s,\sigma) d\nu^\intercal_{\alpha,N} 
= \frac{\int_{X_\alpha}\int_{\Sigma_1} \phi(s,\sigma) e^{sy(\sigma)}ds d\nu_N(\sigma)}{\int_{X_\alpha} 2\cosh(y(\sigma))d\nu_N}.
\end{equation}
For the purposes of this section, let $\left\langle \cdot\right\rangle _{\alpha,N}$
denote integration with respect to $\nu_{\alpha,N}$, and $\gibbs{\cdot}_{\alpha,N}^\intercal$ to 
denote integration with respect to $\nu_{\alpha,N}^\intercal$.

Let $((s^i_{\alpha,N},\sigma^i_{\alpha,N}))_{i\geq 1}$ then be i.i.d. draws from $\nu^\intercal_{\alpha,N}$,
and let $y_{\alpha,N}^i = y(\sigma^i_{\alpha,N})$. 
The goal of this section is to study the convergence of the joint
law of $((s^i_{\alpha,N},y^i_{\alpha,N}))_{i\geq 1}$. In particular, let $h_\alpha\sim\cN(0,\xi'(q_*))$, and let 
$\nu_\alpha$ denote the measure on $\{\pm1\}\times\R$ with density, $p(s,y;h_\alpha)$,
from  \eqref{eq:density-of-sy}. Finally, let $((s^i,y^i))_i$ be i.i.d. draws from $\nu_\alpha$.
The main theorem of this section is the following.

\begin{thm}\label{thm:conv-spin-laws}
Assume that for $H'$, the limiting overlap distribution satisfies $\zeta(q_{*})>0$. For
each $\alpha\in\N$, 
\[
\left((s^i_{\alpha,N},y^i_{\alpha,N})\right)_{i}\to((s^i,y^i))_i
\]
in distribution.
\end{thm}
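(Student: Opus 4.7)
The plan is to show convergence in distribution by establishing convergence of the moments
\[
\E\prod_{i=1}^n \phi_i(s^i_{\alpha,N}, y^i_{\alpha,N}) \longrightarrow \E\prod_{i=1}^n \phi_i(s^i, y^i)
\]
for every $n \geq 1$ and all bounded continuous $\phi_1,\dots,\phi_n : \{\pm 1\} \times \R \to \R$; by a standard density argument this suffices. Setting $F_\phi(y):=\sum_{s\in\{\pm 1\}}\phi(s,y)e^{sy}$ and using \eqref{eq:nu-alpha-T} together with cancellation of $\nu_N(X_\alpha)^n$, the left-hand side becomes
\[
\E\!\left[\frac{\int \prod_{i=1}^n F_{\phi_i}(y(\sigma^i))\,d\nu_{\alpha,N}^{\otimes n}(\sigma)}{\left(\int 2\cosh(y(\sigma))\,d\nu_{\alpha,N}(\sigma)\right)^n}\right].
\]
A crucial a priori fact is that this ratio is the Gibbs expectation $\int \prod_i \phi_i\, d(\nu^\intercal_{\alpha,N})^{\otimes n}$ under a probability measure, so it always lies in $[-\prod_i\|\phi_i\|_\infty,\prod_i\|\phi_i\|_\infty]$.

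The argument then rests on two structural inputs. First, by the defining properties of the cluster $X_\alpha$ recorded in \prettyref{thm:pure-state-AU} together with $\zeta(q_*)>0$, the overlap matrix $(R(\sigma^i,\sigma^j))_{i,j}$ of $n$ i.i.d.\ samples from $\nu_{\alpha,N}$ concentrates in probability on the replica-symmetric matrix $R^*_{ij}:=\mathbf 1(i=j)+q_*\mathbf 1(i\neq j)$. Second, since $y$ is independent of $H'$ with covariance $\xi'(R_{12})+o_N(1)$, conditional on $(\sigma^i)$ the vector $(y(\sigma^i))_i$ is Gaussian with covariance $\xi'(R_{ij})+o_N(1)$; continuity of the Gaussian law in its covariance, combined with the overlap concentration, forces convergence to the Gaussian with $\xi'(1)$ on the diagonal and $\xi'(q_*)$ off diagonal, which admits the orthogonal decomposition
\[
y(\sigma^i)\eqdist h_\alpha+z_i,\qquad h_\alpha\sim\cN(0,\xi'(q_*)),\quad z_i\sim\cN(0,\xi'(1)-\xi'(q_*))\ \text{i.i.d.\ independent of }h_\alpha.
\]
The $o_N(1)$ term in \eqref{eq:y-defn-indept} is absorbed by uniform continuity of the Gaussian law in the covariance.

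To pass to the limit in the ratio, I would expand the denominator as an $n$-fold Gibbs integral against $\nu_{\alpha,N}^{\otimes n}$ and compute joint moments $\E[\mathrm{Num}^a\,\mathrm{Denom}^b]$. Each such moment is the expectation of a bounded (in $y$) product against $n(a+b)$ replicas drawn from $\nu_{\alpha,N}$; the shared $q_*$-overlap structure propagates the same common field $h_\alpha$ across all of them, and a direct calculation gives joint convergence in distribution of $(\mathrm{Num},\mathrm{Denom})$ to the random pair $\bigl(\prod_i \E_{z_i}F_{\phi_i}(h_\alpha+z_i),\,(\E_z 2\cosh(h_\alpha+z))^n\bigr)$. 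Since the ratio is a priori bounded, the continuous mapping theorem together with bounded convergence yields
\[
\E\prod_{i=1}^n\phi_i(s^i_{\alpha,N},y^i_{\alpha,N})\longrightarrow \E_{h_\alpha}\prod_{i=1}^n \frac{\int F_{\phi_i}(y)\,e^{-(y-h_\alpha)^2/(2(\xi'(1)-\xi'(q_*)))}\,dy}{\int 2\cosh(y)\,e^{-(y-h_\alpha)^2/(2(\xi'(1)-\xi'(q_*)))}\,dy},
\]
which by \eqref{eq:density-of-sy} equals $\E\prod_i\phi_i(s^i,y^i)$. The principal obstacle is the ratio structure: marginal limits of the numerator and the denominator are not sufficient, and one must track their joint behavior, which is precisely where the shared cluster-level field $h_\alpha$ plays an essential role; the replica computation above makes this joint convergence bookkeeping-heavy but conceptually routine once both overlap concentration and the conditional Gaussian structure are in hand.
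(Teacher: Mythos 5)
Your overall architecture is genuinely close to the paper's: reduce to mixed-moment statistics of the tilted measure, exploit the cluster-level overlap concentration from \prettyref{thm:conv-ovlp}, use that $y$ is independent of $\nu_N$ so that conditional on the replicas the field is Gaussian with covariance $\xi'(R_{ij})+o_N(1)$, and handle the ratio $\mathrm{Num}/\mathrm{Denom}$ carefully rather than separately. Your reduction via $F_\phi(y)=\sum_s\phi(s,y)e^{sy}$ is equivalent to the paper's absorption of the spin variable into $\tanh$ or $1$. The place where the two arguments diverge, and where yours has a genuine gap, is the treatment of the ratio.

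You propose establishing joint convergence in distribution of $(\mathrm{Num},\mathrm{Denom})$ by computing joint moments $\E[\mathrm{Num}^a\,\mathrm{Denom}^b]$. But moment convergence only implies distributional convergence when the limit is moment-determinate. Here $\mathrm{Denom}$ is a Gibbs average of $\cosh(y)$, so its limit is proportional to $\cosh(h_\alpha)^n$ with $h_\alpha$ Gaussian; the $k$-th moments grow like $\exp(ck^2)$, Carleman's condition fails, and the law is \emph{not} determined by its moments. Consequently the step ``a direct calculation gives joint convergence in distribution of $(\mathrm{Num},\mathrm{Denom})$'' is not justified as stated, and the continuous-mapping plus bounded-convergence endgame cannot be launched from it. The a priori boundedness of the ratio (which you correctly observe) helps once distributional convergence of the ratio is known, but it does not supply that convergence.

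The paper avoids this pitfall entirely. Writing $Z_\alpha=\fint_{X_\alpha}\cosh(y)\,d\nu_N\geq 1$ and using the tail bound $P(Z_\alpha\geq L)\leq C/L$ from \prettyref{lem:(Localization)}, it uniformly approximates $1/Z_\alpha^n$ by polynomials in $Z_\alpha$ on a set of overwhelming probability. After that substitution, every term is exactly of the form handled by \prettyref{lem:conv-of-cosh-moments}: an expectation of a bounded continuous functional of a \emph{finite-dimensional} overlap array, which converges by \prettyref{thm:conv-ovlp}. This sidesteps any question of moment determinacy. To salvage your route you would effectively have to perform the same truncation/approximation of $1/Z_\alpha^n$, at which point it collapses to the paper's proof.
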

Recall now that $(s^{i},y^{i})$  satisfies  \eqref{eq:tap-pure-state}. 
As a consequence, we have the
following corollary.
\begin{cor}\label{cor:Cavity-TAP-finite-N}
In the setting of Theorem \ref{thm:conv-spin-laws}, we have that 
\[
\left(\left\langle s\right\rangle^\intercal _{\alpha,N}-\tanh\left(\left\langle y\right\rangle^\intercal _{\alpha,N}-\left(\xi'(1)-\xi'(q_{*})\right)\gibbs{s}^\intercal_{\alpha,N}\right)\right)_{\alpha\in \mathbb N}\to0
\]
in distribution.
\end{cor}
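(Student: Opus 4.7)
The plan is to combine the explicit form of the TAP equation for the limit measure $\nu_\alpha$ with the convergence in Theorem~\ref{thm:conv-spin-laws}, and then to transfer convergence of the replicas to convergence of the Gibbs averages. Writing $\Delta := \xi'(1)-\xi'(q_*)$, I would first verify directly from \eqref{eq:density-of-sy} that $\nu_\alpha$ satisfies the pointwise TAP relation \eqref{eq:tap-pure-state} conditionally on $h_\alpha$: integrating $y$ out of $p(s,y;h_\alpha)$ gives $P_{\nu_\alpha}(s=\pm 1\mid h_\alpha)\propto e^{\pm h_\alpha}$, so $E_{\nu_\alpha}[s\mid h_\alpha]=\tanh(h_\alpha)$; completing the square in $y$ shows $y\mid s,h_\alpha\sim\mathcal{N}(h_\alpha+s\Delta,\,\Delta)$, and hence $E_{\nu_\alpha}[y\mid h_\alpha]=h_\alpha+\Delta\,E_{\nu_\alpha}[s\mid h_\alpha]$. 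Substituting, the desired identity $E_{\nu_\alpha}[s\mid h_\alpha]=\tanh\bigl(E_{\nu_\alpha}[y\mid h_\alpha]-\Delta\,E_{\nu_\alpha}[s\mid h_\alpha]\bigr)$ collapses to $\tanh(h_\alpha)=\tanh(h_\alpha)$.

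Next I would transfer convergence of the replicas to the Gibbs averages. Given the disorder, $(s^i_{\alpha,N},y^i_{\alpha,N})_i$ is i.i.d.\ under $\nu^\intercal_{\alpha,N}$, so the quenched strong law of large numbers yields $\tfrac{1}{n}\sum_{i=1}^n s^i_{\alpha,N}\to\gibbs{s}^\intercal_{\alpha,N}$ and $\tfrac{1}{n}\sum_{i=1}^n y^i_{\alpha,N}\to\gibbs{y}^\intercal_{\alpha,N}$ as $n\to\infty$, and analogously for $(s^i,y^i)_i$, whose directing measure is $\nu_\alpha$ given $h_\alpha$. Applying Theorem~\ref{thm:conv-spin-laws} to the first $n$ coordinates and then letting $n\to\infty$ via a diagonal argument upgrades this to
\[
\bigl(\gibbs{s}^\intercal_{\alpha,N},\,\gibbs{y}^\intercal_{\alpha,N}\bigr)_{\alpha\in\mathbb{N}}\convdist\bigl(E_{\nu_\alpha}[s\mid h_\alpha],\,E_{\nu_\alpha}[y\mid h_\alpha]\bigr)_{\alpha\in\mathbb{N}}.
\]
Combined with the first step and the continuity of $\tanh$, the continuous mapping theorem then delivers the claimed convergence of the TAP residual to $0$ in distribution.

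The main technical obstacle is this last transfer. For $s$ it is immediate since $|s|\leq 1$, but $y$ is unbounded, so one must establish uniform integrability of $y^i_{\alpha,N}$ under $\nu^\intercal_{\alpha,N}$ for each fixed $\alpha$, uniformly in $N$. Concretely, a bound of the form $\E\,\gibbs{y^2}^\intercal_{\alpha,N}\leq C(\alpha)$ suffices, since it yields $\E\bigl|\tfrac{1}{n}\sum_i y^i_{\alpha,N}-\gibbs{y}^\intercal_{\alpha,N}\bigr|\lesssim n^{-1/2}$ uniformly in $N$, and this controls the error introduced by the diagonal argument. Such a bound should follow from the Gaussian tails of $y(\sigma)$ implied by \eqref{eq:y-defn-indept} together with the fact that $\nu_N(X_\alpha)$ is bounded away from $0$ in probability for each fixed $\alpha$ under the pure-state decomposition guaranteed by the assumption $\zeta(q_*)>0$.
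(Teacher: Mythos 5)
Your argument is correct, but it takes a more circuitous route than the paper's. You reconstruct the Gibbs averages $\gibbs{s}^\intercal_{\alpha,N},\gibbs{y}^\intercal_{\alpha,N}$ as $n\to\infty$ limits of empirical replica averages, invoke Theorem~\ref{thm:conv-spin-laws} at each finite $n$, and interchange the $n$ and $N$ limits via a diagonal argument controlled by a uniform-in-$N$ second-moment bound; the paper instead uses the product structure directly, writing $\E\,\gibbs{s}^{\intercal\,k_1}_{\alpha,N}\gibbs{y}^{\intercal\,k_2}_{\alpha,N} = \E\prod_{j}(s^j_{\alpha,N})^{d(j)}\psi_j(y^j_{\alpha,N})$ for $k_1+k_2$ i.i.d. replicas and reading off moment convergence from Lemma~\ref{lem:portmanteau} and Theorem~\ref{thm:conv-spin-laws}, then concluding weak convergence of $(\gibbs{s}^\intercal_{\alpha,N},\gibbs{y}^\intercal_{\alpha,N})$ from the sub-Gaussian tails supplied by the Localization Lemma. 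What your LLN approach buys is that it avoids needing the limit law to be moment-determined; what the paper's moment method buys is brevity, since there is no limit interchange to justify. The one ingredient you flag as ``should follow'' --- namely $\E\,\gibbs{y^2}^\intercal_{\alpha,N}\leq C$ uniformly in $N$ --- is indeed exactly what item~1 of Lemma~\ref{lem:(Localization)} delivers, since $\E\,\gibbs{y^2}^\intercal_{\alpha,N}=\E[(y^1_{\alpha,N})^2]$ and that lemma gives a uniform (in $N$ and $\alpha$) sub-Gaussian tail for $y^1_{\alpha,N}$; note that lemma's proof already absorbs the degenerate event $\{X_\alpha=\emptyset\}$, so your appeal to the $\mathrm{PD}$ weight being positive is not needed for this bound. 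Your explicit verification of \eqref{eq:tap-pure-state} from \eqref{eq:density-of-sy} (integrating out $y$ to get $\gibbs{s\mid h_\alpha}=\tanh h_\alpha$, completing the square to get $\gibbs{y\mid h_\alpha}=h_\alpha+(\xi'(1)-\xi'(q_*))\gibbs{s\mid h_\alpha}$) is correct and fills in the ``elementary calculation'' the paper asserts without proof.
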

The goal of this section is to prove these two results. 
We begin by proving that the overlap distribution for $\nu_{\alpha,N}$
has a simple limit. We then prove  Portmanteau
type theorems for $(s_{\alpha,N},y_{\alpha,N})$. These results
allow us to conclude that statistics of $(s_{\alpha,N},y_{\alpha N})$
are a continuous functionals of the overlap distribution of $\nu_{\alpha,N}$ (not $\nu^\intercal_{\alpha,N}$).
Since the latter converges, we  then conclude Theorem \ref{thm:conv-spin-laws}. The proof of Corollary \ref{cor:Cavity-TAP-finite-N} is then  immediate.

\subsection{Convergence of overlaps within a cluster}

We now prove that the $\nu_{\alpha,N}$ are replica symmetric.  Fix $\alpha\in \mathbb N$.
Let $(\sigma^{i})_{i=1}^{\infty}$ be drawn  from $\nu_{\alpha,N}^{\tensor\infty}$ and consider $R_{N}$ to be the doubly infinite overlap array defined by
\[
R_N=\left(R(\sigma^{i},\sigma^{j})\right).
\]
Finally, let $Q$ be the deterministic matrix which is doubly infinite, all $1$ on the diagonal
and $q_*$ on the off-diagonal. We then have the following
theorem. 
\begin{thm}
\label{thm:conv-ovlp} We have that
\[
R_{N}\convdist Q.
\]
\end{thm}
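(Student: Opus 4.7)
The plan is to deduce \prettyref{thm:conv-ovlp} directly from the properties of the cluster decomposition in \prettyref{thm:pure-state-AU}. Since the target $Q$ is deterministic, convergence in distribution in the product topology on $\mathbb{R}^{\mathbb{N}\times\mathbb{N}}$ is equivalent to entrywise convergence in probability. The diagonal entries are trivially equal to $1$, and by exchangeability of the i.i.d. draws from $\nu_{\alpha,N}$, all off-diagonal pairs have the same joint law. Thus the entire content of the theorem reduces to the single-pair statement
\begin{equation}\label{eq:single-pair-reduction}
\mathbb{E}\nu_{\alpha,N}^{\otimes 2}\left(\{|R_{12}-q_*|>\eps\}\right) \to 0,\qquad \eps>0.
\end{equation}

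The first step is to invoke the unconditional overlap concentration property built into the definition of $X_\alpha$: for each fixed $\alpha\in\mathbb{N}$ and $\eps>0$,
\[
\mathbb{E}\nu_N^{\otimes 2}\left(\{|R_{12}-q_*|>\eps\}\cap(X_\alpha\times X_\alpha)\right) \to 0.
\]
This encodes the heuristic, alluded to in the introduction, that points inside a single cluster sit at overlap roughly $q_*$, and it is one of the defining items 1.--5. of \prettyref{thm:pure-state-AU}.

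The second step is to pass from this unconditional estimate to an estimate on $\nu_{\alpha,N}$. This requires a lower bound on the random cluster mass $\nu_N(X_\alpha)$. Because the sets are ordered by mass and the complement of their union is $\nu_N$-null in probability, the ordered mass sequence $(\nu_N(X_\alpha))_\alpha$ converges jointly in distribution to the weight sequence of the asymptotic Gibbs measure, whose entries are strictly positive almost surely (by Panchenko's ultrametricity theorem and the Ghirlanda--Guerra identities, using that $\xi$ is generic and $\zeta(\{q_*\})>0$). Hence, for each fixed $\alpha$ and $\eta>0$ there exists $\delta=\delta(\alpha,\eta)>0$ and $N_0$ such that $\mathbb{P}(\nu_N(X_\alpha)<\delta)<\eta$ for every $N\geq N_0$. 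Writing
\[
\mathbb{E}\nu_{\alpha,N}^{\otimes 2}\left(\{|R_{12}-q_*|>\eps\}\right) \leq \delta^{-2}\,\mathbb{E}\nu_N^{\otimes 2}\left(\{|R_{12}-q_*|>\eps\}\cap(X_\alpha\times X_\alpha)\right) + \eta,
\]
and letting $N\to\infty$ and then $\eta\to 0$ yields \eqref{eq:single-pair-reduction}. A union bound over the $k^2$ entries of any $k\times k$ truncation then upgrades this pointwise estimate to convergence in probability of every finite truncation of $R_N$ to the corresponding block of $Q$, completing the argument.

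The main obstacle is bookkeeping rather than conceptual: one must verify that the clusters constructed via \cite{Jag14} simultaneously satisfy both the within-cluster overlap concentration used in step one and the mass non-degeneracy used in step two. Both are essentially built into the construction, so no new probabilistic input beyond Panchenko's ultrametricity theorem and the Ghirlanda--Guerra identities is needed here; the theorem is really a clean corollary of \prettyref{thm:pure-state-AU}.
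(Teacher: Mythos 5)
Your proof is correct and takes essentially the same route as the paper: both arguments combine item 4 of Theorem~\ref{thm:pure-state-AU} (within-cluster overlap concentration) with item 5 (Poisson--Dirichlet convergence of cluster masses, which gives the lower bound on $\nu_N(X_\alpha)$) and use exchangeability to reduce to a single off-diagonal pair. The only cosmetic difference is that the paper tests against a smooth function $F$ and works in $L^1$, whereas you pass to a probability bound via Markov; the two reductions are equivalent.
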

\begin{proof}
 By standard properties of product spaces, it suffices
to show that for any $k$, 
\begin{equation}\label{eq:conv-ovlp-pf}
\E\int_{X_{\alpha}^k}F(R^k_N)d\nu_{\alpha,N}
\to F(Q^k).
\end{equation}
Here $F$ is some smooth function on $[-1,1]^{k^2}$ and by $R^k_{N}$ and $Q^k$ are the overlap matrix for $k$ i.i.d. draws
from $\nu_{\alpha,N}$ and the first $k-$by$-k$ entries of $Q$ respectively. It suffices to work
on the event that $X_{\alpha}$ is non-empty. 
Since $F$ is smooth, observe that
it suffices to show that
\[
\E\int_{X_{\alpha}^{k}}\norm{R^k_N-Q^k}_{1}d\nu_{\alpha,N}^{\tensor k}=o_N(1).
\]
To this end, observe that  
\[
\int_{X_{\alpha}^{k}}\norm{R^k_N-Q^k}_{1}d\nu_{\alpha,N}^{\tensor k}=k\cdot(k-1)\frac{\int_{X_{\alpha}^{2}}\abs{R_{12}-q_*}d\mu_{\alpha,N}^{\tensor 2}}{\mu_N (X_\alpha)^2},
\]
where $R_{12}$ is the overlap of two replica from $\nu_{\alpha,N}$ and
the diagonal terms cancelled. This goes to zero in probability by \prettyref{thm:pure-state-AU} items 4 and 5. 
\end{proof}

\subsection{Continuity and Portmanteau-type results}

We now collect some continuity and Portmanteau type theorems which
will be useful in the following.
\begin{lem}
\label{lem:portmanteau} For each $\alpha$, the convergence 
\[
((s^i_{\alpha,N},y^i_{\alpha,N})) \stackrel{(d)}{\to}((s^i,y^i))
\]
holds if any only if for every $k$, $d:[k]\to\{0,1\}$,
and  family of continuous bounded functions $\{\phi_i\}$,
\begin{equation}\label{eq:port-eq}
\E \prod_{i\in[k]}(s^{i}_{\alpha,N})^{d(i)}\phi_i(y^i_{\alpha,N})\to\E \prod_{i\in[k]}(s^{i})^{d(i)}\phi_{i}(y^i).
\end{equation}
 Furthermore, it is necessary and sufficient to take $\phi$
of polynomial growth. 
\end{lem}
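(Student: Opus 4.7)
I will prove the equivalence in two parts. For the forward direction, observe that each test function $((s^i,y^i))_{i\in[k]} \mapsto \prod_i (s^i)^{d(i)}\phi_i(y^i)$ is bounded and continuous on the Polish space $(\{-1,1\}\times\R)^k$, so the hypothesized distributional convergence gives (\ref{eq:port-eq}). The nontrivial direction is the converse: here my plan is to exploit that weak convergence on the countable product $(\{-1,1\}\times\R)^\N$ is equivalent to weak convergence of every finite-dimensional marginal. Fixing $k$ and $F\in C_b(\{-1,1\}^k\times\R^k)$, I will decompose $F$ along the discrete coordinates via the algebraic identity
\begin{equation*}
\mathbbm{1}\{s=\epsilon\}=\prod_i\frac{1+\epsilon^i s^i}{2}=\frac{1}{2^k}\sum_{d:[k]\to\{0,1\}}\prod_i (\epsilon^i)^{d(i)}(s^i)^{d(i)},
\end{equation*}
obtaining $F(s,y)=2^{-k}\sum_d \prod_i (s^i)^{d(i)}\Phi_d(y)$ with $\Phi_d(y)=\sum_\epsilon \prod_i(\epsilon^i)^{d(i)}F(\epsilon,y)\in C_b(\R^k)$. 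This reduces the task to showing, for each $d$ and each $\Phi\in C_b(\R^k)$, that $\E \prod_i (s^i_N)^{d(i)}\Phi(y^1_N,\ldots,y^k_N)\to \E \prod_i (s^i)^{d(i)}\Phi(y^1,\ldots,y^k)$; the hypothesis (\ref{eq:port-eq}) handles only the special case where $\Phi$ is a product $\prod_i \phi_i(y^i)$.

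To upgrade from product test functions to all of $C_b(\R^k)$, I will introduce the positive sub-probability measures $\pi^\epsilon_N(A):=\prob(s_N=\epsilon,\, y_N\in A)$ on $\R^k$. Inverting the above linear relation shows that the hypothesis is equivalent to $\int \prod_i\phi_i(y^i)\,d\pi^\epsilon_N \to \int \prod_i\phi_i(y^i)\,d\pi^\epsilon$ for every $\epsilon\in\{-1,1\}^k$ and every $\phi_i\in C_b(\R)$. Specializing to a single nonconstant $\phi_i$ (with the remaining $\phi_j$ constant) yields weak convergence of each marginal $y^i_N$; a union bound then promotes marginal tightness to joint tightness of $\{\pi^\epsilon_N\}_N$ on $\R^k$. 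The products $\prod_i\phi_i(y^i)$ with $\phi_i\in C_b(\R)$ form a subalgebra of $C_b(\R^k)$ that contains constants and separates points, so the Stone-Weierstrass theorem yields density in $C(K)$ for every compact $K\subset\R^k$. Combining tightness with convergence on this determining class gives $\int\Phi\, d\pi^\epsilon_N\to\int\Phi\, d\pi^\epsilon$ for all $\Phi\in C_b(\R^k)$, and summing over $\epsilon$ weighted by $F(\epsilon,\cdot)$ concludes finite-dimensional, and thus full, weak convergence.

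For the polynomial-growth refinement, the plan is to truncate $\phi_i$ at level $M$ to a bounded continuous $\phi^M_i$, apply the bounded case already established, and send $M\to\infty$ using uniform integrability. The required input is $\sup_N\E|y^i_N|^p<\infty$ for every $p\geq 1$, which holds because $y$ is a centered Gaussian process with variance $\xi'(1)+o_N(1)$ bounded uniformly in $N$, so each $y(\sigma)$ is sub-Gaussian with uniformly bounded parameter; the Radon-Nikodym derivative $\cosh y/\int\cosh y\, d\nu_N$ defining $\nu^\intercal_{\alpha,N}$ has sub-Gaussian tails, which preserves moment bounds on averaging. The tail error $\E\prod_i(s^i_N)^{d(i)}|\phi_i(y^i_N)-\phi^M_i(y^i_N)|$ is then controlled by H\"older's inequality and the moment bound. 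The main technical obstacle I anticipate is the extension from product test functions to general $\Phi\in C_b(\R^k)$, which relies on carefully combining Stone-Weierstrass density with the joint tightness of the sub-probability measures $\pi^\epsilon_N$; for the polynomial-growth half, the subtlety is obtaining uniform moment control under the tilt $\nu^\intercal_{\alpha,N}$, which depends on $y$ itself, and this is handled via the sub-Gaussian tails of $\cosh y$.
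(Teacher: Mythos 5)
The paper deliberately omits the proof of this lemma, stating only that it is a standard consequence of $s_{\alpha,N}$ being $\{\pm 1\}$-valued and of the uniformly bounded sub-Gaussian tails of $y_{\alpha,N}$ from Lemma~\ref{lem:(Localization)}. Your argument is correct and fills in exactly those standard details: the Hadamard-type expansion $\mathbbm{1}\{s=\epsilon\}=2^{-k}\sum_d\prod_i(\epsilon^i s^i)^{d(i)}$ to decouple the discrete coordinates, Stone--Weierstrass together with tightness of the sub-probability measures $\pi^\epsilon_N$ to pass from product test functions to all of $C_b(\R^k)$ (take $\phi_i\in C_c(\R)$ if you want to avoid having to control the sup norm of the approximant globally), and the sub-Gaussian tail bound of item 1 of Lemma~\ref{lem:(Localization)} to justify the truncation and uniform-integrability argument for the polynomial-growth refinement.
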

\noindent This result is a standard consequence of the fact that 
 $s_{\alpha,N}$ are $\{\pm 1\}$ valued and $\{y_{\alpha,N}\}$
have uniformly bounded sub-Gaussian tails (see \prettyref{lem:(Localization)}),
so we omit its proof. 

Finally we note the following continuity result which is a consequence
of \prettyref{thm:conv-ovlp}. In the following, we let $Y_t =W_{\xi'(t)}$,
where $W_t$ denotes a standard Brownian motion.
\begin{lem}
\label{lem:conv-of-cosh-moments} 
For any $k,\ell\geq 1$  and any family of continuous bounded functions $\{\phi_i\}_{i\in[\ell]}$, we have that
\begin{align}
\E\int_{X_{\alpha}^{k+\ell}}\prod_{i\in[\ell]}\phi_i(y(\sigma^i)) \prod_{j={\ell}+1}^{\ell+k}\cosh(y(\sigma^{j}))&d\nu^{\tensor \ell+k}_{\alpha, N}\nonumber \\
\to\E[\prod_{i\in[\ell]}\E\left(\phi_{i}(Y_{1})\vert Y_{q_{*}}\right) &\cdot \E\left(\cosh(Y_{1})\vert Y_{q_{*}}\right)^{k}]\label{eq:cosh-moments-conv}
\end{align}
\end{lem}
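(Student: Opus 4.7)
The plan is to condition on the quenched Hamiltonian $H'$, reduce the left-hand side of \eqref{eq:cosh-moments-conv} to a bounded continuous functional of the replica overlap matrix, and then invoke Theorem \ref{thm:conv-ovlp} together with bounded convergence. Since the process $y$ is constructed independently of $H'$ (and hence of $\nu_{\alpha,N}$ and of $X_\alpha$), Fubini rewrites the left-hand side as
\[
\E \int_{X_\alpha^{k+\ell}} G_N\bigl(R^{k+\ell}_N\bigr) \, d\nu_{\alpha,N}^{\tensor(k+\ell)},
\]
where, for any positive semidefinite $R$ of size $(k+\ell)\times(k+\ell)$ with unit diagonal,
\[
G_N(R) \;:=\; \E\Big[\prod_{i\in[\ell]} \phi_i(Y_i) \prod_{j=\ell+1}^{k+\ell} \cosh(Y_j)\Big]
\]
and $(Y_1,\ldots,Y_{k+\ell})$ is a centered Gaussian vector with covariance $\xi'(R_{ij}) + o_N(1)$, the $o_N(1)$ coming from \eqref{eq:y-defn-indept} being uniform in $R$.

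Next I would argue that $G_N \to G$ uniformly on the compact set of admissible overlap matrices, where $G$ is the analogous Gaussian expectation with exact covariance $\xi'(R_{ij})$. This uniform convergence follows because Gaussian expectations of continuous, at-most-polynomial-growth test functions depend continuously on the covariance matrix, and the perturbation is uniform in $R$. The limit $G$ is continuous in $R$ and, crucially, is \emph{uniformly} bounded: the $\phi_i$ are bounded, while $\cosh(Y_j)$ has exponential moments controlled in terms of $\xi'(1)$ alone, with no dependence on $R$. Theorem \ref{thm:conv-ovlp} then gives $R_N^{k+\ell}\to Q^{k+\ell}$ in distribution under the averaged measure, and bounded convergence yields
\[
\E \int_{X_\alpha^{k+\ell}} G_N\bigl(R_N^{k+\ell}\bigr) \, d\nu_{\alpha,N}^{\tensor(k+\ell)} \;\longrightarrow\; G(Q^{k+\ell}).
\]

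Finally I would identify $G(Q^{k+\ell})$ using the Brownian-motion representation of $Y_t = W_{\xi'(t)}$. The limiting covariance $\xi'(1)\mathbbm{1}\{i=j\} + \xi'(q_*)\mathbbm{1}\{i\neq j\}$ is precisely that of $(Y_1^i)_{i\le k+\ell}$ with $Y_1^i = Y_{q_*} + Z_i$, where $Y_{q_*}\sim\cN(0,\xi'(q_*))$ and $Z_1,\ldots,Z_{k+\ell}$ are i.i.d.\ $\cN(0,\xi'(1)-\xi'(q_*))$ independent of $Y_{q_*}$. In particular, each $Y_1^i \eqdist Y_1$, and conditionally on $Y_{q_*}$ the $Y_1^i$ are independent; conditioning on $Y_{q_*}$, factorizing the expectation, and using exchangeability of the $\cosh$ factors produces exactly the right-hand side of \eqref{eq:cosh-moments-conv}.

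The only genuinely delicate step is the unboundedness of the $\cosh$ factors, which requires a bound on $G_N$ uniform in both $R$ and $N$. This is supplied by the uniform control $(\Sigma_N(R))_{jj} = \xi'(1)+o_N(1)$, which forces $\E[\cosh(Y_j)^k]$ to be bounded uniformly in $R$ and $N$ and thereby legitimizes the application of bounded convergence in the previous step.
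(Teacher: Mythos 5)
Your proposal is correct and follows essentially the same route as the paper: condition on (i.e.\ integrate out) the independent process $y$ to reduce the left side to a continuous bounded functional of the overlap matrix, invoke Theorem~\ref{thm:conv-ovlp} to pass to the limit $Q$, and then identify the limit via the Brownian representation $Y_t=W_{\xi'(t)}$. Your explicit remark that the uniform bound on the diagonal covariance $\xi'(1)+o_N(1)$ controls the exponential moments of the $\cosh$ factors (making $G_N$ uniformly bounded) is a slightly more explicit justification of the boundedness the paper asserts in passing, and your uniform-convergence handling of the $o_N(1)$ perturbation plays the same role as the paper's terse ``mollification argument.''
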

\begin{proof}

Observe that for $(\sigma^i)$ fixed, then 
\[
F((\sigma^i))=\E\prod_{i\in[\ell]}\phi_i(y(\sigma^i)) \prod_{j={\ell}+1}^{\ell+k}\cosh(y(\sigma^{j}))
\]
is a continuous, bounded function of the overlap array $R$.
In particular, we may view it as a function of the form $F=F(\xi'(R)+o_N(1))$,
where by $\xi'(R)+o_N(1)$, we mean that we apply a function $f$ to $R$ coordinate
wise that satisfies the estimate $f=\xi' +o_N(1)$. 

Now, recall from \prettyref{eq:y-defn-indept}, that $y$ is independent of $H'$ by construction.
Thus it is independent of $\nu_{\alpha,N}$ and $X_\alpha$.
We may then integrate the lefthand side
of \prettyref{eq:cosh-moments-conv} first in $y$, to obtain
\[
\E\int_{X_{\alpha}^{k+\ell}}F(\xi'(R)+o_{N}(1))d\nu_{\alpha,N}^{\tensor k +\ell}.
\]
By a mollification argument, it suffices to study
the convergence of 
\[
\E\int_{X_{\alpha}^{k+\ell}}F(\xi'(R))d\nu_{\alpha,N}^{\tensor k+\ell}
\]
where this is the same function $F$ as above. By \prettyref{thm:conv-ovlp},
 this converges to $F(\xi'(Q))$. It remains to understand $F(\xi'(Q))$.
By the definition of the matrix $Q$, 
\[
F(\xi'(Q))=\E\left[\left(\prod_{i\in[\ell]}\E\left(\phi_{i}(Y_{1})\vert Y_{q_{*}}\right)\right)\E\left(\cosh(Y_{1})\vert Y_{q_{*}}\right)^{k}\right],
\]
as desired.
\end{proof}

\subsection{Proofs of main theorems}
We can now turn to the proofs of the main results.
 If $E$ is a measurable set and 
$f\in L^1(\mu)$ then we denote $\fint_E fd\mu=\frac{1}{\mu(E)}\int f d\mu $ with the convention that 
this is zero if $\mu(E)=0$.

\begin{proof}[\textbf{{Proof of \prettyref{thm:conv-spin-laws}}}] Fix $\alpha$. It suffices to work on the event  that $X_{\alpha}$ is non-empty. 
By \prettyref{lem:portmanteau}, it suffices to prove \prettyref{eq:port-eq}  for each
$n$, $d:[n]\to\{0,1\}$ and family of continuous bounded $\{\phi_i\}$.
 Furthermore, we claim that it suffices to prove
\begin{equation}
\E\prod_{i\in[n]}\gibbs{\phi_i(y)}^\intercal_{\alpha,N}\to\E\prod_{i\in[n]}\phi_i(y^i_{\alpha}).\label{eq:portmanteau2}
\end{equation}
To see this, simply note that
\begin{align*}
\E\prod (s^i_{\alpha,N})^{d(i)}\phi_i(y_{\alpha,N}) 
&= \E \prod \gibbs{s^{d(i)}\phi_i(y)}_{\alpha,N}^\intercal \\
& =\E\prod\frac{\int_{X_{\alpha}}\int_{\Sigma_1}\phi_i(y(\sigma))s^{d(i)}e^{sy(\sigma)}dsd\nu_N}{\int_{X_{\alpha}}2\cosh(y)d\nu_N}\\
 & =\E\prod\left\langle f_{d(i)}(y)\phi_{i}(y)\right\rangle^\intercal_{\alpha,N},
\end{align*}
where $f_{d}(x)=\tanh(x)$ if $d=1$ and $1$ if $d=0$.

With this claim in hand,  we now prove \eqref{eq:portmanteau2}.  To this end, fix
$\phi_{i}$ as above. By \eqref{eq:nu-alpha-T},
\begin{align*}
\E\prod_i\phi_i(y^i_{\alpha,N}) 
 & =\E\prod_i\frac{\fint_{X_{\alpha}}\phi_i(y(\sigma))\cosh(y(\sigma))d\nu_N}{\fint_{X_{\alpha}}\cosh(y(\sigma))d\nu_N}.
\end{align*}
Observe that $Z_{\alpha}=\fint_{X_{\alpha}}\cosh(y)d\nu_{N}$ satisfies $Z_{\alpha}\geq1$. By \prettyref{lem:(Localization)},
\[
P(Z_{\alpha}\geq L)\leq \frac{C({\xi})}{L}
\]
uniformly in $N$. Thus by a standard approximation argument, we can approximate $1/Z_{\alpha}^n$ by polynomials in $Z_{\alpha}$ in the above expectations. In particular, it suffices to study limits of integrals of the form
\[
\E \prod_i \fint_{X_\alpha} \phi_i(y(\sigma))\cosh(y(\sigma))d\nu_N\cdot(\fint_{X_\alpha}\cosh(y(\sigma))d\nu_N)^l.
\]
This is exactly of the form \prettyref{eq:cosh-moments-conv} with $k=l$, $\ell=1$ and the family $\{\phi_{i}(y) \cdot \cosh(y)\}_{i\in [n]}$ by Fubini's theorem. Thus
by \prettyref{lem:conv-of-cosh-moments}, 
\[
\E\prod_{i}\phi_{i}(y_{\alpha,N}^{i})\to\E\prod_i\frac{\E\left(\phi_i(Y_{1})\cosh(Y_{1})\vert Y_{q_{*}}\right)}{\E\left(\cosh(Y_{1})\vert Y_{q_{*}}\right)}.
\]

It remains to recognize the righthand side of the above display as an average with respect to $\nu_\alpha$. 
Observe that 
\begin{align*}
\frac{\E\left(\phi(Y_{1})\cosh(Y_{1})\vert Y_{q_{*}}\right)}{\E\left(\cosh(Y_{1})\vert Y_{q_{*}}\right)} 
& =\E\left(\phi(Y_{1})e^{\log\cosh(Y_{1})-\log\cosh(Y_{q*})-\frac{1}{2}(\xi'(1)-\xi'(q_{*}))}\vert Y_{q_{*}}\right)\\
 & \eqdist \int \phi(y)d\nu_\alpha,
\end{align*}
where 
the last equality is by definition. Thus
\begin{align*}
\E\prod\phi_i(y^i_{\alpha,N}) & \to\E\int\prod \phi_i(y^i)d\nu _{\alpha}^{\tensor n}
\end{align*}
as desired. \end{proof}

\begin{proof}[\textbf{{Proof of \prettyref{cor:Cavity-TAP-finite-N}}}]
Let $m_{\alpha,N}=\langle s\rangle_{\alpha,N}^\intercal$ and $h_{\alpha,N}=\langle y\rangle_{\alpha,N}^\intercal$. 
It suffices to show that 
for each $\alpha\in \mathbb N$, 
\[
(m_{\alpha,N},h_{\alpha,N})\convdist (\gibbs{s}_\alpha,\gibbs{y}_\alpha).
\]
Suppose first that this claim is true. Then the result immediately follows from 
\eqref{eq:tap-pure-state}.

We now turn to the claim. Observe that by \prettyref{lem:(Localization)}, these random variables
have sub-Gaussian tails. Thus it suffices to prove convergence of the moments
\[
\E m_{\alpha,N}^{k_1}h_{\alpha,N}^{k_2}.
\]
To this end, let $k=k_1+k_2$ and let $\{\psi_j\}_{j\in[k]}$ satisfy $\psi_j = 1$ if $i\leq k_1$ and $\psi_j(x)=x$ if $j>k_1$.
Finally let $d:[k]\to\{0,1\}$  be such that $d(i)=1$ if $i\leq k_1$ and $d(i)=0$ otherwise. Then,
by \prettyref{lem:portmanteau} and Theorem \ref{thm:conv-spin-laws}, we have that
\[
\E m_{\alpha,N}^{k_1}h_{\alpha,N}^{k_2} =\E \prod_j (s^j_{\alpha,N})^{d(j)}\psi_{j}(y^j_{\alpha,N})
\to\E \prod_j (s^j_{\alpha})^{d(j)}\psi_{j}(y^j_{\alpha})=\E \gibbs{s}_{\alpha}^{k_1}\gibbs{y}_{\alpha}^{k_2}
\]
as desired.  
\end{proof}

\section{Stability of clusters under lifts}\label{sec:stability}

In this section, we show that important properties of the pure states are carried over after lifting in one coordinate. We start with the following construction. 
For $\sigma = (\sigma_{1},\ldots, \sigma_{N}) \in \Sigma_{N}$, let $\rho(\sigma) = (\sigma_{2}, \ldots, \sigma_{N}) \in \Sigma_{N-1}$. For any mixed $p$-spin glass model, the Hamiltonian, $H_{N}$,
decomposes into a sum of three Gaussian processes:
\begin{equation}
H_{N}(\sigma)=\tilde H_{N}(\rho(\sigma))+\sigma_{1}y_{N}(\rho(\sigma))+r_{N}(\sigma_{1},\rho(\sigma)).\label{eq:ll}
\end{equation}
Properties of these Gaussian processes are described in Lemma \ref{lem:decomposition-lemma}. For $\sigma \in \Sigma_{N-1}$ set
$$H'_{N}(\sigma):= \tilde H_{N}(\sigma) + r_{N}(1,\sigma)$$
and let  $\mu_{N}'$ be the Gibbs measure corresponding to the Hamiltonian $ H_{N}'$. This Hamiltonian, and thus $\mu_{N}'$, is independent of $y_{N}$. We are thus in the setting of Section \ref{sec:cavity} where $H_{N}'$ satisfies \eqref{eq:ham-for-spin-section} and $y_{N}$ satisfies \eqref{eq:y-defn-indept}. 

Let $\tilde W_{\alpha,N-1}$, $\alpha \in \mathbb N$ be the subsets of $\Sigma_{N-1}$ constructed via Theorem \ref{thm:pure-state-AU}  relative to the measure $\mu_{N}'$. Set 
\begin{equation}\label{def:Wnda}
W_{\alpha,N}^{\dagger}= \Sigma_{1} \times \tilde W_{\alpha,N-1} \subset \Sigma_{N}.
\end{equation}
Order the sets $W_{\alpha,N}^{\dagger}$ with respect to their $\mu_{N}$ masses. That is, define subsets $W_{\alpha,N} \subset \Sigma_{N}$, for $\alpha \in \mathbb N$, such that
\begin{equation}\label{def:Wna}
\mu_{N}(W_{1,N}) \geq \mu_{N}(W_{2,N})\geq \ldots
\end{equation}
and so that
 $$W_{\alpha,N}=W_{\pi_N(\alpha),N}^{\dagger},$$ 
for some (random) automorphism $\pi_N:\mathbb N \to \mathbb N$. 
 
 \begin{rem}
Note that there is not a unique way to define the projection $\pi_N$ since,
there are possibly ties $W_\alpha =W_\beta$. Note, however, this only introduces
a finite indeterminacy as there are only finitely many such sets that are non-empty by 
construction. 
The reader can take any tie breaking rule. 
\end{rem}

The goal of this section is
to show that the collection $(W_{\alpha,N})_{\alpha \in \mathbb N}$  also satisfies 
items 1.-5. from Theorem \ref{thm:pure-state-AU}.
(For the rest of section, we drop the subscript $N$ of our notation.) 
The main idea is that at the level of overlaps, the measure $\mu$ on the sets
$W_\alpha$ will essentially be the same as the measure $(\mu')^\intercal$
on the sets $\tilde{W}_{\pi(\alpha)}$. Since on $\Sigma_{N-1}$,  $(\mu')^\intercal\gg\mu'$, 
overlap events that are rare for $\mu'$ will still be rare for $(\mu')^\intercal$. 
We begin by recording the following lemma which is a quantification of this observation. 

Recall the local field $y=y_{N}$ from \eqref{spinlocalfield} and the function $T$ from \eqref{eq:rad-nyk}. 
Let 
\begin{equation}\label{eq:K-tilde}
\tilde{K}(\mu')=\left(\int\cosh(2y)d\mu'\right)^{1/2}.
\end{equation}

\begin{lem}[Tilting Lemma] \label{lem:TL} There are constants $C,c>0$
such that with probability at least $1-\frac{1}{c}e^{-cN}$, \[
\left(1-\frac{C}{\sqrt{N}}\right)\int_{A}Td\mu'\leq\mu(\Sigma_{1}\times A)\leq\left(1+\frac{C}{\sqrt{N}}\right)\int_{A}T\mu', \quad \forall A \subset \Sigma_{N-1}.
\]
In particular, 
\[
\mu(\Sigma_{1}\times A)\leq\tilde{K}(\mu')\left(1+\frac{C}{\sqrt{N}}\right)\sqrt{\mu'\left(A\right)}.
\]
\end{lem}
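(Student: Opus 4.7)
The plan is to reduce the lemma to a uniform Gaussian concentration estimate on the residual process $r_N$. Substituting the decomposition \eqref{eq:ll} into the Gibbs weight $e^{-H_N(\sigma)}$ and summing out $\sigma_1 \in \{-1,+1\}$ gives, for any $A \subset \Sigma_{N-1}$,
\[
\mu(\Sigma_1 \times A) \;=\; \frac{\sum_{\rho \in A} e^{-H_N'(\rho)}\,B_N(\rho)}{\sum_{\rho \in \Sigma_{N-1}} e^{-H_N'(\rho)}\,B_N(\rho)},
\quad B_N(\rho) := e^{y_N(\rho) - R(\rho)} + e^{-y_N(\rho)},
\]
where $R(\rho) := r_N(-1,\rho) - r_N(1,\rho)$. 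When $R \equiv 0$ the right-hand side is exactly $\int_A T \, d\mu'$, and the elementary inequality
\[
e^{-|R(\rho)|}\cdot 2\cosh(y_N(\rho)) \;\leq\; B_N(\rho) \;\leq\; e^{|R(\rho)|}\cdot 2\cosh(y_N(\rho))
\]
shows that on the event $\{\sup_\rho |R(\rho)| \leq \delta\}$ the ratio is squeezed between $e^{-2\delta}$ and $e^{2\delta}$ times $\int_A T \, d\mu'$. Choosing $\delta = C/\sqrt{N}$ and using $e^{\pm 2\delta} = 1 \pm O(1/\sqrt{N})$ then produces the two-sided bound of the lemma (after a harmless redefinition of the constant).

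The bulk of the work is the concentration estimate $\sup_\rho |R(\rho)| \leq C/\sqrt{N}$ with probability at least $1 - \tfrac{1}{c}e^{-cN}$. Decompose $r_N$ according to the multiplicity $k \geq 2$ of the index $1$ in each monomial of $H_N$. Since $\sigma_1^2 = 1$, the contributions with $k$ even are independent of $\sigma_1$ and therefore cancel in $R(\rho)$, leaving only the odd-$k$ pieces with $k \geq 3$. The variance of a single such piece at fixed $\rho$ is $\tfrac{\beta_p^2}{N^{p-1}}\binom{p}{k}(N-1)^{p-k} = O(N^{1-k})$, so summing over $p$ and odd $k \geq 3$ yields $\sigma_{\max}^2 := \sup_\rho \mathrm{Var}(R(\rho)) = O(1/N^2)$. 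Combined with the envelope $\E \sup_\rho |R(\rho)| \leq \sigma_{\max}\sqrt{2 \log |\Sigma_{N-1}|} = O(1/\sqrt{N})$ and the Borell--TIS inequality, this delivers the required exponentially small exceptional probability.

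For the ``in particular'' estimate, note that $\cosh \geq 1$ implies $\int \cosh(y_N) \, d\mu' \geq 1$, so $\int_A T \, d\mu' \leq \int_A \cosh(y_N) \, d\mu'$. Cauchy--Schwarz together with $\cosh^2(y) = (1 + \cosh(2y))/2 \leq \cosh(2y)$ yields
\[
\int_A \cosh(y_N) \, d\mu' \;\leq\; \Bigl(\int \cosh^2(y_N) \, d\mu'\Bigr)^{1/2}\sqrt{\mu'(A)} \;\leq\; \tilde K(\mu')\sqrt{\mu'(A)},
\]
which combined with the upper inequality from the main estimate gives the claim.

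The main obstacle is the cancellation observation above: the leading $k = 2$ contribution to $r_N$ has variance of order $1/N$, so a naive Borell--TIS bound applied to $r_N$ itself would only yield $\sup_\rho = O(1)$, which is useless. It is essential that this leading component be $\sigma_1$-independent and hence drop out of $R(\rho)$, reducing the effective variance by a further factor of $N$ and enabling the $O(1/\sqrt{N})$ sup bound. Once this cancellation is in hand, the remaining steps are routine.
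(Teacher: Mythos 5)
Your proof is correct and follows essentially the same route as the paper's: write $\mu(\Sigma_1\times A)$ as a ratio over $\mu'$, squeeze the $\sigma_1$-sum between $e^{\pm|R(\rho)|}\cdot 2\cosh(y(\rho))$, and control $\sup_\rho|R(\rho)|$ by the even-multiplicity cancellation plus Borell--TIS --- which is precisely the content the paper outsources to Lemma~\ref{lem:decomposition-lemma} --- and then Cauchy--Schwarz with $\cosh^2\leq\cosh(2\cdot)$ for the ``in particular'' clause. The only cosmetic difference is that you inline the Gaussian concentration step rather than cite the decomposition lemma, and you carry an opposite sign convention on the Hamiltonian, neither of which changes anything.
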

\begin{proof}
This result immediately follows from Lemma \ref{lem:decomposition-lemma}. Observe
that if we let 
\[
\Delta=2\max_{\sigma \in \Sigma_{N-1}}\abs{r(1,\sigma)-r(-1,\sigma)},
\]
then 
\begin{equation*}
\mu(\Sigma_{1}\times A)  =\frac{\int_{A}\int_{\Sigma_{1}}e^{ \tilde H(\sigma)+\epsilon y(\sigma)+r(\epsilon,\sigma)}d\epsilon d\sigma}{\int_{\Sigma_{N-1}}\int_{\Sigma_{1}}e^{ \tilde H(\sigma)+\epsilon y(\sigma)+r(\epsilon,\sigma)}d\epsilon d\sigma}
  \leq \int_{A}T(\sigma)d\mu'e^{\Delta}.
\end{equation*}
Similarly 
\[
\mu(\Sigma_{1}\times A)\geq\int_{A}T(\sigma)d\mu'e^{-\Delta}.
\]
The first result then follows by Lemma \ref{lem:decomposition-lemma}, and the second result
follows from the first and the Cauchy-Schwarz inequality.
\end{proof}
We now start by proving the properties mentioned above.
\begin{lem}\label{lem:Wnaregood}
Let $q'_N=q_{N-1}, a'_{N}=a_{N-1},b'_{N}=b_N^{1/4}$, and $\epsilon_{N}'=\epsilon_N^{1/4}$. Then the sets
$\{W_{\alpha}\}_{\alpha\in[m_{N}]}$ satisfy items $1-4$
 Theorem \ref{thm:pure-state-AU}  with probability $1-o_{N}(1)$, where the sequences $q_N',a_N',b_N',\epsilon_N'$ and $m_N$
 satisfy those conditions.
\end{lem}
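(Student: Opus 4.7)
The plan has two distinct components. First, the purely combinatorial and geometric properties (disjointness, boundedness of the number of non-empty sets by $m_N$, and overlap control within and between clusters at the geometric level) transfer essentially verbatim from $\tilde W_{\alpha,N-1}$ to $W_{\alpha,N} = \Sigma_1 \times \tilde W_{\pi_N(\alpha),N-1}$. The key identity is
\[
R_N(\sigma,\tau) = \frac{\sigma_1 \tau_1}{N} + \frac{N-1}{N} R_{N-1}(\rho(\sigma),\rho(\tau)),
\]
so any overlap statement for the $\tilde W_{\alpha,N-1}$ at scale $q_{N-1}$ survives on $W_{\alpha,N}$ at the same scale with an $O(1/N)$ perturbation that is readily absorbed into the new parameters. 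The reordering $\pi_N$ by $\mu_N$-mass is only a relabeling and does not affect any of the unordered properties of the collection.

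The measure-theoretic items (the $\mu_N$-mass of "bad" overlap events and of the complement of $\cup_\alpha W_\alpha$) require the Tilting Lemma. For each such item I would express the relevant quantity as $\mu_N(\Sigma_1 \times A)$ for an event $A \subset \Sigma_{N-1}$ (for product events like those appearing in item~3, as $(\mu_N\otimes \mu_N)(\Sigma_1^2 \times B)$ for $B \subset \Sigma_{N-1}^2$, handled by Fubini and two applications of the Tilting Lemma). Applying Lemma~\ref{lem:TL} yields
\[
\mu_N(\Sigma_1 \times A) \;\leq\; \tilde K(\mu'_N)\Bigl(1+\tfrac{C}{\sqrt N}\Bigr) \sqrt{\mu'_N(A)},
\]
and the corresponding property of $\tilde W_{\alpha,N-1}$ gives $\mu'_N(A) \leq \epsilon_N$ or $b_N$ as appropriate.

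The final step is to control $\tilde K(\mu'_N)$. Since $y_N$ is independent of $\mu'_N$ and $y_N(\sigma)$ is Gaussian with variance $\xi'(1)$, Fubini gives
\[
\mathbb E\bigl[\tilde K(\mu'_N)^2\bigr] = \mathbb E\int \cosh(2 y_N(\sigma))\,d\mu'_N(\sigma) = e^{2\xi'(1)} + o_N(1).
\]
By Markov, $\mathbb P(\tilde K(\mu'_N) > \epsilon_N^{-1/4}) = O(\sqrt{\epsilon_N}) \to 0$, and on the complement we obtain $\mu_N(\Sigma_1\times A) \leq C \epsilon_N^{-1/4}\sqrt{\epsilon_N} = C \epsilon_N^{1/4}$, and similarly $b_N^{1/4}$. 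This explains the fourth-root exponents in the definition of $\epsilon'_N$ and $b'_N$.

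The step I expect to be the main obstacle is the product-overlap statement (item~3 and, for item~2, the between-replica formulation), which lives on $\Sigma_{N-1}^2$ and hence demands a product version of the Tilting Lemma. Applying Lemma~\ref{lem:TL} separately in each coordinate via Fubini yields a factor of $\tilde K(\mu'_N)^2$, so the Markov threshold must be tightened (e.g.\ $\tilde K(\mu'_N)\leq \epsilon_N^{-1/8}$, which still has failure probability $O(\epsilon_N^{1/4}) \to 0$) to still conclude with the exponent $1/4$. Once this product version is set up, items~1--4 follow uniformly in $\alpha \in [m_N]$ from the corresponding properties of $\tilde W_{\alpha,N-1}$.
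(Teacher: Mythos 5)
Your proposal is correct and follows essentially the same route as the paper: use the Tilting Lemma to transfer items 1--3 from $\tilde W_\alpha$ to $W_\alpha$, applying it twice via Fubini for the two-replica events (which is indeed what produces the factor $\tilde K(\mu')^2$ and forces the fourth-root exponents), and control $\tilde K(\mu')$ by a moment bound plus Markov. Your second-moment estimate $\E[\tilde K^2] = e^{2\xi'(1)} + o_N(1)$ is a clean direct substitute for the paper's appeal to Lemma \ref{lem:(Localization)} item 3, and the observation that the $O(1/N)$ overlap shift is absorbed by taking $a_{N-1}\ge 1/N$ matches the paper. The one thing you gloss over is item 4 of Theorem \ref{thm:pure-state-AU}, which is an integral of $|R_{12}-q_*|$ rather than a probability of an overlap event and hence does not follow from a direct Tilting-Lemma application; the paper instead deduces it from items 2--3 together with the constraint $\zeta_N[q'_N+a'_N,1]\ge \zeta(\{q_*\})-b'_N$, which pins the overlap near $q_*$ from above as well. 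Worth making that step explicit, but it is a small addition and does not affect the overall correctness of your approach.
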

\begin{proof}
Since the sets $\tilde W_{\alpha}$ are disjoint,
 $W_{\alpha}^{\dagger}$ and $W_{\alpha}$ are as well and satisfy 
\[
(\cup_{\alpha} W_{\alpha})^{c} = (\cup_{\alpha} W_{\alpha}^{\dagger})^{c}=\left(\cup_{\alpha}\Sigma_{1}\times \tilde W_{\alpha}\right)^{c}=\Sigma_{1} \times \left(\cup_{\alpha} \tilde W_{\alpha}\right)^{c}.
\]
Thus by the Tilting Lemma (Lemma \ref{lem:TL}) and item 1 of Theorem \ref{thm:pure-state-AU}, we have
that with high probability, 
\begin{equation}\label{eq:dustWn}
\mu \left((\cup_{\alpha} W_{\alpha})^{c}\right)  \leq\left(1+\frac{C}{\sqrt{N}}\right)\tilde{K}(\mu')\cdot\sqrt{\epsilon_{N}}.
\end{equation}
Furthermore, by the Tilting Lemma  and item 2 of Theorem \ref{thm:pure-state-AU} ,
we obtain for $\beta =\pi^{-1}(\alpha)$
\begin{align*}
\mu^{\tensor2}\left(\sigma^{1},\sigma^{2}\in W_{\alpha}:\right.&\left.R_{12}\leq q_{N-1}-2a_{N-1}\right) \\
& \leq\tilde{K}(\mu')^{2}(1+\frac{C}{\sqrt{N}})\sqrt{\left(\mu'\right)^{\tensor2}\left(\sigma^{1},\sigma^{2}\in \tilde W_{\beta}:R_{12}\leq q_{N-1}-2a_{N-1}+\frac{1}{N}\right)}\\
 & \leq\tilde{K}(\mu')^{2}(1+\frac{C}{\sqrt{N}})\sqrt{b_{N}},
\end{align*}
where we used the fact that we may take $a_{N-1}\geq\frac{1}{N}$.
Argue similarly to get that for $\alpha_{1}\neq\alpha_{2}$, 
\[
\mu^{\tensor2}\left(\sigma^{1}\in W_{\alpha_{1}},\sigma^{2}\in W_{\alpha_{2}}:R_{12}\geq q_{N-1}+2a_{N-1}\right)\leq\tilde{K}(\mu')^{2}\left(1+\frac{C}{\sqrt{N}}\right)\sqrt{b_{N}}.
\]
Observe that by \prettyref{lem:(Localization)}, with probability tending to 1, $\tilde{K}(\mu')\leq b_N^{-\gamma}\vee \epsilon_N^{-1/4}$. This
yields the desired result after observing that since $\zeta_N[q_N+a_N,1]\geq \zeta\{q_*\} -b_N$, for $N$ sufficiently large, 
the same is true for $q'_N, a'_N$ and $b'_N$, and that item $4$ in \prettyref{thm:pure-state-AU} is implied by this fact
regarding $\zeta_N$ and items 2 and 3.
\end{proof}

It remains to show that the weights $\mu(W_{\alpha})$ converge to
a Poisson-Dirichlet process.
\begin{lem}\label{lem:convergencetoPD}
We have that 
\[
(\mu(W_{\alpha}))_{\alpha \in \mathbb N}\to(v_{\alpha})_{\alpha \in \mathbb N}
\]
in distribution on the space of mass partitions $\cP_{m}$.
\end{lem}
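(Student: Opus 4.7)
The plan is to express $\mu(W_\alpha^\dagger)$ in terms of quantities involving only $\mu'$ and the local field $y$, then apply the invariance of Poisson--Dirichlet under i.i.d.\ random weighting.

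First, by the Tilting Lemma \ref{lem:TL}, with probability $1 - o_N(1)$,
\[
\mu(W_\alpha^\dagger) = \mu'(\tilde W_\alpha)\cdot \frac{A_{\alpha,N}}{Z_N}\bigl(1 + O(N^{-1/2})\bigr),
\]
where $A_{\alpha,N} = \fint_{\tilde W_\alpha}\cosh(y)\, d\mu'$ and $Z_N = \int \cosh(y)\, d\mu'$. Since the $(W_\alpha)$ are by definition the decreasing rearrangement of the $(W_\alpha^\dagger)$, and this rearrangement is continuous on the relevant space of summable mass partitions, it suffices to study the rearranged sequence $(\mu'(\tilde W_\alpha) A_{\alpha,N}/Z_N)_\alpha$.

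Second, \prettyref{thm:pure-state-AU}, item 5, applied to $\mu'$ gives that the decreasing rearrangement of $(\mu'(\tilde W_\alpha))_\alpha$ converges in distribution to $(v'_\alpha) \sim \mathrm{PD}(m)$, where $m = \zeta(\{q_*\})$. I would upgrade this to the joint convergence
\[
\bigl((\mu'(\tilde W_\alpha), A_{\alpha,N})\bigr)_\alpha \convdist \bigl((v'_\alpha, A_\alpha)\bigr)_\alpha,
\]
where, conditionally on the limiting ultrametric genealogy carrying $(v'_\alpha)$, the $(A_\alpha)$ are i.i.d.\ copies of $\E[\cosh(Y_1)\vert Y_{q_*}]$ and thus independent of $(v'_\alpha)$. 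Since $y$ is independent of $\mu'$, the marginal convergence $A_{\alpha,N}\to \E[\cosh(Y_1)\vert Y_{q_*}]$ for fixed $\alpha$ follows from \prettyref{thm:conv-ovlp} and \prettyref{lem:conv-of-cosh-moments} applied to $\nu_{\alpha,N}=\mu'(\,\cdot\,|\tilde W_\alpha)$. The joint statement is obtained by computing moments of the form $\E \prod_i (A_{\alpha_i,N})^{k_i} \prod_j f_j(\mu'(\tilde W_{\alpha_j}))$: integrating out $y$ first, these become continuous functionals of the overlap array on draws from several clusters, and one uses that within each cluster overlaps concentrate at $q_*$ (item 2 of \prettyref{thm:pure-state-AU}) while between distinct clusters overlaps are $\le q_* - o(1)$ with high $\mu'$-probability (item 3), so that the limit factorizes in the form expected of conditionally i.i.d.\ $(A_\alpha)$.

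Third, conditionally on the genealogy, the decreasing rearrangement of $(v'_\alpha A_\alpha / \sum_\beta v'_\beta A_\beta)_\alpha$ is distributed as $\mathrm{PD}(m)$, by the Ruelle/Pitman invariance of Poisson--Dirichlet under i.i.d.\ random weighting; the moment bound $\E A^m < \infty$ required to apply it follows from the sub-Gaussian tails of $y$ via \prettyref{lem:(Localization)}. Averaging over the genealogy and combining with the first two steps yields $(\mu(W_\alpha))_\alpha \convdist (v_\alpha)$, as desired.

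The main obstacle is the joint convergence in the second step, specifically verifying that the weights $A_\alpha$ are conditionally i.i.d.\ given the genealogy. This relies on combining the ultrametric separation between distinct top-level clusters with the replica-symmetric behavior inside each cluster established in \prettyref{sec:cavity}, so that $A_{\alpha,N}$ can be recognized in the limit as a functional of a single Gaussian $Y_{q_*}$ whose fluctuations vary independently across clusters once the lower-level structure is fixed.
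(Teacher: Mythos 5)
Your approach is genuinely different from the paper's, and it contains a gap in its core step. The paper proves the lemma by invoking the Ghirlanda--Guerra framework from \cite[Section 6]{Jag14}: since $\xi$ is generic, $\mu_N$ satisfies the approximate Ghirlanda--Guerra identities, and convergence of the $W_\alpha$-weights to a Poisson--Dirichlet process is reduced to showing $\lim_{\kappa,\lambda\to 0}\limsup_N \E\langle|U_{12}-\phi_{\kappa,\lambda}|\rangle = 0$, where $U_{12}$ is the indicator that two draws land in the same $W_\alpha$. This is then checked using the Tilting Lemma together with the overlap properties of the sets $\tilde W_\alpha$ under $\mu'$. No joint convergence of $(v'_\alpha, A_\alpha)$ and no Poisson--Dirichlet invariance is invoked.

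The gap in your plan is in the second and third steps. You assert that conditionally on the limiting genealogy the $(A_\alpha)$, where $A_\alpha=\E[\cosh(Y_1)\,|\,Y^\alpha_{q_*}]$, are i.i.d.\ and independent of $(v'_\alpha)$, and you then invoke the one-level Poisson--Dirichlet invariance under i.i.d.\ random weighting. This is correct only in the $1$-RSB case, where the Parisi measure charges only $\{0,q_*\}$ and the fields $Y^\alpha_{q_*}$ attached to distinct top-level clusters are independent. For a generic model the Parisi measure may have arbitrary structure below $q_*$, the limiting object is a full Ruelle Probability Cascade, and the $Y^\alpha_{q_*}$ are \emph{correlated} across clusters through their shared ancestral Gaussian increments. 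Conditioning on any reasonable notion of ``genealogy'' leaves the $A_\alpha$ conditionally i.i.d.\ only within sibling groups at the penultimate level of the tree, not across the whole family, so the hypothesis of the Ruelle/Pitman i.i.d.-weighting invariance fails. The correct statement here is the full RPC invariance under tilting by a function of the path field (as in \cite{PanchSKBook}), which works level by level along the cascade; to use it you would also have to strengthen your second step to establish joint convergence of the overlap array \emph{across} several clusters to the full RPC overlap structure, not merely convergence of each $A_{\alpha,N}$ marginally. That is a substantial amount of extra work, and it is precisely what the paper's Ghirlanda--Guerra route is designed to avoid.

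A minor point: the parameter of the limiting Poisson--Dirichlet process is $1-\zeta(\{q_*\})$, not $\zeta(\{q_*\})$; this does not affect your moment bound since $\cosh$ tilted by a Gaussian has all moments, but the parameter should be stated correctly.
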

\begin{proof}
Recall that $\{\mu_N\}$ satisfy the approximate Ghirlanda-Guerra identities
since $H_{N}$ is a generic model. Let $U_{12}=U(\sigma^{1},\sigma^{2})$
be 
\[
U_{12}=\indicator{\exists\alpha\in\N:\sigma^{1},\sigma^{2}\in W_{\alpha}}
\]
and let $L_{N}=\left\{ \sigma^{1},\sigma^{2}\in\cup_{\alpha} W_{\alpha}\right\} $.
Then by the arguments of \cite[Section 6]{Jag14}, in order to prove that this sequence
converges, it suffices to prove that for some $\phi_{\kappa,\lambda}$
which satisfies 
\[
\phi_{\kappa,\lambda}(x)=\begin{cases}
0 & x\leq q_{*}-\kappa\\
1 & x\geq q_{*}-\lambda,
\end{cases}
\]
and interpolates between the two values for $x\in[q_{*}-\kappa,q_{*}-\lambda]$,
we have 
\[
\lim_{\kappa,\lambda\to0}\limsup_{N\to\infty}\E\left\langle \abs{U_{12}-\phi_{\kappa,\lambda}}\right\rangle =0.
\]
To see this, if we denote $\abs{U_{12}-\phi_{\kappa,\lambda}}=A$,
then 
\[
\E\left\langle A\right\rangle _{\mu}\leq\E\left\langle A\indicator{L_{N}}\right\rangle +o_{N}(1)
\]
where the fact that the second term is $o_{N}(1)$ follows from \eqref{eq:dustWn}.
Now 
\begin{align*}
\E\left\langle A L_{N}\right\rangle  & =\E\left\langle A\indicator{L_{N},R_{12}\ge q_{*}-\lambda}U_{12}\right\rangle +\E\left\langle A\indicator{L_{N},R_{12}\leq q_{*}-\lambda}U_{12}\right\rangle \\
 & \qquad+\E\left\langle A \indicator{L_{N},R_{12}\ge q_{*}-\kappa}(1-U_{12})\right\rangle +\E\left\langle A\indicator{L_{N},R_{12}\leq q_{*}-\kappa}(1-U_{12})\right\rangle \\
 & =I+II+III+IV.
\end{align*}
Note that $I=IV=0$ identically. It remains to estimate $II$ and
$III$.  

We start with $II$. Observe that
\[
II\leq2\E\left\langle U_{12}\left(\indicator{R_{12}\leq q_{*}-2a_{N-1}}\right)\right\rangle
\]
for $N$ large enough, which is bounded by $b_{N}'$ by  \prettyref{lem:Wnaregood}. 

Now to estimate $III$. Note that for $N$ sufficiently large, 
\[
\E\left\langle A(1-U_{12})\left(\indicator{R_{12}\geq q_{N-1}+2a_{N-1}}+\indicator{R_{12}\in[q_{*}-\kappa,q_{N}+2a_{N-1})}\right)\right\rangle _{\mu}\leq b_{N}'+(a).
\]
By the tilting lemma, 
\[
(a)\leq\norm{\tilde{K}}_{4}^{2}\cdot\left(\E\mu^{\tensor2}[q_{*}-2\kappa,q_{N-1}+a_{N-1})\right)^{1/2}.
\]
By the choice of $q_{N}$ and $a_{N}$ (see the first display in Theorem \ref{thm:pure-state-AU}), we have that
\[
\limsup\zeta_{N}[q_{*}-2\kappa,q_{N-1}+a_{N-1})=\limsup(\zeta_{N}[q_{*}-2\kappa,1]-\zeta_{N}[q_{N-1}+a_{N-1},1]) = 0.
\]
Thus combining these estimates and \prettyref{lem:(Localization)} we see that sending $N\to\infty$,
$\lambda\to0$ and then $\kappa\to0$ yields the result. 
\end{proof}

\section{Essential  uniqueness of clusters}\label{sec:Nesting}

In this section, we show that sets that satisfy the properties from \prettyref{thm:pure-state-AU}
with respect to $\mu$ are asymptotically unique. 

Let $\left\{ C_\alpha\right\} $ be constructed as in Theorem \ref{thm:pure-state-AU} for the measure $\mu_{N}$. Recall that they are labelled
in decreasing order, i.e., 
\[
\mu_{N}(C_\alpha)\geq\mu_{N}\left(C_{\alpha+1}\right).
\]
Let $a_{N},$ $b_{N}$, $m_N$, $q_{N}\to q_{*}$, and $\epsilon_{N}$ be
as in that theorem. Let $\{X_{\alpha}\}_{\alpha\in[m_N]}$ be another
collection of sets that satisfies items 1-5 of \prettyref{thm:pure-state-AU},
with constants $q_N',a_N',b_N'$ and $\epsilon_N'$ as in that theorem.

The main goal of this section is to prove that, the pure states $C_\alpha$ and
the sets $X_\alpha$ are effectively the same, as far as $\mu$ is concerned.
\begin{thm}[Essential uniqueness]\label{thm:nesting}
Suppose that we have
\begin{equation}\label{eq:overlapqn}
\zeta_{N}\left[(q_{N}'-a_{N}'),(q_{N}+a_{N})\right]+\zeta_{N}\left[(q_N-a_N),(q'_{N}+a'_{N})\right]\to0.
\end{equation}
 Then, for each $\alpha\in\N$, we have that 
\begin{equation}
\mu_{N}\left(C_\alpha\Delta X_\alpha\right)\to0\label{eq:nesting}
\end{equation}
in probability, where $\Delta$ denotes the symmetric difference.
\end{thm}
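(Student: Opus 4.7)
The plan is to first show that the equivalence relations induced by the two families agree up to a vanishing $\mu_N^{\otimes 2}$-mass, and then to match the partitions themselves. Define $\sigma \sim_C \tau$ iff $\sigma,\tau \in C_\alpha$ for some $\alpha$, and likewise $\sim_X$. If $\sigma^1 \sim_C \sigma^2$ but $\sigma^1 \not\sim_X \sigma^2$, then items 2 and 3 of \prettyref{thm:pure-state-AU}, applied to $\{C_\alpha\}$ and $\{X_\alpha\}$ respectively, confine $R_{12}$ to the interval $[q_N - 2a_N,\, q_N' + 2a_N']$ outside an event of expected $\mu_N^{\otimes 2}$-mass at most $b_N + b_N'$; the hypothesis \eqref{eq:overlapqn} then gives that the $\zeta_N$-mass of this interval vanishes. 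Treating $\sim_X \setminus \sim_C$ symmetrically and absorbing the dust via item 1, I conclude that $V_N := \mu_N^{\otimes 2}(\sim_C \Delta \sim_X)\to 0$ in probability.

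Next, set $p_{\alpha\beta} := \mu_N(C_\alpha \cap X_\beta)$ and observe
\[
\sum_\alpha \mu(C_\alpha)^2 - \sum_{\alpha,\beta}p_{\alpha\beta}^2 \;=\; \mu_N^{\otimes 2}(\sim_C \setminus \sim_X) \;\leq\; V_N,
\]
and similarly for the column counterpart, up to vanishing dust. Letting $\beta_N^*(\alpha) := \arg\max_\beta p_{\alpha\beta}$ and using $p_{\alpha\beta}\le p_{\alpha\beta_N^*(\alpha)}$ in the row identity, together with $2p_{\alpha\beta}\sum_{\alpha'\neq\alpha}p_{\alpha'\beta}\le \sum_{\alpha_1\neq\alpha_2} p_{\alpha_1\beta}p_{\alpha_2\beta}$ in the column identity, I deduce
\[
\mu(C_\alpha)\bigl(\mu(C_\alpha)-p_{\alpha\beta_N^*(\alpha)}\bigr) \;\leq\; V_N,\qquad p_{\alpha\beta_N^*(\alpha)}\bigl(\mu(X_{\beta_N^*(\alpha)})-p_{\alpha\beta_N^*(\alpha)}\bigr)\;\leq\; V_N.
\]
On $\{\mu(C_\alpha)\ge\delta\}\cap\{V_N\le\delta^2/8\}$ this forces $\mu(C_\alpha\,\Delta\, X_{\beta_N^*(\alpha)}) \leq CV_N/\delta$, and a symmetric argument shows that $\beta_N^*$ is asymptotically a bijection.

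Finally, I identify $\beta_N^*(\alpha)$ with $\alpha$ using item 5 of \prettyref{thm:pure-state-AU}: both $(\mu(C_\alpha))_\alpha$ and $(\mu(X_\beta))_\beta$ converge in distribution on $\cP_m$ to the same Poisson-Dirichlet law, whose atoms are almost surely strictly positive and pairwise distinct. Proceeding by induction on $\alpha$, assume $\beta_N^*(\gamma) = \gamma$ with probability $1 - o_N(1)$ for every $\gamma < \alpha$. By injectivity, both $\beta_N^*(\alpha)$ and $(\beta_N^*)^{-1}(\alpha)$ must lie in $\{\alpha,\alpha+1,\ldots\}$, and if $\beta_N^*(\alpha) \neq \alpha$ then both in fact exceed $\alpha$. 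In that case the matching and the monotonicity of the mass sequences yield $\mu(X_\alpha) \leq \mu(C_\alpha) + o(1) \leq \mu(X_{\alpha+1}) + o(1)$, forcing $\mu(X_\alpha) - \mu(X_{\alpha+1}) \to 0$ in probability; but this difference converges in distribution to $v_\alpha - v_{\alpha+1} > 0$ a.s., a contradiction. Hence $\beta_N^*(\alpha) = \alpha$ with probability tending to $1$, which establishes \eqref{eq:nesting}.

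The main subtlety lies in this last identification: marginal convergence of both weight sequences to the same Poisson-Dirichlet law does not on its own force the matching $\beta_N^*$ to be order-preserving, and one must combine the a.s. distinctness of the Poisson-Dirichlet atoms with the quantitative bounds from the second step, propagated inductively through the ranks, to pin $\beta_N^*$ to the identity.
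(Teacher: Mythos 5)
Your overall two-stage strategy --- first establish an approximate matching between the two partitions using the overlap bounds, then force the matching to be the identity via the ordering of masses together with the a.s.~distinctness and positivity of Poisson--Dirichlet atoms --- is the same strategy the paper uses, and your final induction (which the paper packages as a separate ``gap'' lemma on the PD process) is essentially a correct re-derivation of the paper's. The bijective-matching argument via the $p_{\alpha\beta}$ identities is a nice clean alternative to the paper's per-cluster maps $\pi_1,\pi_2$ defined by a mass threshold.

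However, there is a genuine gap in the first step. You claim that $\mu_N^{\otimes 2}$ confines $R_{12}$ to $[q_N-2a_N,\,q_N'+2a_N']$ on $\{\sim_C\}\setminus\{\sim_X\}$ ``outside an event of expected mass at most $b_N+b_N'$.'' Items 2 and 3 of Theorem~\ref{thm:pure-state-AU} are \emph{per-cluster} and \emph{per-pair} statements, not total bounds. Summing item~2 over the $m_N$ clusters $C_\alpha$ gives a bound $m_N b_N$ for the low-overlap part, and summing item~3 over \emph{pairs} $\beta_1\neq\beta_2$ of $X$-clusters gives a bound of order $m_N^2 b_N'$ for the high-overlap part. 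The theorem only supplies $m_N b_N^\gamma\to 0$ for \emph{some} $\gamma\le 1$, which gives $m_N(b_N+b_N')\to 0$ but does not, by itself, give $m_N^2 b_N'\to 0$. The paper sidesteps this by computing $\sum_{\ell\neq\pi_1(\alpha)}\mu_N(X_\alpha\cap C_\ell)$ with the $X$-index $\alpha$ \emph{fixed}, so only one sum over a cluster index ever appears; after dividing by $\mu_N(C_{\pi_1(\alpha)}\cap X_\alpha)\ge \epsilon/(2M)$ the resulting error is $\frac{2M}{\epsilon}\bigl(m_N(b_N+b_N')+c_N\bigr)$, with $c_N$ controlled by the hypothesis~\eqref{eq:overlapqn} via Markov. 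Your integrated quantity $V_N=\mu_N^{\otimes 2}(\sim_C\Delta\sim_X)$ can still be shown to vanish, but not by naive summation; one route is to write $1-U_X$ (with $U_X=\sum_\beta\indicator{\sigma^1,\sigma^2\in X_\beta}$) and compare $\E\gibbs{U_X}\to\zeta(\{q_*\})$ against $\zeta_N[q_N'+a_N',1]\to\zeta(\{q_*\})$, which is essentially the argument in the proof of Lemma~\ref{lem:convergencetoPD}. As written, though, the bound $b_N+b_N'$ is not justified, and this is the load-bearing estimate for everything downstream.
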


As a corollary of this we get the following.
\begin{cor} Let $W_\alpha$ be as in \prettyref{lem:Wnaregood}. Then \prettyref{eq:overlapqn} 
holds.
In particular,
\begin{equation}
\mu_{N}\left(C_\alpha\Delta W_\alpha\right)\to0\label{eq:nesting-1}
\end{equation}
\label{cor:quasi}
in probability.
\end{cor}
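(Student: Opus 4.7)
Corollary \ref{cor:quasi} makes two claims: that the hypothesis \eqref{eq:overlapqn} of Theorem \ref{thm:nesting} is satisfied for the sets $W_\alpha$, and that as a consequence \eqref{eq:nesting-1} holds. The second is an immediate application of Theorem \ref{thm:nesting}, since Lemmas \ref{lem:Wnaregood} and \ref{lem:convergencetoPD} have already shown that $\{W_\alpha\}$ satisfies items 1--5 of Theorem \ref{thm:pure-state-AU} with parameters $(q_N', a_N', b_N', \epsilon_N') = (q_{N-1}, a_{N-1}, b_N^{1/4}, \epsilon_N^{1/4})$. The task therefore reduces to verifying \eqref{eq:overlapqn}.

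For that, the strategy is to sandwich both intervals in \eqref{eq:overlapqn} inside a single interval $[L_N, U_N]$ with $L_N = \min(q_N - a_N, q_N' - a_N')$ and $U_N = \max(q_N + a_N, q_N' + a_N')$, and to show $\zeta_N[L_N, U_N] \to 0$. The inputs are: item 4 of Theorem \ref{thm:pure-state-AU} for $\{C_\alpha\}$, giving $\zeta_N[q_N + a_N, 1] \geq \zeta\{q_*\} - b_N$; the analogous bound for $\{W_\alpha\}$ already noted inside the proof of Lemma \ref{lem:Wnaregood}, namely $\zeta_N[q_N' + a_N', 1] \geq \zeta\{q_*\} - b_N'$; and the gap bounds coming from items 2--3, which force $\zeta_N[q_N - a_N, q_N + a_N] = o(1)$ and $\zeta_N[q_N' - a_N', q_N' + a_N'] = o(1)$. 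Combined with weak convergence $\zeta_N \to \zeta$, the condition $q_{*} = \sup \mathrm{supp}(\zeta)$, and $q_N + a_N, q_N' + a_N' \uparrow q_*$, these lower bounds can be upgraded to $\zeta_N[q_N + a_N, 1], \zeta_N[q_N' + a_N', 1] \to \zeta\{q_*\}$, and, using the gap bounds, also $\zeta_N[q_N - a_N, 1], \zeta_N[q_N' - a_N', 1] \to \zeta\{q_*\}$.

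With these limits in hand, $\zeta_N[L_N, 1] \to \zeta\{q_*\}$ (being the larger of the two lower-endpoint masses) and $\zeta_N[U_N, 1] \to \zeta\{q_*\}$ (being the smaller of the two upper-endpoint masses), whence $\zeta_N[L_N, U_N] = \zeta_N[L_N, 1] - \zeta_N(U_N, 1] \to 0$, since $\zeta_N\{U_N\} = o(1)$. Because both intervals in \eqref{eq:overlapqn} are contained in $[L_N, U_N]$, \eqref{eq:overlapqn} follows. The main obstacle is the upgrade from the one-sided inequality in item 4 to exact asymptotic equality $\zeta_N[q_N + a_N, 1] \to \zeta\{q_*\}$; this step relies on the weak convergence of $\zeta_N$ together with the absence of $\zeta$-mass accumulating at $q_*$ from below, a standard feature of Parisi measures under assumption \eqref{eq:assumptiontop}. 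Everything else is routine bookkeeping.
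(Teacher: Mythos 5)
Your overall approach matches the paper's: reduce to verifying \eqref{eq:overlapqn}, invoke the two lower bounds $\zeta_N[q_N+a_N,1]\geq\zeta\{q_*\}-o_N(1)$ and $\zeta_N[q_N'+a_N',1]\geq\zeta\{q_*\}-o_N(1)$ (the paper's proof records exactly these two displays), and combine them with weak convergence $\zeta_N\to\zeta$ and $q_*=\sup\mathrm{supp}(\zeta)$ to squeeze the mass of the moving interval to zero. The paper compresses the final squeeze into a single ``this implies''; your write-up just spells out the bookkeeping.

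There is, however, one step that does not hold as stated: the claim that items 2--3 of \prettyref{thm:pure-state-AU} ``force $\zeta_N[q_N-a_N,q_N+a_N]=o(1)$.'' Item 2 controls the event $\{R_{12}\leq q_N-a_N\}$ for same-cluster pairs, and item 3 controls $\{R_{12}\geq q_N+a_N\}$ for distinct-cluster pairs; neither says anything about mass landing \emph{inside} $[q_N-a_N,q_N+a_N]$ (for example, two points in distinct clusters could have overlap anywhere below $q_N+a_N$, including in this window). So the ``gap bounds'' you invoke to upgrade $\zeta_N[q_N+a_N,1]\to\zeta\{q_*\}$ to $\zeta_N[q_N-a_N,1]\to\zeta\{q_*\}$ are not available from those items. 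The good news is that this detour is unnecessary: $q_N-a_N\to q_*$ as well, so the identical weak-convergence argument you already use for the right endpoint (pick $q<q_*$ with $\zeta\{q\}=0$ and $\zeta[q,1]<\zeta\{q_*\}+\eps$, use Portmanteau and monotonicity once $q_N-a_N>q$) gives $\limsup_N\zeta_N[q_N-a_N,1]\leq\zeta\{q_*\}$ directly, and monotonicity gives the matching $\liminf$ from the known lower bound at $q_N+a_N$. Replacing the incorrect gap-bound step by this observation recovers a correct proof. One further minor point: your remark that ``absence of $\zeta$-mass accumulating at $q_*$ from below'' is a special feature of Parisi measures under \eqref{eq:assumptiontop} is unneeded --- $\zeta[q_*-\delta,q_*)\to0$ as $\delta\to0$ holds for any finite Borel measure by continuity from above, with or without a jump at $q_*$.
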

\begin{proof} 
This follows by Lemma \ref{lem:Wnaregood} and Theorem \ref{thm:nesting} after recalling that 
\begin{align*}
\zeta_N[q_N'+a_N',1] &\geq \zeta[q_*] -o_N(1)\\
\zeta_N[q_N+a_N,1] &\geq \zeta[q_*] -o_N(1).
\end{align*} 
Indeed, this implies that
$$ \zeta_N[q_N'-a_N',q_N+a_N] = \zeta_{N}[q_{N-1} - a_{N-1}, q_{N} + a_{N}] \to 0. $$
The same argument holds 
for the second limit.
\end{proof}

The idea of the proof \prettyref{thm:nesting} is that the overlap properties of the sets $(X_\alpha)$ and $(C_\alpha)$
from  items 1-4 of \prettyref{thm:pure-state-AU} will imply that each of the first $n$ $(X_{\alpha})$'s will be supported by one the first $M$ 
$(C_\alpha)$'s for some $M$ large but fixed, and \emph{vice versa}. The ranking of the states and basic properties
of the Poisson-Dirichlet process will then imply  
that, in fact, for each $\alpha$, the sets $X_\alpha$ and $C_\alpha$ are actually supported by each other.

For this we will need the following three lemmas. Their proofs are deferred to  the end of this section and follow from properties of the Poisson-Dirichlet process. The first lemma says that there is not much mass in the the tail  of the collections $X_{\alpha}$ and $C_{\alpha}$.  

\begin{lem}
\label{lem:no-tails-clusters}For every $\epsilon>0$, there is an
$N_{0}(\epsilon)$ and $M(\epsilon)$ such that if 
\[
E_{N}(\epsilon)=\left\{ \mu_{N}\left(\cup_{\alpha\geq M(\epsilon)}X_\alpha\right)>\frac{\epsilon}{2}\right\} \cup\left\{ \mu_{N}\left(\cup_{\alpha\geq M(\epsilon)}C_\alpha\right)>\frac{\epsilon}{2}\right\} 
\]
then for $N\geq N_{0}(\epsilon)$, 
\[
\prob\left[E_{N}(\epsilon)\right]\leq\epsilon.
\]
\end{lem}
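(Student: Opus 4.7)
The plan is to reduce the statement to a tail estimate for the Poisson-Dirichlet weights. Since both collections $\{C_\alpha\}$ and $\{X_\alpha\}$ are assumed to satisfy items $1$--$5$ of \prettyref{thm:pure-state-AU} with respect to $\mu_N$, item $5$ gives that both sequences of masses
$$
(\mu_N(C_\alpha))_{\alpha\in\N}\quad\text{and}\quad(\mu_N(X_\alpha))_{\alpha\in\N}
$$
converge in distribution, on the space of mass partitions $\cP_m$, to a Poisson-Dirichlet sequence $(v_\alpha)_{\alpha\in\N}$ with parameter determined by $\zeta(\{q_*\})$. In particular $\sum_\alpha v_\alpha=1$ almost surely, so the tail $S_M:=\sum_{\alpha\geq M}v_\alpha$ converges to $0$ almost surely as $M\to\infty$.

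Given $\epsilon>0$, first choose $M=M(\epsilon)$ so that $\prob(S_M>\epsilon/4)\leq \epsilon/4$. The next step is to transfer this tail bound from the limit to $\mu_N$. Since the $X_\alpha$ are disjoint and $\mu_N\left((\cup_\alpha X_\alpha)^c\right)\leq \epsilon_N'\to 0$ by item $1$ of \prettyref{thm:pure-state-AU}, one has
$$
\mu_N\bigl(\cup_{\alpha\geq M}X_\alpha\bigr)
= 1 - \sum_{\alpha<M}\mu_N(X_\alpha) - \mu_N\bigl((\cup_\alpha X_\alpha)^c\bigr).
$$
The finite partial sum $\sum_{\alpha<M}\mu_N(X_\alpha)$ is a continuous functional of the ranked mass partition, so it converges in distribution to $\sum_{\alpha<M}v_\alpha$; combined with the fact that the dust term tends to $0$ in probability, the left-hand side converges in distribution to $S_M$. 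By the Portmanteau theorem, for all $N$ sufficiently large,
$$
\prob\!\left(\mu_N\bigl(\cup_{\alpha\geq M}X_\alpha\bigr)>\epsilon/2\right)\leq \prob(S_M>\epsilon/4)+\epsilon/4\leq \epsilon/2.
$$
The same argument applied to the $C_\alpha$ yields the analogous bound.

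Taking $N_0(\epsilon)$ to be the larger of the two thresholds and using the union bound to combine the two events in the definition of $E_N(\epsilon)$ gives $\prob[E_N(\epsilon)]\leq \epsilon$ for $N\geq N_0(\epsilon)$, as required.

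The only potentially delicate point is justifying the Poisson-Dirichlet limit for $(\mu_N(X_\alpha))$: this collection is only assumed to satisfy items $1$--$5$ of \prettyref{thm:pure-state-AU}, and item $5$ already states the required convergence, so once one accepts the hypothesis of the theorem there is no additional work. The argument for the dust term and the weak-convergence transfer are routine given the Portmanteau theorem and the almost-sure tail property of the Poisson-Dirichlet law.
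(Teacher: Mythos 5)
Your proposal is correct and follows essentially the same approach as the paper: both reduce the claim to a tail estimate for the limiting Poisson--Dirichlet weights (using item~5 of \prettyref{thm:pure-state-AU}), transfer it to finite $N$ via the Portmanteau theorem applied to a closed event in the space of mass partitions (exploiting that $\sum_\alpha v_\alpha=1$ to handle the dust), and combine the two pieces by a union/inclusion--exclusion bound. The paper phrases the closed event directly as $\{\sum_{\alpha\leq M}v_\alpha^N\leq 1-\epsilon/2\}$ rather than passing through your explicit decomposition identity, but this is only a presentational difference.
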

The second lemma says that, for any fixed $n$, the first $n$ states $(C_{k})$ and $(X_k)$ must have
non-negligible $\mu_{N}$ mass as $N$ goes to infinity.

\begin{lem}
\label{lem:non-empty-first-n}Fix $n\geq1$ and $\delta>0$. Let $F_{N}(n,\delta)$
be the event that 
\begin{align*}
\mu_{N}(X_{1}) & >\ldots>\mu_{N}(X_n)>\delta\\
\mu_{N}(C_{1}) & >\ldots>\mu_{N}(C_{n})>\delta,
\end{align*}
then there is a function $f_{1,n}$ satisfying $\lim_{\delta \to0}f_{1,n}(\delta)=0$
and an $N_{1}(n,\delta)$ such that for $N\geq N_{1}(n,\delta)$,
\[
\prob\left[F_{N}(n,\delta)\right]\geq1-f_{1}(\delta).
\]
\end{lem}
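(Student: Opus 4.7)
The plan is to reduce the claim to a straightforward property of the Poisson–Dirichlet process via the convergence of ranked masses. By item 5 of Theorem \ref{thm:pure-state-AU} (which both $\{C_\alpha\}$ and $\{X_\alpha\}$ satisfy by hypothesis), the ranked mass sequences
\[
(\mu_N(C_\alpha))_{\alpha\in\N} \quad\text{and}\quad (\mu_N(X_\alpha))_{\alpha\in\N}
\]
each converge in distribution on $\cP_m$ to a Poisson–Dirichlet mass partition $(v_\alpha)_{\alpha\in\N}$ with parameter $\zeta(\{q_*\})$ (or whatever parameter is specified in item 5). Hence the claim will follow once we show that under the limit law, the event
\[
G_n(\delta) := \{v_1 > v_2 > \cdots > v_n > \delta\}
\]
has probability tending to $1$ as $\delta\to 0$, and that $\partial G_n(\delta)$ is a null set for the limit law so weak convergence yields the correct probability for $F_N(n,\delta)$.

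First, I would record the two standard facts about the Poisson–Dirichlet process that we need: (i) the underlying Poisson point process on $(0,\infty)$ a.s.\ has infinitely many points and no repeated atoms, so after normalization and ranking, $v_1 > v_2 > \cdots > 0$ holds almost surely with all strict inequalities; and (ii) the marginal distribution of each $v_\alpha$ is absolutely continuous, so the boundary event $\{v_1 = v_2\} \cup \cdots \cup \{v_{n-1}=v_n\} \cup \{v_n = \delta\}$ has probability zero. These imply that $G_n(\delta)$ is a continuity set of the limit law, and that
\[
g(\delta) := 1 - \prob[G_n(\delta)] \xrightarrow[\delta\to 0]{} 0,
\]
since $G_n(\delta)$ increases to the almost sure event $\{v_1>\cdots>v_n>0\}$ as $\delta\downarrow 0$.

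Next, I would invoke weak convergence separately for the $C$- and $X$-sequences. The event $F_N(n,\delta)$ is the intersection of the analogous events for each collection, so by a union bound,
\[
\prob[F_N(n,\delta)^c] \leq \prob[\{\mu_N(C_1)>\cdots>\mu_N(C_n)>\delta\}^c] + \prob[\{\mu_N(X_1)>\cdots>\mu_N(X_n)>\delta\}^c].
\]
Each term on the right converges (using the Portmanteau theorem on the continuity set $G_n(\delta)$) to $g(\delta)$ as $N\to\infty$. Choosing $N_1(n,\delta)$ large enough that each term is at most $g(\delta)+\delta$, we can set
\[
f_{1,n}(\delta) := 2g(\delta) + 2\delta,
\]
which satisfies $f_{1,n}(\delta)\to 0$ as $\delta\to 0$.

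There is no serious obstacle, but the one place to be a little careful is verifying that the topology on $\cP_m$ used in item 5 of Theorem \ref{thm:pure-state-AU} is strong enough that the event $G_n(\delta)$, which depends only on the first $n$ coordinates, is indeed a continuity set. Since $\cP_m$ is typically endowed with the topology inherited from coordinate-wise convergence on $\ell^1$-ranked sequences, this is automatic once absolute continuity of the finite-dimensional marginals of $(v_1,\dots,v_n)$ is in hand, which is classical for the Poisson–Dirichlet process.
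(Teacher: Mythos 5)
Your proof is correct and follows precisely the route the paper indicates for this omitted proof (Portmanteau theorem plus elementary facts about the Poisson--Dirichlet process). The only small slip is that item 5 of Theorem~\ref{thm:pure-state-AU} specifies parameter $1-\zeta(\{q_*\})$ rather than $\zeta(\{q_*\})$, which is immaterial here since your argument uses only that the ranked PD weights are a.s.\ strictly decreasing and positive with atomless finite-dimensional marginals.
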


The last lemma concerns the gap between the masses of states.

\begin{lem}
\label{lem:non-empty-gap} Fix $\eta>0$ and $n\geq1$. Let 
\begin{align*}
I_{N}(\eta,n) & =\left\{ \mu_{N}(C_{i})-\mu_{N}(C_{i+1})>\eta\:\forall i\in[n-1]\right\} \\
 & \qquad\cap\left\{ \mu_{N}(X_{i})-\mu_{N}(X_{i+1})>\eta\:\forall i\in[n-1]\right\} .
\end{align*}
Then there is a function $f_{2}(\eta,n)$ and an $N_{2}(\eta,n)$, such
that for $N\geq N_{2}(\eta,n)$, 
\[
\prob\left(I_{N}(\eta,n)\right)\geq1-f_{2}(\eta,n),
\]
where for each $n$, $f_{2}(\eta,n)\to0$ as $\eta\to0$. 
\end{lem}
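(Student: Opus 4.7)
\noindent\textbf{Proof plan for \prettyref{lem:non-empty-gap}.}
My plan is to reduce this gap estimate to a classical fact about the Poisson--Dirichlet distribution. By item 5 of \prettyref{thm:pure-state-AU} (applied both to the collection $(C_\alpha)$ and to $(X_\alpha)$), the ranked mass partitions $\bigl(\mu_N(C_\alpha)\bigr)_{\alpha \in \N}$ and $\bigl(\mu_N(X_\alpha)\bigr)_{\alpha \in \N}$ each converge in distribution on $\cP_m$ to a common Poisson--Dirichlet random mass partition $(v_\alpha)_{\alpha \in \N}$. The key probabilistic input I will use is that this limit is continuous and its atoms are almost surely strictly decreasing; equivalently, for each fixed $n$, the random variable
\[
G_n := \min_{1 \leq i \leq n-1} (v_i - v_{i+1})
\]
is almost surely strictly positive.

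With this fact in hand, I would note that for each fixed $n$, the functional $\psi:(x_\alpha)_\alpha \mapsto \min_{1 \leq i \leq n-1} (x_i - x_{i+1})$ on $\cP_m$ depends on only the first $n$ coordinates and is therefore continuous in the ranked-weights topology. The continuous mapping theorem then gives
\[
\psi\bigl(\mu_N(C_\alpha)\bigr) \convdist G_n, \qquad \psi\bigl(\mu_N(X_\alpha)\bigr) \convdist G_n.
\]
Applying the Portmanteau theorem to the open half-line $(\eta,\infty)$ and a union bound over the two events defining $I_N(\eta,n)$ yields
\[
\liminf_{N \to \infty} \prob\bigl(I_N(\eta, n)\bigr) \geq 1 - 2\,\prob(G_n \leq \eta).
\]
I would then set $f_2(\eta,n) := 2\,\prob(G_n \leq \eta) + \kappa$ for an arbitrary small slack $\kappa > 0$, and choose $N_2(\eta,n)$ large enough that the Portmanteau $\liminf$ loss is bounded by $\kappa$. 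Since $G_n > 0$ almost surely, $\prob(G_n \leq \eta) \to 0$ as $\eta \to 0$, giving $f_2(\eta,n) \to 0$ as required.

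The main---and essentially the only substantive---obstacle is justifying the probabilistic input: that the limiting Poisson--Dirichlet law has continuous one-dimensional marginals, so that its ranked atoms are almost surely distinct with strictly positive gaps. This is a classical property (it can be read off directly from the Poisson point process construction, or equivalently from the absolute continuity of the marginals of the ranked jumps of a subordinator at a fixed time). Everything else is routine bookkeeping: continuity of a minimum of finitely many coordinate functions on $\cP_m$, a union bound over the $C_\alpha$ and $X_\alpha$ collections, and absorbing the Portmanteau slack into $f_2$. Thus the proof reduces to invoking the convergence asserted in \prettyref{thm:pure-state-AU} together with this standard fact.
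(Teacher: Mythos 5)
Your proof is correct, and it matches the approach the paper itself prescribes: the paper explicitly omits the proof of this lemma, stating only that it follows ``by elementary applications of the Portmanteau lemma combined with basic properties of the Poisson--Dirichlet process,'' and it proves only \prettyref{lem:no-tails-clusters} as a template. Your argument follows exactly that template: convergence of the ranked masses to a $PD(1-\zeta(\{q_*\}))$ partition (item 5 of \prettyref{thm:pure-state-AU} and \prettyref{lem:convergencetoPD}), continuity of the finite-coordinate functional $\psi$, Portmanteau applied to the open set $\{\psi>\eta\}$, a union bound over the $C$- and $X$-collections, and the classical fact that the ranked atoms of $PD(\theta)$ for $\theta\in(0,1)$ are a.s.\ positive and distinct (so $G_n>0$ a.s., and its law has no atom at zero, hence $\prob(G_n\leq\eta)\to0$).

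One small bookkeeping point: as written, $f_2(\eta,n)=2\,\prob(G_n\leq\eta)+\kappa$ depends on an auxiliary slack $\kappa$ that is not a function of $\eta$ and $n$. To match the statement exactly you should tie $\kappa$ to $\eta$, e.g.\ set $f_2(\eta,n):=2\,\prob(G_n\leq\eta)+\eta$ (and choose $N_2(\eta,n)$ so the Portmanteau loss is below $\eta$); this still vanishes as $\eta\to0$. It is also worth making explicit that the needed Poisson--Dirichlet input is two separate facts, both standard: the ranked atoms are a.s.\ pairwise distinct (no ties in the underlying Poisson point process), and each $v_n>0$ a.s.\ for $\theta\in(0,1)$; together these give $G_n>0$ a.s. With that, the argument is complete and is exactly what the authors had in mind.
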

Given $\varepsilon>0$, choose $\delta$, $\epsilon$,
and $\eta$ by combining \prettyref{lem:no-tails-clusters}-\ref{lem:non-empty-gap},
such that if 
\[
\cE_{N}(\epsilon,\delta,n,\eta):=E_{N}^{c}(\epsilon)\cap F_{N}(n,\delta)\cap I_{N}(\eta,n)\cap J_{N},
\] where
$J_{N}$ is the event that the conclusions of Theorem \ref{thm:pure-state-AU} hold
then 
\begin{equation}
\prob\left[\cE_{N} \right]>1-\varepsilon,\label{eq:var-epsilon}
\end{equation}
for all $N\geq N_{0}(\varepsilon)$.
\begin{proof}[\textbf{{Proof of \prettyref{thm:nesting}}}]
We want to show that for each $\rho>0,\varepsilon>0$ and $\alpha$, 
\begin{equation}
\prob\left(\mu_{N}\left(C_\alpha\Delta X_\alpha\right)>\rho\right)\leq\varepsilon.\label{eq:failure-to-nest}
\end{equation}
Fix $\rho,\varepsilon,$ and $\alpha$. Let $n>\alpha$. Let $N\geq N_{0}(\varepsilon/2)$ where $N_{0}$
is defined as in \eqref{eq:var-epsilon}. By \prettyref{eq:overlapqn} and Markov's inequality,
there is a $c_N\to0$ such that with probability $1-o_N(1)$,
\begin{equation}\label{eq:cn-def}
\mu^{\tensor 2}_N( R_{12}\in [q_N'-a_N',q_N+a_N] ) \leq c_N.
\end{equation}
Choose $N$ sufficiently
large that 
\[
\frac{2 M(\epsilon)}{\epsilon}( m_N(b_N+b_N')+c_N) +\epsilon_N<\frac{\rho\wedge\eta\wedge\epsilon}{2}
\]
where $\epsilon ,\eta$ are defined as above 
We can do this since by assumption,
\[
(b_{N}'+b_{N})\cdot m_{N}=o_{N}(1).
\]
We will prove shortly that on ${\cE}_{N}$, for 
\[
\iota_N = \frac{2 M(\epsilon)}{\epsilon}( m_N(b_N+b_N')+c_N) +\epsilon_N,
\]
we have that
\begin{align}
\mu_{N}\left(C_{\alpha}\backslash X_\alpha\right) & \leq\iota_N \nonumber, \\
\mu_{N}(X_{\alpha}\backslash C_{\alpha}) & \leq\iota_N\label{eq:nest-inequality}.
\end{align}
Note that \eqref{eq:nest-inequality} immediately implies \eqref{eq:failure-to-nest} as desired.
\end{proof}

\begin{proof}[Proof of  \eqref{eq:nest-inequality}]
 We begin by defining two maps $\pi_{1},\pi_{2}:[n]\to[M(\epsilon)]$.
On the event $\cE_{N}$, for each $i$, we let $\pi_{1}(i)$
be the first $j\in[M(\epsilon)]$ such that
\[
\mu_{N}\left(X_{i}\cap C_{\pi_{1}(i)}\right)\geq\frac{\epsilon}{2\cdot M(\epsilon)}
\]
holds and let $\pi_{2}(i)$ be the first $j\in[M(\epsilon)]$
such that 
\[
\mu_{N}\left(X_{\pi_{2}(i)}\cap C_{i}\right)\geq\frac{\epsilon}{2\cdot M(\epsilon)}
\]
holds. That such $j$ exist follows by definition of $\cE_{N}$.
On $\cE_{N}^{c}$, let $\pi_{1}=\pi_{2}=Id$. This provides two random
maps $\pi_{i}:[n]\to[M(\epsilon)]$, $i=1,2.$

Suppose for the moment that on ${\mathcal{E}}_{N}$,
\begin{align}
\mu_{N}\left(X_{i}\cap C_{\pi_{1}(i)}\right) & \geq\mu_{N}\left(X_{i}\right)-\iota_N \nonumber \\
\mu_{N}\left(C_{i}\cap X_{\pi_{2}(i)}\right) & \geq\mu_{N}\left(C_{i}\right)-\iota_N.\label{eq:strong-nesting-im-running-out-of-labels}
\end{align}
The inequality, \prettyref{eq:nest-inequality},
provided 
that $\pi_{1}=\pi_{2}=Id$ on $\mathcal{{E}}_{N}$.
Let us first show that these maps are the identity map given
\prettyref{eq:strong-nesting-im-running-out-of-labels}. We then prove \prettyref{eq:strong-nesting-im-running-out-of-labels}.

The proof that these maps are the identity map is by induction. Suppose first that $\pi_{2}(1)=1.$
If $\pi_{1}(1)>1$, then by \prettyref{eq:strong-nesting-im-running-out-of-labels},
\begin{align*}
\mu_{N}(C_{1}) & \leq\mu_{N}(X_{1})+\iota_{N}\\
 & \leq\mu_{N}\left(C_{\pi_{1}(1)}\right)+2\iota_{N}\leq\mu_{N}\left(C_{2}\right)+2\iota_{N}.
\end{align*}
This implies that
\[
\mu_{N}\left(C_{1}\right)-\mu_{N}\left(C_{2}\right)\leq2\iota_{N}.
\]
Since $\iota_{N}\to0$, this contradicts the definition of $\cE_{N}$. 
By symmetry, the same argument works if $\pi_1(1)=1$ and $\pi_2(1)>1$.

Now assume that $\pi_{2}(1)>1$ and $\pi_{1}(1)>1$. By the ordering
of these sets, 
\begin{align*}
\mu_{N}\left(C_{1}\right) & \leq\mu_{N}\left(X_{\pi_{2}(1)}\right)+\iota_{N}\leq\mu_{N}\left(X_{1}\right)+\iota_{N}\\
 & \leq\mu_{N}\left(C_{\pi_{1}(1)}\right)+2\iota_{N}.
\end{align*}
This is, again, a contradiction. Thus $\pi_{1}(1)=1=\pi_{2}(1).$

Assume now that $\pi_{1}(i)=\pi_{2}(i)=i$ for all $i\in[k-1]$. By
the same reasoning as in the base case, if $\pi_{2}(k)\neq k$, then
it must be that $\pi_{2}(k)<k$. This, however, implies that
\[
\mu_{N}\left(C_{k}\right)\leq\mu_{N}\left(X_{\pi_{2}(k)}\backslash C_{\pi_{2}(k)}\right)+\iota_N.
\]
But 
\begin{align*}
\mu_{N}\left(X_{\pi_{2}(k)}\backslash C_{\pi_{2}(k)}\right) & \leq\mu_{N}\left(X_{\pi_{2}(k)}\right)-\mu_{N}\left(X_{\pi_{2}(k)}\cap C_{\pi_{2}(k)}\right) \leq\iota_{N},
\end{align*}
where we used the induction hypothesis in the last inequality. This implies that eventually $\mu C_\alpha \leq 2\iota_N$. This
is, again, a contradiction since on $\cE_N$, $\mu C_\alpha>\epsilon$. Thus, assuming \prettyref{eq:strong-nesting-im-running-out-of-labels},
we have that $\pi_{1}=\pi_{2}=Id$ by induction.

We now prove \prettyref{eq:strong-nesting-im-running-out-of-labels}
on the event $\cE_{N}$. Fix $\alpha\in[n]$ . We know that
on this event,
\[
\mu_{N}\left(X_{{\alpha}}\cap C_{\pi_{1}(\alpha)}\right)\geq\frac{\epsilon}{2M(\epsilon)}.
\]
Now let $\ell\neq\pi_{1}(\alpha)$. Write
\begin{align*}
\mu_{N}\left(X_\alpha\cap C_{\ell}\right)
 & \color{red}=\color{black}\frac{1}{\mu_{N}\left(C_{\pi_{1}(\alpha)}\cap X_\alpha\right)}\mu^{\tensor2}_{N}\left(\sigma^{1}\in C_{\pi_{1}(\alpha)}\cap X_\alpha,\sigma^{2}\in C_{\ell}\cap X_\alpha\right).
\end{align*}
Write the event $\{R_{12}\in[-1,1]\}$ as 
\begin{align*}
\left\{ R_{12}\leq q_{N}'-a_{N}'\right\} \cup\left\{ R_{12}\geq q_{N}+a_{N}\right\}  & \cup\left\{ q_{N}'-a_{N}'<R_{12}<q_{N}+a_{N}\right\} \\
=I\cup II\cup III.
\end{align*}
Note that since we are in the event $J_{N}$,
\[
\mu^{\tensor2}_{N}\left(\sigma^{1}\in C_{\pi_{1}(\alpha)}\cap X_\alpha,\sigma^{2}\in C_{\ell}\cap X_\alpha,I\right)\leq\mu^{\tensor2}_{N}\left(\sigma^{1},\sigma^{2}\in X_\alpha,I\right)\leq b_{N}',
\]
while 
\[
\mu^{\tensor2}_{N}\left(\sigma^{1}\in C_{\pi_{1}(\alpha)}\cap X_\alpha,\sigma^{2}\in C_{\ell}\cap X_\alpha,II\right)\leq b_{N}.
\]

Summing on $\ell$ and using \eqref{eq:cn-def}, we see that 
\[
\sum_{\ell\neq \alpha}\mu_{N}\left(X_\alpha\cap C_{\ell}\right)\leq\frac{1}{\mu\left(C_{\pi_{1}(\alpha)}\cap X_\alpha\right)}\left(m_{N}(b'_{N}+b_{N})+c_N\right)\leq\frac{2M}{\epsilon}\cdot \left( m_{N}(b_{N}'+b_{N})+c_N\right).
\]
This implies the first inequality of \eqref{eq:strong-nesting-im-running-out-of-labels} after recalling that $\{C_{\ell}\}$ (almost) partitions
$\Sigma_{N}$ and that $$\mu_{N}\left(X_\alpha \cap (\cup_{\alpha} C_\alpha)^{c}\right) \leq \epsilon_{N},$$ by assumption.
By symmetry, the same argument shows the second inequality holds as well.
\end{proof}

\subsection{Propositions regarding the Poisson-Dirichlet process}

The proofs of Lemmas \ref{lem:no-tails-clusters}--\ref{lem:non-empty-gap}
follow by elementary applications of the Portmanteau lemma
combined with basic properties of the Poisson--Dirichlet process. 
For the reader's convenience we prove \prettyref{lem:no-tails-clusters}. 
The proofs of \prettyref{lem:non-empty-first-n} and \prettyref{lem:non-empty-gap} are omitted.

\begin{proof}[\textbf{{Proof of \prettyref{lem:no-tails-clusters}}}]

Fix $\epsilon>0$. Let $(v_{n})$ be $PD(1-\zeta(q_{*}))$. Let
$M(\epsilon)$ be such that 
\[
\prob\left(\sum_{\alpha\geq M(\epsilon)}v_{\alpha}\geq\frac{\epsilon}{2}\right)\leq \frac{\epsilon}{4}.
\]
Recall that $\left(\mu_{N}(X_{\alpha})\right)\to(v_{n})$ by Lemma \ref{lem:convergencetoPD}.
For $(v_\alpha^N)$, this event is contained in the closed event (in the topology of mass partitions) 
\[
\left\{ \sum_{\alpha\leq M} v_\alpha^N \leq 1-\epsilon/2\right\},
\]
and for $v_\alpha$ these events are equal. Thus we
have that for $N$ sufficiently large 
\[
\prob\left(\mu_{N}\left(\cup_{\alpha>M(\epsilon)}X_\alpha\right)\geq \frac{\epsilon}{2}\right)\leq \frac{\epsilon}{2},
\]
by the Portmanteau theorem. The same argument applies to the $C_\alpha$.
Intersecting these events yields the result by the inclusion-exclusion
principle.
\end{proof}

\section{TAP equation for a fixed coordinate}\label{sec:tap-fixed-coordinate}
In this section we turn to the proof of Theorem \ref{thm:TAP-finite-N}. For the reader's convenience, 
let us briefly recap where we are and our plan of attack. 
Recall the construction of the states $C_\alpha$ from Theorem \ref{thm:pure-state-AU} and the definition of $\langle \cdot \rangle_{\alpha}$. In Section \ref{sec:stability} we constructed another collection of pure states $W_\alpha \subseteq \Sigma_{N}$ for the measure $\mu_{N}$. 
As shown in Section \ref{sec:Nesting}, the sets  $C_\alpha$ and $W_\alpha$ are essentially the same 
in each other. The advantage of working with  $W_{\alpha}$ lies in the fact that they are rearrangements of lifts of pure states of the measure $\mu_{N-1}'$. This will allow us to avoid the first obstruction explained in \prettyref{sec:outline}: the measure $\mu_{N-1}'$ is now independent of the local field $y_{N}$. The rearrangement, however, is not independent of $y_{N}$. In particular the correlation between $W_\alpha$ and $y$ is through the map $\pi_N$ which takes $W^\dag_\alpha$ to $W^\dag_{\pi(\alpha)}=W_{\alpha}$.

To circumvent this obstruction we make the following observation. The measure $\mu$ conditioned on the set $W^\dag_\alpha$, is
essentially the measure $(\mu')^\intercal$ conditioned on $\tilde{W}_\alpha$. This  will allow us to conclude
that \eqref{eq:TAPmixed} holds by an application of Corollary \ref{cor:Cavity-TAP-finite-N}, provided the rearrangement map $\pi_N$ is not too wild.
In particular, provided the map $\mu'\mapsto(\mu')^\intercal$ does not
"charge the dust at infinity", the result will follow as a consequence of the following basic fact.

\begin{lem}\label{lem:rearrangement-of-sequences}
Let $X^{N}$ be a sequence of $[-2,2]^{\N}$-valued random variables
such that 
\[
X^{N}\stackrel{(d)}{\longrightarrow}0.
\]
 Let $p_{N}$ a sequence of $S_{\infty}-$valued random variables
that satisfy the tightness criterion
\begin{equation}\label{eq:tightness}
\varlimsup_{M\to\infty}\varlimsup_{N\to\infty}P(p_{N}(n)\geq M)=0\quad\forall n.
\end{equation}
Then if $Y^{N}=\left(X_{p_N(n)}^{N}\right)$, we have 
\[
Y^{N}\stackrel{(d)}{\longrightarrow}0.
\]
\end{lem}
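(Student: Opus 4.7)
The plan is to reduce convergence in distribution of $Y^N$ on $[-2,2]^{\N}$ (with the product topology) to convergence of finite-dimensional marginals, and then to use the tightness hypothesis to localize the rearrangement $p_N$ to a finite set of indices on which $X^N\to 0$ in probability. Since the limit is deterministic, convergence in distribution of $Y^N$ to $0$ is equivalent to the statement that for every fixed $n\geq 1$, $(Y_1^N,\ldots,Y_n^N)\to 0$ in probability in $[-2,2]^n$.

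Fix $n\geq 1$ and $\epsilon,\delta>0$. First I would apply the tightness hypothesis \eqref{eq:tightness} coordinate by coordinate: for each $i\leq n$ choose $M_i$ with $\varlimsup_N P(p_N(i)>M_i)<\delta/(2n)$, and set $M=\max_{i\leq n}M_i$. A union bound then yields
\[
\varlimsup_N P\bigl(\max_{i\leq n}p_N(i)>M\bigr)<\delta/2.
\]
Next, since $X^N\to 0$ in distribution on $[-2,2]^{\N}$, every finite-dimensional marginal converges in probability to $0$; in particular $P(\max_{j\leq M}|X_j^N|>\epsilon)\to 0$. On the intersection of the events $\{\max_{i\leq n}p_N(i)\leq M\}$ and $\{\max_{j\leq M}|X_j^N|\leq \epsilon\}$, each $Y_i^N=X_{p_N(i)}^N$ has index $p_N(i)\in\{1,\ldots,M\}$ and hence $\max_{i\leq n}|Y_i^N|\leq\epsilon$. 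Combining the two displays with a union bound gives $\varlimsup_N P(\max_{i\leq n}|Y_i^N|>\epsilon)<\delta$, and letting $\delta\to 0$ finishes the argument.

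There is no real obstacle here. The only care needed is in the quantifier order of \eqref{eq:tightness}: the hypothesis is given pointwise in $n$, but since we only ever need to control the first $n$ values of $p_N$ for a fixed $n$, a union bound over these finitely many indices allows us to choose a single cutoff $M$ that works uniformly. No additional properties of $S_\infty$ (beyond injectivity, which is irrelevant here) or of the sequence $X^N$ beyond convergence of finite marginals are needed.
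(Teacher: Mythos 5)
Your proof is correct. The paper states this lemma without proof, calling it a ``basic fact,'' so there is nothing to compare against; your argument --- reducing convergence in distribution to the deterministic limit $0$ to finite-dimensional convergence in probability, then using the tightness of $p_N$ to confine the first $n$ rearranged indices to a finite window $[1,M]$ on which $X^N$ is uniformly small in probability, and finishing with a union bound --- is the natural way to fill that gap, and your side observation that injectivity of $p_N$ plays no role is also accurate.
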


We begin this section by proving the tightness of the sequence $\pi_N$. The main result will then essentially be immediate,
and is proved in the following subsection.

\subsection{Tightness of the reshuffling}

We begin this section by studying the random permutation $\pi:\N\to\N$ as 
defined in Section \ref{sec:stability} by
 $$W_{\alpha,N}=W_{\pi_N(\beta),N}^{\dagger}.$$ We recall its dependence on $N$ by writing $\pi_{N}$ instead of just $\pi$.
We now show tightness for the sequence $\pi_{N}$.
\begin{lem}[Tightness]\label{lem:tightness}
 We have that for each $n\in\N$, 
\[
\varlimsup_{M\to\infty}\varlimsup_{N\to\infty}P(\pi_N(n)\geq M)=0.
\]
\end{lem}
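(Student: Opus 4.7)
The plan is to compare, via the Tilting Lemma (Lemma \ref{lem:TL}), the $\mu_N$-masses of the lifted clusters $W^\dagger_{\alpha,N} = \Sigma_1 \times \tilde W_{\alpha,N-1}$ with the $\mu'_N$-masses of their projections $\tilde W_{\alpha,N-1}$. Set $u^N_\alpha := \mu_N(W^\dagger_{\alpha,N})$, $v^N_\alpha := \mu'_N(\tilde W_{\alpha,N-1})$, and $w^N_\alpha := \mu_N(W_{\alpha,N})$. By construction, $(v^N_\alpha)_\alpha$ is nonincreasing (Theorem \ref{thm:pure-state-AU} applied to $\mu'_N$), while $(w^N_\alpha)_\alpha$ is the decreasing rearrangement of $(u^N_\alpha)_\alpha$ realized by $\pi_N$, so that $w^N_\alpha = u^N_{\pi_N(\alpha)}$. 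The second part of the Tilting Lemma, applied simultaneously in $\alpha$, gives on an event of probability $1-o_N(1)$,
\begin{equation*}
u^N_\alpha \leq \tilde K(\mu'_N)\bigl(1 + C/\sqrt{N}\bigr)\sqrt{v^N_\alpha} \qquad \text{for every } \alpha \in \N.
\end{equation*}

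I would then collect three asymptotic inputs. First, item 5 of Theorem \ref{thm:pure-state-AU} applied to $\mu'_N$ gives that $(v^N_\alpha)_\alpha$ converges in distribution to a Poisson-Dirichlet mass partition $(v_\alpha)$, so $v^N_M \convdist v_M$ for each $M$, and $v_M \to 0$ almost surely as $M\to\infty$. Second, Lemma \ref{lem:convergencetoPD} gives that $(w^N_\alpha)_\alpha$ converges in distribution to a PD mass partition, so $w^N_n$ is bounded below in probability by some $\delta_n > 0$. Third, Lemma \ref{lem:(Localization)} supplies tightness of $\tilde K(\mu'_N)$.

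The conclusion then follows from a deterministic comparison. Since $(v^N_\alpha)$ is nonincreasing, $\sup_{\alpha \geq M} u^N_\alpha \leq \tilde K(\mu'_N)(1 + o_N(1))\sqrt{v^N_M}$. On the event $\{\pi_N(n) \geq M\}$, the definition of $\pi_N$ forces $w^N_n = u^N_{\pi_N(n)} \leq \sup_{\alpha \geq M} u^N_\alpha$, so
\begin{equation*}
\prob\bigl(\pi_N(n) \geq M\bigr) \leq \prob\!\left(w^N_n \leq \tilde K(\mu'_N)(1 + o_N(1))\sqrt{v^N_M}\right) + o_N(1).
\end{equation*}
Sending first $N\to\infty$ and then $M\to\infty$ and combining the three inputs above makes the right-hand side vanish, yielding the claimed tightness.

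The main subtlety I expect is that the Tilting Lemma yields only a Cauchy--Schwarz-type bound $u^N_\alpha \leq (\mathrm{const})\sqrt{v^N_\alpha}$ rather than a linear comparison, so the argument requires pointwise decay $v^N_M \to 0$ with $M$; however this is provided by the Poisson--Dirichlet limit of $(v^N_\alpha)$, and the tightness of $\tilde K(\mu'_N)$ absorbs the prefactor, so no sharper control on the tilt is needed.
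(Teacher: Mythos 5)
Your proposal is correct and takes essentially the same route as the paper: transfer the $\mu_N$-masses of the lifted clusters to $\mu'_N$-masses via the Tilting Lemma, absorb the random prefactor $\tilde K(\mu')$ with the Localization Lemma, and close via the Portmanteau theorem together with the Poisson--Dirichlet limits of the cluster weights. Your organization is slightly streamlined — comparing the ranked weight $w^N_n$ directly against $\tilde K(\mu')\sqrt{v^N_M}$ and invoking Lemma \ref{lem:convergencetoPD} for the lower bound, rather than the paper's split into the two tail events $\{\mu(W^\dagger_l)\leq\epsilon\}$ and $\{\mu(\cup_{k\geq M}W^\dagger_k)\geq\epsilon\}$ with both halves of the Tilting Lemma — but the ingredients and the structure of the estimate are the same.
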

\begin{proof}
Take $N$ sufficiently large that $n\leq m_{N}$. Now observe that
\begin{align*}
P(\pi_N(n)\geq M) & =P\left(\exists k\geq M:\pi_{N}(k)=n\right)\\
 & =P(\exists l\leq n,k\geq M:\mu(W_{l}^{\dagger})\leq\mu(W_{k}^{\dagger}))\\
 & \leq\sum_{l=1}^{n}P(\exists k\geq M:\mu(W^\dagger_l)\leq\mu(W^\dagger_k))\\
 & \leq\sum_{l=1}^{n}P(\mu(W^\dagger_l)\leq\mu(\cup_{k\geq M}W^\dagger_k))
\end{align*}
It thus suffices to prove this limit for each summand.

Now observe that for each $l\in[n]$ and each $\epsilon>0$, the summand
satisfies the inequality,
\[
P(\mu(W^\dagger_l)\leq\mu(\cup_{k\geq M}W^\dagger_k)) \leq P(\mu(W^\dagger_l)\leq\epsilon)+P\left(\mu\left(\cup_{k\geq M}W^\dagger_k\right)\geq\epsilon\right)=I+II.
\]
We now bound $I$. Observe that by Lemma \ref{lem:TL}, 
\[
\mu\left(W^\dagger_l\right)\left\langle \cosh(y)\right\rangle '\geq(1-\frac{C}{\sqrt{N}})\mu'(\tilde{W}_{l})
\]
with high probability. Thus $I$ is bounded by 
\begin{align*}
I & \leq P(\mu'(\tilde{W}_{l})\leq2\epsilon\cdot L)+P(\left\langle \cosh(y)\right\rangle '\geq L)+o_{N}(1)\\
 & \leq P(\mu'(\tilde{W}_{l})\leq2\epsilon\cdot L)+\frac{C(\xi')}{L}+o_{N}(1),
\end{align*}
for each $L\geq1$, where we have applied the localization lemma (Lemma \ref{lem:(Localization)}) in
the second inequality. 

We now turn to $II$. Observe that again by Lemma \ref{lem:TL},
with high probability, 
\[
\mu\left(\cup_{k\geq M}W^\dagger_k\right)\leq(1+\frac{C}{\sqrt{N}})\tilde{K}(\mu')\sqrt{\mu'(\cup_{k\geq M}\tilde{W}_{k})}.
\]
Thus for $N$ sufficiently large, 
\begin{align*}
II & \leq P\left(2\tilde{K}(\mu')\sqrt{\mu'(\cup_{k\geq M}\tilde{W}_{k})}\geq\epsilon\right)+o_{N}(1)\\
 & \leq P(\mu'\left(\cup_{k\geq M}\tilde{W}_{k}\right)\geq\frac{\epsilon^{2}}{4L^{2}})+P\left(\tilde{K}(\mu')\geq L\right)+o_{N}(1)\\
 & =P(\mu'\left(\cup_{k\geq M}\tilde{W}_{k}\right)\geq\frac{\epsilon^{2}}{4L^{2}})+\frac{C(\xi')}{L}+o_{N}(1)
\end{align*}
where again in the last step we used  Lemma \ref{lem:(Localization)}. Denoting 
\[
\mu'(\tilde W_{k})=v_{k}^{N},
\]
we can write the above as 
\[
I+II\leq P(v_{l}^{N}\leq2\epsilon\cdot L)+P\left(\sum_{k\leq M}v_{k}^{N}\leq1-\frac{\epsilon^{2}}{4L^{2}}\right)+\frac{C}{L}+o_{N}(1).
\]
Observe that the sets in the first two terms are closed in $\cP_{m}$.
Thus by the Portmanteau theorem and the fact that $(v_{l}^{N})\to(v_{l})$
in law on $\cP_{m}$ where $(v_{l})$ are $PD(\theta)$ with $\theta=1-\zeta(\{q_{*}\})$,
we have that 
\[
\varlimsup_{N}I+II\leq P(v_{l}\leq2\epsilon\cdot L)+P\left(\sum_{k\geq M}v_{k}\geq\frac{\epsilon^{2}}{4L^{2}}\right)+\frac{C}{L}.
\]
We used here that for the Poisson-Dirichlet distribution $\sum v_k=1$. 

The Poisson-Dirichlet distribution satisfies  
\[
\E\sum_{k\geq M}v_{k}\leq f(M,\theta)
\]
where $f\to0$ as $M\to\infty$. In particular, by Markov's inequality
we have 
\[
P\left(\sum_{k\geq M}v_{k}\geq\frac{\epsilon^{2}}{4L^{2}}\right)\leq\frac{4L^{2}}{\epsilon^{2}}f(M,\theta).
\]
Thus combining the above we have that 
\[
\varlimsup_{M\to\infty}\varlimsup_{N\to\infty}P(\pi_N(n)\geq M)\leq nP(v_{n}\leq2\epsilon\cdot L)+n\frac{C}{L},
\]
where we have used here that $v_{n}<v_{k}$ for $k<n$. Sending $\epsilon\to0$
and then $L\to\infty$ and using the fact that $P(v_{n}=0)=0$, yields
the result. 
\end{proof}

\subsection{Proof of Theorem \ref{thm:TAP-finite-N} }
Recall the notation 
\[
\gibbs{\cdot}^\intercal_{\alpha,N}=(\mu')^\intercal_N\left(\cdot \vert \tilde{W}_\alpha\right)
\]
from \prettyref{sec:cavity} and recall that $\langle \cdot \rangle_{\alpha,N} = \mu_{N}( \cdot | C_{\alpha})$. 
We begin by stating the following two lemmas whose proofs we will
defer to the end of the section.
\begin{lem}
\label{lem:s-to-sbar}For every $\alpha\in\N$, 
\[
\abs{\left\langle \sigma_{1}\right\rangle _{\alpha,N}-{\left\langle \sigma_{1}\right\rangle }^\intercal_{\pi_N(\alpha),N}}\to0
\]
in probability as $N\to\infty.$
\end{lem}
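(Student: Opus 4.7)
The plan is to carry out two successive reductions. First replace the cluster $C_\alpha$ appearing in $\gibbs{\sigma_1}_{\alpha,N}$ by the cluster $W_\alpha$, which is legitimate by the essential uniqueness established in Corollary \ref{cor:quasi}. Then use the product structure $W_\alpha = \Sigma_1 \times \tilde W_{\pi_N(\alpha)}$ together with the Hamiltonian decomposition \eqref{eq:ll} to recognize the resulting average as $\gibbs{\sigma_1}^\intercal_{\pi_N(\alpha),N}$ up to an $O(N^{-1/2})$ residual error coming from the $r$-term in \eqref{eq:ll}.

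For the first reduction, fix $\varepsilon > 0$. By Lemma \ref{lem:convergencetoPD}, the sequence $(\mu_N(C_\alpha))_\alpha$ converges in law to a Poisson--Dirichlet partition whose atoms are a.s.\ positive, and likewise for $(\mu_N(W_\alpha))_\alpha$. Hence there is $\eta > 0$ such that $\{\mu_N(C_\alpha) \wedge \mu_N(W_\alpha) \geq \eta\}$ has probability $\geq 1 - \varepsilon$ for all $N$ large. On this event, since $|\sigma_1| \leq 1$,
\[
\left| \gibbs{\sigma_1}_{\alpha,N} - \frac{1}{\mu_N(W_\alpha)} \int_{W_\alpha} \sigma_1\, d\mu_N \right| \leq \frac{2}{\eta}\, \mu_N(C_\alpha \Delta W_\alpha),
\]
which vanishes in probability by Corollary \ref{cor:quasi}. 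Writing $\beta = \pi_N(\alpha)$, the decomposition \eqref{eq:ll} combined with $H'(\rho) = \tilde H(\rho) + r(+1,\rho)$ gives $e^{H_N(+1,\rho)} = e^{H'(\rho) + y(\rho)}$ and $e^{H_N(-1,\rho)} = e^{H'(\rho) - y(\rho) + \Delta(\rho)}$, where $\Delta(\rho) := r(-1,\rho) - r(+1,\rho)$ satisfies $\max_\rho |\Delta(\rho)| = O(N^{-1/2})$ with probability $1 - o_N(1)$ by Lemma \ref{lem:decomposition-lemma}. Substituting into $\mu_N(\{s\} \times \tilde W_\beta)$ and summing against $\sigma_1 = s$ yields
\[
\frac{1}{\mu_N(W_\alpha)} \int_{W_\alpha} \sigma_1\, d\mu_N \;=\; \frac{\int_{\tilde W_\beta} (e^{y} - e^{-y + \Delta})\, d\mu'_N}{\int_{\tilde W_\beta} (e^{y} + e^{-y + \Delta})\, d\mu'_N},
\]
which should be compared with $\gibbs{\sigma_1}^\intercal_{\beta,N} = \int_{\tilde W_\beta} \sinh(y)\, d\mu'_N / \int_{\tilde W_\beta} \cosh(y)\, d\mu'_N$ obtained from \eqref{eq:nu-alpha-T}. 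Since $|e^{\Delta(\rho)} - 1| = O(N^{-1/2})$ uniformly in $\rho$, both numerators and both denominators differ by at most $C\, N^{-1/2} \int_{\tilde W_\beta} \cosh(y)\, d\mu'_N$, so the two ratios differ by $O(N^{-1/2})$ as soon as the denominator is bounded away from zero.

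The main technical obstacle is obtaining such a uniform lower bound on $\int_{\tilde W_\beta} \cosh(y)\, d\mu'_N \geq \mu'_N(\tilde W_\beta)$, given that $\beta = \pi_N(\alpha)$ fluctuates with $N$. The Tilting Lemma \ref{lem:TL} applied to $A = \tilde W_\beta$ gives
\[
\eta \;\leq\; \mu_N(W_\alpha) \;=\; \mu_N(\Sigma_1 \times \tilde W_\beta) \;\leq\; \left( 1 + \tfrac{C}{\sqrt N} \right) \tilde K(\mu') \sqrt{\mu'_N(\tilde W_\beta)},
\]
and the localization bound (Lemma \ref{lem:(Localization)}) ensures $\tilde K(\mu') \leq L$ on an event of probability $\geq 1 - \varepsilon$ for $L$ large. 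On the intersection of these events one obtains $\mu'_N(\tilde W_\beta) \geq \eta^2/(4L^2)$; combining with the previous paragraph, the difference $|\gibbs{\sigma_1}_{\alpha,N} - \gibbs{\sigma_1}^\intercal_{\beta,N}|$ is $O(L^2 / (\eta^2 \sqrt N)) \to 0$. Letting $N \to \infty$ and then $\varepsilon \to 0$ concludes the proof.
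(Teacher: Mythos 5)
Your proposal is correct and follows the same two-step strategy as the paper's proof: first replace $C_\alpha$ by $W_\alpha$ via essential uniqueness (Corollary \ref{cor:quasi}), then use the product structure $W_\alpha=\Sigma_1\times\tilde W_{\pi_N(\alpha)}$ together with the Hamiltonian decomposition \eqref{eq:ll} to compare the resulting conditional averages. Your second paragraph usefully unpacks the paper's terse appeal to the Tilting Lemma at that step.

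One remark: the third paragraph is not actually needed. You already observed that $|N_1-N_2|$ and $|D_1-D_2|$ are each at most $CN^{-1/2}\int_{\tilde W_\beta}\cosh(y)\,d\mu'_N$, where $N_1/D_1$ and $N_2/D_2$ denote the two ratios and $D_2=2\int_{\tilde W_\beta}\cosh(y)\,d\mu'_N$. Since the error terms scale \emph{with} the denominator, no absolute lower bound on $\mu'_N(\tilde W_\beta)$ is required. Indeed, writing
\[
\frac{N_1}{D_1}-\frac{N_2}{D_2}=\frac{(N_1-N_2)D_2-N_2(D_1-D_2)}{D_1D_2},
\]
and using $|N_2|\le D_2$ and $D_1\ge e^{-\Delta_{\max}}D_2$ with $\Delta_{\max}=\max_\rho|\Delta(\rho)|$, one obtains
\[
\left|\frac{N_1}{D_1}-\frac{N_2}{D_2}\right|\le\frac{|N_1-N_2|+|D_1-D_2|}{D_1}\le C\,N^{-1/2}\,e^{\Delta_{\max}},
\]
which is $O(N^{-1/2})$ on the high-probability event $\{\Delta_{\max}=O(N^{-1/2})\}$ furnished by Lemma \ref{lem:decomposition-lemma}. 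This recovers the paper's clean $O(1/\sqrt{N})$ bound and removes the extra dependence on $L$ and $\eta$ (and the additional appeal to the localization lemma) from your final estimate. Your argument is nevertheless valid as written; this is simply a streamlining.
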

\begin{lem}
\label{lem:tanh-to-tanhbar}For every $\alpha\in\N$, 
\begin{equation}
\abs{\tanh\left(\left\langle y\right\rangle _{\alpha,N}-(\xi'(1)-\xi'(q_{*}))\left\langle \sigma_{1}\right\rangle _{\alpha,N}\right)-\tanh\left({\left\langle y\right\rangle}^\intercal_{\pi_N(\alpha),N}-(\xi'(1)-\xi'(q_{*})){\left\langle \sigma_{1}\right\rangle}^{\intercal}_{\pi_N(\alpha),N}\right)}\to0\label{eq:tanh-difference}
\end{equation}
in probability as $N\to \infty$.
\end{lem}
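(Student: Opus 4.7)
The plan is to exploit that $\tanh$ is $1$-Lipschitz to bound the quantity inside \eqref{eq:tanh-difference} by
\begin{equation*}
|\langle y \rangle_{\alpha,N} - \langle y \rangle^\intercal_{\pi_N(\alpha),N}| + (\xi'(1)-\xi'(q_*)) \, |\langle \sigma_1 \rangle_{\alpha,N} - \langle \sigma_1 \rangle^\intercal_{\pi_N(\alpha),N}|.
\end{equation*}
The second summand converges to $0$ in probability by Lemma \ref{lem:s-to-sbar}, so it suffices to prove that $|\langle y\rangle_{\alpha,N} - \langle y\rangle^\intercal_{\pi_N(\alpha),N}| \to 0$ in probability. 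I would do this in two steps: first replace the $\mu_N$-average over $C_\alpha$ by the $\mu_N$-average over $W_\alpha$, then identify the latter with the tilted average over $\tilde W_{\pi_N(\alpha)}$ under $(\mu_N')^\intercal$.

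For the first step, Corollary \ref{cor:quasi} gives $\mu_N(C_\alpha \Delta W_\alpha) \to 0$ in probability, while Lemma \ref{lem:non-empty-first-n} gives $\mu_N(C_\alpha), \mu_N(W_\alpha) \geq \delta$ with probability at least $1 - f_{1,n}(\delta)$. Splitting in the usual way one obtains
\begin{equation*}
|\langle y\rangle_{\alpha,N} - \mu_N(y \mid W_\alpha)| \leq \frac{1}{\delta}\int_{C_\alpha \Delta W_\alpha} |y| \, d\mu_N + \frac{\mu_N(C_\alpha \Delta W_\alpha)}{\delta} \, |\mu_N(y \mid W_\alpha)|,
\end{equation*}
and then Cauchy--Schwarz together with Lemma \ref{lem:(Localization)} (which controls $\int y^2 \, d\mu_N$ in probability, uniformly in $N$) lets both terms vanish in probability; taking $\delta \to 0$ at the end handles the low-probability event $\{\mu_N(C_\alpha) < \delta\}$.

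For the second step, I would plug the decomposition $H_N(\sigma) = \tilde H_N(\rho(\sigma)) + \sigma_1 y(\rho(\sigma)) + r_N(\sigma_1, \rho(\sigma))$ from \eqref{eq:ll} into $\mu_N(y \mid \Sigma_1 \times \tilde W_{\pi_N(\alpha)})$, sum out $\sigma_1 \in \{\pm 1\}$, and replace $r_N(\sigma_1, \sigma')$ by $r_N(1, \sigma')$ in both numerator and denominator. That replacement is valid up to a multiplicative factor $e^{\pm \Delta}$ with $\Delta = 2\max_{\sigma'} |r_N(1, \sigma') - r_N(-1, \sigma')|$, which is $o_N(1)$ with high probability by Lemma \ref{lem:decomposition-lemma}; this is precisely the input used in the proof of Lemma \ref{lem:TL}. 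After this replacement the ratio collapses to
\begin{equation*}
\frac{\int_{\tilde W_{\pi_N(\alpha)}} y(\sigma') \cosh(y(\sigma')) \, d\mu_N'}{\int_{\tilde W_{\pi_N(\alpha)}} \cosh(y(\sigma')) \, d\mu_N'} = \langle y \rangle^\intercal_{\pi_N(\alpha),N},
\end{equation*}
and the residual error is $(e^{\Delta} - e^{-\Delta})$ times a quantity bounded in probability, hence vanishes.

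The main obstacle is the unboundedness of $y$, a centered Gaussian of variance $\xi'(1)$ at each point. This forces me to route every approximation through Lemma \ref{lem:(Localization)}: I need $\int y^2 \, d\mu_N$ bounded in probability to convert the set-theoretic approximation $\mu_N(C_\alpha \Delta W_\alpha) \to 0$ into an approximation of averages, and I need $|\langle y\rangle^\intercal_{\pi_N(\alpha),N}|$ bounded in probability (again via Lemma \ref{lem:(Localization)}) to turn the multiplicative error $e^{\pm \Delta}$ into an additive error that vanishes. Without such uniform moment control, neither of the two substitutions would suffice to drive the error to $0$.
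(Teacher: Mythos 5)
Your proposal is correct and follows essentially the same route as the paper's proof: reduce via the Lipschitz property of $\tanh$ and Lemma \ref{lem:s-to-sbar} to $\langle y\rangle_{\alpha,N}-\langle y\rangle^\intercal_{\pi_N(\alpha),N}\to 0$, interpolate through $\mu_N(y\mid W_\alpha)$, control the first gap with essential uniqueness (Corollary \ref{cor:quasi}) plus the $L^2$ moment bound on $y$ from Lemma \ref{lem:(Localization)}, and control the second gap with the tilting argument. The one place you are slightly more explicit than the paper is the second step: you re-derive the tilt for the signed integral of $y$ by summing out $\sigma_1$ in the Hamiltonian decomposition, which is a reasonable precaution since Lemma \ref{lem:TL} as stated bounds masses of sets rather than signed integrals; the paper just cites the Tilting Lemma for the bound $|\tilde{\gibbs{y}}_{\alpha,N}-\gibbs{y}^\intercal_{\pi_N(\alpha),N}|\leq\frac{C'}{\sqrt{N}}\norm{y}_{L^2(\mu')}$ and leaves this extension implicit.
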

\begin{proof}[\textbf{{Proof of Theorem \ref{thm:TAP-finite-N}}}]
 By the above two lemmas, it suffices to prove \eqref{eq:TAPmixed}
with ${\left\langle \cdot\right\rangle }^\intercal_{\pi_N(\alpha),N}$ replacing $\langle \cdot \rangle_{\alpha,N}$.  

Now, let $Y_{\alpha}^{N}={\left\langle \sigma_{1}\right\rangle }^\intercal_{\pi_N(\alpha),N}-\tanh\left({\left\langle y\right\rangle }^\intercal_{\pi_N(\alpha),N}-(\xi'(1)-\xi'(q_{*})){\left\langle \sigma_{1}\right\rangle }^\intercal_{\pi_N(\alpha),N}\right)$.
Note that $Y_{\alpha}^{N}$ can be written as $Y_{\alpha}^{N} = X_{\pi(\alpha)}^{N}$ where 
\[
X_{\alpha}^{N}:={\left\langle \sigma_{1}\right\rangle }^\intercal_{\alpha,N}-\tanh\left({\left\langle y\right\rangle }^\intercal_{\alpha,N}-(\xi'(1)-\xi'(q_{*})){\left\langle \sigma_{1}\right\rangle }^\intercal_{\alpha,N}\right),
\]
By Lemma \ref{lem:rearrangement-of-sequences} and Lemma \ref{lem:tightness}, it thus suffices to prove convergence of $X_{\alpha}^{N}$ to zero. 

Observe that for $X_\alpha^{N}$, this is a statement about a cavity coordinate with the local field independent of the measure $\mu'$. 
 Indeed, the Hamiltonian $H'$ satisfies \prettyref{eq:ham-for-spin-section}, and $y$ satisfies \eqref{eq:y-defn-indept}. Thus, $X_{\alpha}^{N}$ goes to zero in probability by Corollary \ref{cor:Cavity-TAP-finite-N} and Theorem \ref{thm:TAP-finite-N} follows.
\end{proof}

We now turn to the proofs of the lemmas.
Set 
\[
\tilde{\left\langle \cdot\right\rangle }_{\alpha,N}=\mu_{N}\left(\cdot\vert W_{\alpha}\right).
\]

\begin{proof}[\textbf{{Proof of \prettyref{lem:s-to-sbar}}}]
We begin by observing that 
\begin{align*}
\abs{\left\langle \sigma_{1}\right\rangle _{\alpha,N}-\tilde{\left\langle \sigma_{1}\right\rangle }_{\alpha,N}} & =\abs{\fint_{C_{\alpha}}\sigma_{1}d\mu_{N}-\fint_{W_{\alpha}}\sigma_{1}d\mu_{N}}\leq\frac{2\mu_{N}(W_{\alpha}\Delta C_{\alpha})}{\mu_{N} (C_{\alpha})}
\end{align*}
on the event that $W_{\alpha}$ and $C_{\alpha}$ both have positive
mass. Since $\mu_{N}\left(W_{\alpha}\Delta C_{\alpha}\right)\to0$
in probability by the essentially uniqueness theorem (Corollary \ref{cor:quasi}) and $\mu_{N}(C_{\alpha})$ and $\mu_{N}(W_{\alpha})$ converge
in law to a random variable that is almost surely positive, this goes to zero in probability.
Then note that by the tilting lemma, 
\[
\abs{\tilde{\gibbs{\sigma_{1}}}_{\alpha,N}-\gibbs{\sigma_{1}}_{\pi_N(\alpha)}^\intercal} \leq \frac{C'}{\sqrt{N}}
\]
with high probability, so that this too goes to zero in probability. The result then follows by the
triangle inequality.
\end{proof}

\begin{proof}[\textbf{{Proof of \prettyref{lem:tanh-to-tanhbar}}}]
 As $\tanh(x)$ is $1$-Lipschitz, and we know from \prettyref{lem:s-to-sbar}
that $\left\langle \sigma_{1}\right\rangle _{\alpha}-\tilde{\left\langle \sigma_{1}\right\rangle }^\intercal_{\pi_N(\alpha)}\to0$
in probability, it suffices to show that 
\[
\left\langle y\right\rangle _{\alpha,N}-\left\langle y\right\rangle^\intercal_{\pi_N(\alpha),N}\to0
\]
in probability. Observe that  
\[
\abs{\fint_{C_{\alpha}}yd\mu_N-\fint_{W_{\alpha}}yd\mu_N}\leq\frac{1}{\mu_{N} (C_{\alpha})}\norm{y}_{L^{2}(\mu)}\sqrt{\mu_{N}(W_{\alpha}\Delta C_{\alpha})}\left(1+\frac{1}{\sqrt{\mu_{N}(W_{\alpha})}}\right),
\]
and that with probability tending to 1, $(1+\frac{1}{\sqrt{\mu_{N}(W_{\alpha})}})\vee\frac{1}{\mu_{N}(C_{\alpha})}$
will be finite. Furthermore, $\mu_{N}(W_{\alpha}\Delta C_{\alpha})\to0$
in probability by the quasi-uniqueness theorem (Theorem \ref{thm:nesting}), and $\E\norm{y}_{2}\leq C$
uniformly in $N$ by item 3 of Lemma \ref{lem:(Localization)}. Thus this tends to zero in probability as before.
Similarly 
\[
\abs{\tilde{\gibbs{y}}_{\alpha,N}-\gibbs{y}_{\pi_N(\alpha)}^\intercal} \leq \frac{C'}{\sqrt{N}}\norm{y}_{L^2(\mu')}
\]
which goes to zero in probability by the same argument.
\end{proof}

\appendices
\section{Appendix\label{sec:Appendix}}

\subsection{The clusters $C_{\alpha,N}$ and approximate ultrametricity}\label{app:AU}

In this short section we summarize the properties of the clusters $C_{\alpha,N}$ used to construct the measures $\langle\cdot\rangle_{\alpha,N}$.
These properties are described in the following theorem, which is a rephrasing of the main results in \cite{Jag14}, specifically as in Section 9, Proposition 9.5-6 and Corollary 9.7 of that paper.

\begin{thm}[ \cite{Jag14}]\label{thm:pure-state-AU}
Assume that $\zeta(\{q_{*}\})>0$. Then there are sequences $q_{N} \uparrow q_{*}$, $\epsilon_{N}, a_{N}, b_{N}$ all converging monotonically to $0$, and $m_N\to\infty$, such that
$m_N\cdot b_N^{\gamma }\to0$ for some $\gamma\leq1$,  $q_{N} + a_{N} < q_{*}$ and 
\[
\zeta_{N}[q_{N}+a_{N}, 1] \geq \zeta(\{ q_{*}\}) -b_{N}
\]
for $N$ sufficiently large and such that with probability  $1-o_N(1)$, 
there exist disjoint random sets $\{C_{\alpha,N}\}_{\alpha \in \mathbb N}$ of $\Sigma_{N}$ :
\begin{enumerate}
\item The collection $C_{\alpha,N}$ exhaust the set $\Sigma_{N}$: $$\sum_{\alpha} \mu_{N}(C_{\alpha,N}) \geq 1 - \epsilon_{N}.$$
\item For any $\alpha$, points are uniformly close: $$\mu_{N}^{\tensor 2}\big(\sigma^{1},\sigma^{2} \in C_{\alpha,N}: R(\sigma^{1},\sigma^{2})\leq q_{N}-a_{N}\big)\leq b_{N}.$$
\item For any $\alpha \neq \beta$, $$\mu_{N}^{\tensor 2}\big(\sigma^{1}\in C_{\alpha,N},\sigma^{2} \in C_{\beta,N}: R(\sigma^{1},\sigma^{2})\geq q_{N}+a_{N}\big)\leq b_{N}.$$

\item Uniformly in $\alpha$ we have,
$$\int_{C_{\alpha,N}^{\tensor 2}} |R_{12}-q_{*}| d\mu_{N}^{2} < o_N(1).$$

\item The weights $(\mu_{N}(C_{\alpha,N}))$ are labeled in decreasing order of mass, and converge to the weights of a Poisson-Dirichlet process of parameter $1-\zeta(\{q_{*}\})$.

\end{enumerate}
\end{thm}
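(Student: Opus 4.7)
The plan is to deduce this theorem from the asymptotic structural theory of Gibbs measures of generic models. Because $\xi$ is generic, the sequence $\{\mu_N\}$ satisfies the approximate Ghirlanda-Guerra identities \cite{PanchGhir10}. Combined with the convergence $\zeta_N \to \zeta$, any weak subsequential limit of the overlap array $(R_{ij})$ under $\E \mu_N^{\tensor\infty}$ satisfies the Ghirlanda-Guerra identities exactly, and by Panchenko's ultrametricity theorem \cite{PanchUlt13} any such limit is ultrametric. The joint law of the limiting weights and overlap array is then described by a Ruelle probability cascade \cite{PanchSKBook}. The hypothesis $\zeta(\{q_*\}) > 0$ translates into the statement that the top level of this cascade carries nontrivial mass and that its weights form a Poisson-Dirichlet process of parameter $1 - \zeta(\{q_*\})$.

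Given this limiting picture, I would construct $\{C_{\alpha,N}\}$ as follows. First, choose $q_N \uparrow q_*$ and $a_N \downarrow 0$ slowly enough that $\zeta_N[q_N + a_N, 1] \to \zeta(\{q_*\})$ and $q_N + a_N < q_*$; this is possible by the weak convergence of $\zeta_N$ together with the atom at $q_*$. Next, I would declare two configurations equivalent if they lie in a common connected component of the graph on $\Sigma_N$ with edges joining pairs at overlap at least $q_N + a_N$, as in \cite[Sec.~9]{Jag14}; the clusters $C_{\alpha,N}$ are then a relabeling of the large equivalence classes, ordered by $\mu_N$-mass, after discarding the ``dust'' of configurations outside this construction.

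The five items are then verified by transferring the ultrametric structure of the limit back to finite $N$. Items (2) and (3) follow from concentration of the overlap distribution around $\zeta_N$ combined with the choice of scales; item (4) follows from item (2) and the trivial bound $|R_{12}| \leq 1$, together with the concentration of within-cluster overlaps at $q_*$ coming from approximate ultrametricity; item (1) reduces to showing that only $\epsilon_N$ mass lies outside the union of clusters, which follows from the convergence $\zeta_N \to \zeta$; and item (5) follows from the convergence of the weights at the top level of the cascade to $PD(1 - \zeta(\{q_*\}))$, together with a Portmanteau-type argument analogous to the one used in Lemma \ref{lem:convergencetoPD}.

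The main obstacle is quantitative: one must choose the scales $q_N, a_N, b_N, \epsilon_N, m_N$ simultaneously so that $m_N \to \infty$ and $m_N b_N^\gamma \to 0$ for some $\gamma \leq 1$. This requires balancing the speed at which the approximate Ghirlanda-Guerra identities become exact (and hence at which ultrametricity emerges) against the number of clusters that can carry non-negligible mass (governed by the Poisson-Dirichlet tail). The balance is achieved by a diagonalization along a suitably chosen subsequence of $N$, using the $L^1$-type bounds on Ghirlanda-Guerra violations that come with explicit quantitative error terms.
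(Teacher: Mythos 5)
The paper does not actually prove Theorem~\ref{thm:pure-state-AU}: it is imported directly from \cite{Jag14} (Section~9, Propositions~9.5--9.6 and Corollary~9.7), with only a one-line note about adjusting a constant in Lemma~5.2 of \cite{Jag14} to force $m_N b_N^\gamma \to 0$. So you are reconstructing a proof that the present paper deliberately omits, and the fair benchmark is \cite{Jag14} itself. Your high-level plan --- generic models give exact Ghirlanda--Guerra identities in the limit, hence ultrametricity by Panchenko, hence a Ruelle cascade whose top level carries Poisson--Dirichlet weights of parameter $1-\zeta(\{q_*\})$, and then transfer back to finite $N$ with slowly varying scales --- is the correct skeleton.

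Two places need repair. The construction you propose, taking connected components of the graph on $\Sigma_N$ with edges at overlap $\geq q_N + a_N$, is vulnerable to chaining: at finite $N$ ultrametricity only holds approximately on typical Gibbs samples, so a path $\sigma^1,\ldots,\sigma^k$ with each consecutive pair at high overlap may connect configurations whose direct overlap is small, collapsing distinct clusters into one and wrecking item~(2). The construction in \cite{Jag14} (like Talagrand's in \cite{Tal09}) avoids this by drawing a large i.i.d.\ sample from $\mu_N$, partitioning only those sampled points into well-separated overlap groups using approximate ultrametricity, and defining $C_{\alpha,N}$ as overlap-balls around the sampled representatives; this works precisely because the combinatorics of the finite sample can be controlled in a way that arbitrary chains through $\Sigma_N$ cannot. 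Second, your closing ``diagonalization along a suitably chosen subsequence of $N$'' is a red flag: the theorem is a statement about the full sequence (``for $N$ sufficiently large, with probability $1-o_N(1)$''), so the diagonalization must be over the choice of parameter sequences $q_N, a_N, b_N, \epsilon_N, m_N$ (sending them to their limits slowly enough that each error term stays dominated), not over a subsequence of $N$. As a smaller point, your derivation of item~(4) is circular as written (``concentration of within-cluster overlaps at $q_*$'' \emph{is} item~(4)); the correct derivation, indicated in the present paper's proof of Lemma~\ref{lem:Wnaregood}, combines item~(2), item~(3), the lower bound $\zeta_N[q_N+a_N,1] \geq \zeta(\{q_*\}) - b_N$, and the fact that $\zeta_N(q_*+\epsilon,1] \to 0$ for each $\epsilon > 0$.
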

\noindent \textbf{Note:} We may always take $\alpha_{N-1}\geq N^{-1}$ in the above by monotonicity. That we can,
take $m_N\cdot b_N^{\gamma}\to0$, follows by adding a constant to the definition of $n_0$ in Lemma 5.2 of \cite{Jag14}.

\subsection{Tail bounds for some Gibbs averages}\label{app:localization}

\begin{lem}[Localization Lemma]
\label{lem:(Localization)} Recall $\tilde K(\mu')$ from \eqref{eq:K-tilde}, $y_{\alpha,N}$ from Section \ref{sec:cavity} and $y_{N}$ from \eqref{spinlocalfield}. For any $L >0$ we have the following estimates.

\begin{enumerate}
\item For any $\alpha \in \N$, 
\[
P(\abs{y_{\alpha,N}}>L)\leq C_{1}(\xi')\cdot e^{-C_{2}(\xi')L^{2}}.
\]
\item For any $\alpha \in \mathbb N$,
\[ P\left (\int_{X_{\alpha}} \cosh (y_{\alpha,N}) d\nu_{N} >L\right) \leq C(\xi')/L. \]

\item We have that 
\[P(\tilde{K}(\mu')\geq L)\leq C(\xi')/L.\]
\item We have that 
\[
\E\left(\int_{\Sigma_{N}}y^{2}_{N}d\mu_{N}\right)^{1/2}\leq C(\xi').
\]
\end{enumerate}
\end{lem}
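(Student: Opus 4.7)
I would prove the four items independently, but items (1)--(3) share a common engine that item (4) also ultimately reduces to. The key observation is that the local field process $y$ (respectively $y_N$) is independent of the underlying Gibbs measure $\nu_N$ (respectively $\mu'$), so Gaussian moments of $y$ may be evaluated pointwise after Fubini. Combined with the elementary bound $\cosh \geq 1$, which is used to dispose of normalizing denominators, this reduces each estimate to a scalar Gaussian calculation.

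For (1), I would unfold the tilted density from \eqref{eq:nu-alpha-T} to write
\[
\E[e^{\lambda y_{\alpha,N}}] = \E\left[\frac{\int_{X_\alpha} e^{\lambda y(\sigma)}\cosh(y(\sigma))\,d\nu_N}{\int_{X_\alpha}\cosh(y(\sigma))\,d\nu_N}\right] \leq \E\left[\frac{1}{\nu_N(X_\alpha)}\int_{X_\alpha} e^{\lambda y(\sigma)}\cosh(y(\sigma))\,d\nu_N\right],
\]
lower-bounding the denominator by $\nu_N(X_\alpha)$ via $\cosh\geq 1$. By Fubini and independence of $y$ from $\nu_N$, the right-hand side is controlled by $\sup_{\sigma}\E[e^{\lambda y(\sigma)}\cosh(y(\sigma))]$, which is a constant depending only on $\lambda$ and $\xi'(1)$; the $\nu_N(X_\alpha)$ factors cancel, and Chernoff's inequality yields the sub-Gaussian tail.

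For (2), Markov's inequality reduces the claim to bounding $\E\int_{X_\alpha}\cosh(y)d\nu_N$, which by independence of $y$ from $\nu_N$ equals $\int_{X_\alpha}\E\cosh(y(\sigma))\,d\E\nu_N\leq e^{\xi'(1)/2}(1+o_N(1))$. For (3), Jensen yields $\E\tilde K(\mu') \leq (\E\int\cosh(2y)d\mu')^{1/2}$, and the inner expectation is bounded exactly as in (2); Markov's inequality then finishes the job.

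Item (4) is the main obstacle, since $y_N$ is not independent of $\mu_N$. To reduce to the independent setting, I would invoke the cavity decomposition \eqref{eq:ll} and sum the Boltzmann weight over $\sigma_1\in\{-1,1\}$. This expresses $\mu_N$ as a $\cosh(y_N)$-tilt of the auxiliary measure $\mu'$ associated with $H_N'=\tilde H_N+r_N(1,\cdot)$ on $\Sigma_{N-1}$ (which is independent of $y_N$), up to a multiplicative factor $e^{O(\Delta_N)}$ where $\Delta_N:=\sup_{\rho}|r_N(1,\rho)-r_N(-1,\rho)|$. This gives
\[
\int y_N^2\,d\mu_N \leq e^{C\Delta_N}\frac{\langle y_N^2\cosh(y_N)\rangle'}{\langle \cosh(y_N)\rangle'} \leq e^{C\Delta_N}\langle y_N^2\cosh(y_N)\rangle',
\]
where the denominator is bounded below by $1$ via $\cosh\geq 1$. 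Jensen's inequality followed by Cauchy--Schwarz, combined with Fubini and the independence of $y_N$ and $\mu'$, reduces $\E(\int y_N^2 d\mu_N)^{1/2}$ to $(\E e^{2C\Delta_N})^{1/2}$ times pointwise Gaussian moments of $y_N(\rho)$, both bounded uniformly in $N$. The one ingredient specific to this item is the bound $\E e^{2C\Delta_N}\leq C(\xi)$, which follows from standard Gaussian concentration applied to $r_N$, whose variance is $O(1/N)$ by Lemma~\ref{lem:decomposition-lemma}.
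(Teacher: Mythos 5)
Your proposal is correct and follows essentially the same route as the paper: items (1)--(3) are reduced to pointwise Gaussian moment calculations via Fubini and independence of the local field from the Gibbs measure, with the $\cosh\geq 1$ bound used to dispose of normalizing denominators and Markov/Chernoff/Jensen closing the estimates; item (4) uses the cavity decomposition \eqref{eq:ll}, the tilting representation of $\mu_N$ as a $\cosh(y_N)$-tilt of $\mu'$ up to the $e^{O(\Delta_N)}$ factor, Cauchy--Schwarz, and the exponential moment bound on $\Delta_N$ from Lemma~\ref{lem:decomposition-lemma}. The only minor omission is the paper's explicit handling of the event $\{X_\alpha=\emptyset\}$ (which contributes an additive $P(X_\alpha=\emptyset)\leq1$ to the MGF in item (1)), but this is a bookkeeping detail and does not affect the argument.
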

\begin{proof} In the following we will drop the index $\alpha$ of our notation without any loss.
To see the first item, note that  $y_{\alpha,N}$ has finite moment generating
function. Fix $\lambda \geq 1.$ We have
\begin{align}\label{eq:daskioda}
\E e^{\lambda y_{N}} & =\E\left[\frac{\int_{X_{\alpha}}e^{\lambda y_{N}(\sigma)}\cosh(y_{N}(\sigma))d\nu_{N}}{\int_{X_{\alpha}}2\cosh(y_{N}(\sigma))d\nu_{N}}\right]+P(X_{\alpha}=\emptyset) \nonumber \\
 & \leq\E \left[\fint_{X_{\alpha}}e^{\lambda y_{N}(\sigma)}\cosh(y_{N}(\sigma))d\nu_{N} \right]+1 \nonumber\\
 & \leq\E\fint_{X_{\alpha}}\E(\exp(\lambda y_{N})\cosh(y_{N})\vert\nu_{N})d\nu_{N}+1 \nonumber\\
 & =\frac{1}{2}\left(e^{(1+\lambda)^{2}\xi'(1)}\right)+1.
\end{align}
Then, by Markov's inequality, we have 
\[
P(y_{N}\geq L)\leq \E e^{\lambda y_{N} - \lambda L} \leq  \E e^{(1+\lambda)^{2}\xi'(1) - \lambda L}\leq C_{1}(\xi')\cdot e^{-C_{2}(\xi')L^{2}},
\]
for $L$ sufficiently large by choosing $\lambda = L/2$, for instance. Increasing the value of $C_{1}(\xi')$ if necessary we obtain the result for all $L>0$. 
Similarly for $-y_{N}$.

The second item holds by Markov's inequality, conditioning on $\nu_{N}$ and using the Gaussian bound of item 1. For the third item, note that 
using Lemma \ref{lem:decomposition-lemma}, conditioning on $\mu'$ and letting $Z$ be a Gaussian random variable with variance $\xi'(1)$, we have
$$ P(\tilde K(\mu')\geq L)\leq C(\xi') \mathbb E \left[ e^{-4 Z} \cosh Z\right ] \mu'_{N}(\Sigma_{N}) \leq L^{-1}C(\xi'),$$
as desired.

We prove the last item as follows. To see this observe that it suffices to bound $\E\int y^{2}d\mu$.
To estimate this, observe that if $\Delta=\max\abs{r(1,\sigma)-r(-1,\sigma)}$,
then
\begin{align*}
\E\int y^{2}d\mu & \leq\E e^{2\Delta}\frac{\int_{\Sigma_{N-1}}y^{2}\cosh(y)d\mu'}{\int_{\Sigma_{N-1}}\cosh(y)d\mu'}\\
 & \leq\left(\E e^{4\Delta}\right)^{1/2}\left(\E\int y^{4}\cosh(y)d\mu'\right)^{1/2},\\
\end{align*}
where in the last inequality we use Cauchy-Schwarz and the fact that
$\cosh(x)\geq1$. Observe that the first term is bounded by \eqref{eq:Delta}.
Since $y$ is independent of $\mu'$ , we can integrate in $y$ to
find that the second term is also uniformly bounded.
 
\end{proof}

\subsection{Decomposition and regularity of mixed $p$-spin Hamiltonians}\label{app:decomposition}

In this section, we present some basic properties of mixed $p$-spin Hamiltonians.
Recall that for $\sigma = (\sigma_{1},\ldots, \sigma_{N}) \in \Sigma_{N}$, $\rho(\sigma) = (\sigma_{2}, \ldots, \sigma_{N}) \in \Sigma_{N-1}$. Now observe that for any mixed $p$-spin glass model, the Hamiltonian
has the following decomposition:
\begin{equation}
H_{N}(\sigma)=\tilde H_{N}(\rho(\sigma))+\sigma_{1}y_{N}(\rho(\sigma))+r_{N}(\sigma_{1},\rho(\sigma)),
\end{equation}
where the processes come from the following lemma.
\begin{lem}
\label{lem:decomposition-lemma} There exist centered Gaussian processes $\tilde H_{N},y_{N},r_{N}$
such that \eqref{eq:ll} holds and 
\begin{align*}
\mathbb{E}\tilde H_{N}(\sigma^{1})\tilde H_{N}(\sigma^{2})= & N\xi\left (\frac{{N-1}}{N}R_{12}\right),\\
\mathbb{E}y_{N}(\sigma^{1})y_{N}(\sigma^{2})= & \xi'(R_{12})+o_{N}(1),\\
\mathbb{E}r_{N}(\sigma^{1})r_{N}(\sigma^{2})= & O(N^{-1}).\\
\end{align*}
Furthermore, there exist positive constant $C_{1}$ and $C_{2}$ so
that with probability at least $1-e^{-C_{1}N},$
\[
\max_{\sigma\in\Sigma_{N-1}}|r_{N}(1,\sigma)-r_{N}(-1,\sigma)|\leq\frac{{C_{2}}}{\sqrt{N}},
\]
and a positive constant $C_{3}$ so that 
\begin{equation}\label{eq:Delta}
 \mathbb E \exp\bigg(2 \max_{\sigma \in \Sigma_{N-1}} |r_{N}(1,\sigma) - r_{N}(-1,\sigma)|\bigg) \leq C_{3}.
\end{equation}

\end{lem}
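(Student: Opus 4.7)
The plan is to realize $H_N$ via its explicit representation $H_N(\sigma)=\sum_{p\geq 2} \tfrac{\beta_p}{N^{(p-1)/2}} \sum_{i_1,\ldots,i_p\in[N]} g_{i_1\ldots i_p}\sigma_{i_1}\cdots\sigma_{i_p}$ with i.i.d.\ standard Gaussians $(g_{i_1\ldots i_p})$, and to decompose the defining sum according to how many indices equal $1$. For each $p$ and each subset $S\subseteq[p]$, the partial sum over tuples with $i_j=1$ for $j\in S$ and $i_j\in\{2,\ldots,N\}$ for $j\notin S$ factors as $\sigma_1^{|S|}$ times a Gaussian process on $\Sigma_{N-1}$. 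I will then take $\tilde H_N(\rho)$ to be the contribution from $S=\emptyset$, $\sigma_1 y_N(\rho)$ to be the contribution from $|S|=1$ after pulling out one $\sigma_1$, and $r_N(\sigma_1,\rho)$ to be the sum over $|S|\geq 2$. The identity $H_N=\tilde H_N+\sigma_1 y_N+r_N$ is then automatic, and the three pieces are built from disjoint families of $g$'s and hence are mutually independent.

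The three covariance identities reduce to direct counting. The sum for $\tilde H_N$ over $i_j\geq 2$ collapses to $\sum_p \beta_p^2 N^{-(p-1)}\bigl(\sum_{i\geq 2}\rho^1_i\rho^2_i\bigr)^p=N\xi\bigl(\tfrac{N-1}{N}R_{12}\bigr)$. For $y_N$, one first checks that distinct tuples $(i_2,\ldots,i_p)$ with all entries $\geq 2$ yield independent $J_{i_2\ldots i_p}$ with $\E J^2=p$ (since each position of the $1$ gives a distinct full index and different tuples force different multisets); summing then gives $\sum_p \beta_p^2 p \bigl(\tfrac{N-1}{N}\bigr)^{p-1}R_{12}^{p-1}=\xi'\bigl(\tfrac{N-1}{N}R_{12}\bigr)=\xi'(R_{12})+O(1/N)$ uniformly in $R_{12}$, by the regularity of $\xi$. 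For $r_N$ the analogous bookkeeping yields
\[
\E r_N(\epsilon,\rho^1) r_N(\epsilon',\rho^2) = \sum_{p\geq 2}\frac{\beta_p^2}{N^{p-1}}\sum_{k=2}^{p}\binom{p}{k}\epsilon^k (\epsilon')^k \Bigl(\sum_{i\geq 2}\rho^1_i\rho^2_i\Bigr)^{p-k},
\]
each of whose $k$-terms carries a prefactor $N^{-(k-1)}$ and is therefore $O(1/N)$ uniformly.

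The main content is the tail bound on $\Delta(\rho):=r_N(1,\rho)-r_N(-1,\rho)$. The decisive observation is a parity cancellation: because $r_N$ depends on $\epsilon=\sigma_1$ only through $\epsilon^k$ with $k\geq 2$ and $1^k-(-1)^k$ vanishes whenever $k$ is even, only odd $k\geq 3$ contribute to $\Delta$. Repeating the covariance computation with this restriction yields
\[
\E\Delta(\rho)^2 = 4\sum_{p\geq 3}\frac{\beta_p^2}{N^{p-1}}\sum_{\substack{k\geq 3\\k\text{ odd}}}\binom{p}{k}(N-1)^{p-k},
\]
whose leading $k=3$ contribution is of order $1/N^2$. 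Since $(\Delta(\rho))_{\rho\in\Sigma_{N-1}}$ is a centered Gaussian process with $\sigma_*^2:=\max_\rho \mathrm{Var}(\Delta(\rho))=O(1/N^2)$, the standard bound on the maximum of $2^{N-1}$ sub-Gaussians gives $\E\max_\rho|\Delta(\rho)|\leq C\sigma_*\sqrt{\log|\Sigma_{N-1}|}=O(1/\sqrt N)$, and the Borell-TIS concentration inequality upgrades this to $\prob\bigl(\max_\rho|\Delta(\rho)|\geq C_2/\sqrt N\bigr)\leq e^{-C_1 N}$ for appropriate constants.

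The exponential-moment estimate \eqref{eq:Delta} follows from the same two inputs. Writing $M:=\max_\rho|\Delta(\rho)|$, Borell-TIS asserts that $M-\E M$ is sub-Gaussian with variance proxy $\sigma_*^2=O(1/N^2)$, so $\E e^{2(M-\E M)}\leq e^{2\sigma_*^2}$ is uniformly bounded, and factoring $\E e^{2M}=e^{2\E M}\cdot\E e^{2(M-\E M)}\leq e^{O(1/\sqrt N)}\cdot e^{O(1/N^2)}$ gives a uniform bound $C_3$. The principal obstacle is the combinatorial bookkeeping: one must keep careful track of how the factors of $1/N$ accumulate across $p$ and $k$, and in particular notice the parity cancellation that kills the $k=2$ contribution in the variance of $\Delta$ and thereby gains the extra factor of $N^{-1}$ needed to make $\max_\rho|\Delta(\rho)|$ actually vanish rather than merely remain bounded.
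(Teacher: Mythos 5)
Your proof follows the same route as the paper's: decompose $H_N$ by the number of indices equal to $1$, compute the three covariances by direct counting, observe the parity cancellation so that only odd $k\geq 3$ contribute to $\mathrm{Var}\bigl(r_N(1,\cdot)-r_N(-1,\cdot)\bigr)$ (yielding $O(N^{-2})$), and finish with the sub-Gaussian maximum bound together with Borell--TIS for both the tail estimate and the exponential moment. One small quibble: your parenthetical justification for the independence of the $J_{i_2\ldots i_p}$ (``different tuples force different multisets'') is not the right reason, since distinct ordered tuples can share a multiset; the correct observation is that each $g_{j_1\ldots j_p}$ having exactly one index equal to $1$ appears in exactly one $J$ (the one indexed by the tuple obtained by deleting that unique $1$), so distinct $J$'s are built from disjoint families of $g$'s --- but the conclusion you draw and the resulting covariance formula are correct.
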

\begin{proof} The lemma is a standard computation on Gaussian processes. To simplify the exposition we will consider the pure $p$-spin model. The general case follows by linearity. Here, we set $$\tilde H_{N}(\rho(\sigma)) = N^{-\frac{p-1}{2}}  \sum_{2 \leq i_{1},\ldots, i_{p} \leq N} g_{i_{1}\ldots i_{p}} \sigma_{i_{1}}\ldots \sigma_{i_{p}},$$
$$y_{N}(\rho(\sigma)) = N^{-\frac{p-1}{2}}  \sum_{k=1}^{p}\sum_{\stackrel{2 \leq i_{1},\ldots, i_{p} \leq N}{i_{k}=1}} g_{i_{1}\ldots i_{p}}\sigma_{i_{1}}\ldots \sigma_{i_{p}}, $$
and 
$$r_{N}(\sigma_{1}, \rho(\sigma))=  N^{-\frac{p-1}{2}} \sum_{l=2}^{p} \sigma_{1}^{\ell} \sum_{2\leq i_{1}, \ldots, i_{p-\ell} \leq N} J_{i_{1}\ldots i_{p-\ell}} \sigma_{i_{1}} \ldots \sigma_{i_{p-\ell}},$$
where $J_{i_{1}\ldots i_{p-\ell}}$ are centered Gaussian random variables with variance equal to $\binom{p}{\ell}$: $J_{i_{1}\ldots i_{p-\ell}}$ is the sum of the $g_{i_{1} \ldots i_{p}}$ where the index $1$ appears exactly $\ell$ times.
Computing the variance of these three Gaussian processes give us the the first three statements of the Lemma. For the second to last  and last statement, note that for any $\sigma \in \Sigma_{N-1}$, $r(1,\sigma)-r(-1,\sigma)$ is a centered Gaussian process with variance equal to 
$$ \frac{4}{N^{p-1}} \sum_{\ell = 3, \:\ell \text{ odd}}^{p} \binom{p}{\ell} (N-1)^{p-\ell} \leq \frac{C_{p}}{N^{2}}, $$ 
for some constant $C_{p}$.
A standard application of Borell's inequality (Theorem 2.1.1 in \cite{ATbook}), the tail estimate for the maximum of a Gaussian process (Equation (2.1.4) in \cite{ATbook}) and Sudakov-Fernique inequality (Theorem 2.2.3 in \cite{ATbook}) gives us the desired result. 
\end{proof}

\bibliographystyle{plain}
\bibliography{localfields}

\begin{thebibliography}{10}

\bibitem{ATbook}
Robert Adler and Jonathan Taylor.
\newblock {\em Random fields and geometry}.
\newblock Springer monographs in mathematics, 2007.

\bibitem{ArgAiz09}
Louis-Pierre Arguin and Michael Aizenman.
\newblock On the structure of quasi-stationary competing particle systems.
\newblock {\em Ann. Probab.}, 37(3):1080--1113, 05 2009.

\bibitem{AuffChenPM15}
Antonio Auffinger and Wei-Kuo Chen.
\newblock On properties of {P}arisi measures.
\newblock {\em Probab. Theory Related Fields}, 161(3-4):817--850, 2015.

\bibitem{AuffChenSC15}
Antonio Auffinger and Wei-Kuo Chen.
\newblock The {P}arisi formula has a unique minimizer.
\newblock {\em Comm. Math. Phys.}, 335(3):1429--1444, 2015.

\bibitem{AA2018}
Antonio Auffinger and Aukosh Jagannath.
\newblock On spin distributions for generic $p$-spin models.
\newblock {\em Preprint.}, 2018.

\bibitem{Bolt14}
Erwin Bolthausen.
\newblock An iterative construction of solutions of the {TAP} equations for the
  {S}herrington-{K}irkpatrick model.
\newblock {\em Comm. Math. Phys.}, 325(1):333--366, 2014.

\bibitem{Chat10}
Sourav Chatterjee.
\newblock Spin glasses and {S}tein's method.
\newblock {\em Probab. Theory Related Fields}, 148(3-4):567--600, 2010.

\bibitem{Jag14}
Aukosh {Jagannath}.
\newblock {Approximate Ultrametricity for Random Measures and Applications to
  Spin Glasses}.
\newblock {\em Comm. Pure Appl. Math.}, To Appear.

\bibitem{PanchGhir10}
Dmitry Panchenko.
\newblock The {G}hirlanda-{G}uerra identities for mixed {$p$}-spin model.
\newblock {\em C. R. Math. Acad. Sci. Paris}, 348(3-4):189--192, 2010.

\bibitem{PanchUlt13}
Dmitry Panchenko.
\newblock The {P}arisi ultrametricity conjecture.
\newblock {\em Ann. of Math. (2)}, 177(1):383--393, 2013.

\bibitem{PanchSKBook}
Dmitry Panchenko.
\newblock {\em The Sherrington-Kirkpatrick model}.
\newblock Springer, 2013.

\bibitem{PanchHEpure15}
Dmitry Panchenko.
\newblock Hierarchical exchangeability of pure states in mean field spin glass
  models.
\newblock {\em Probab. Theory Related Fields}, 161(3-4):619--650, 2015.

\bibitem{TalBK03}
Michel Talagrand.
\newblock {\em Spin glasses: a challenge for mathematicians}, volume~46.
\newblock Springer-Verlag, Berlin, 2003.
\newblock Cavity and mean field models.

\bibitem{Tal09}
Michel Talagrand.
\newblock Construction of pure states in mean field models for spin glasses.
\newblock {\em Probability Theory and Related Fields}, 148(3-4):601--643, 2010.

\bibitem{TAP77}
David~J Thouless, Philip~W Anderson, and Robert~G Palmer.
\newblock Solution of 'solvable model of a spin glass'.
\newblock {\em Philosophical Magazine}, 35(3):593--601, 1977.

\end{thebibliography}
\end{document}